\numberwithin{equation}{section}
\numberwithin{figure}{section}
\theoremstyle{plain}
\newtheorem{thm}{Theorem}
  \theoremstyle{plain}
  \numberwithin{thm}{section}
  \newtheorem{cor}[thm]{Corollary}
  \theoremstyle{plain}
  \newtheorem{lem}[thm]{Lemma}
  \theoremstyle{remark}
  \newtheorem{rem}[thm]{Remark}
    \theoremstyle{remark}
   \theoremstyle{plain}
  \newtheorem{prop}[thm]{Proposition}
  \newtheorem{assumption}[thm]{Assumption}
  \newtheorem{notat}[thm]{Notation}
  \theoremstyle{definition}
\newtheorem{definition}[thm]{Definition}
\newcommand{\norm}[1]{\left\| #1 \right\|}
\newcommand{\mklm}[1]{\left\{ #1 \right\}}
\newcommand{\eklm}[1]{\left\langle #1 \right\rangle}
\newcommand{\imult}{\mathbin{\lrcorner}}
\renewcommand{\d}{\,d}
\newcommand{\N}{{\mathbb N}}
\newcommand{\Z}{{\mathbb Z}}
\newcommand{\C}{{\mathbb C}}
\newcommand{\R}{{\mathbb R}}
\newcommand{\D}{{\mathcal D}}
\newcommand{\F}{{\mathcal F}}
\newcommand{\J}{{ \mathcal J}}
\newcommand{\M}{{\mathscr M}}
\renewcommand{\O}{{\mathcal O}}
\newcommand{\W}{{\mathcal W}}
\newcommand{\0}{{\rm 0}}
\renewcommand{\epsilon}{\varepsilon}
\newcommand \< {\langle}
\renewcommand \> {\rangle}
\DeclareMathOperator{\End}{End}
\DeclareMathOperator{\Tr}{Tr}
\DeclareMathOperator{\Td}{Td}
\DeclareMathOperator{\Ker}{Ker}
\DeclareMathOperator{\ch}{ch}
\DeclareMathOperator{\inc}{\mathrm{inc}}
\DeclareMathOperator{\ret}{\mathrm{ret}}
\newcommand{\bdm}{\begin{displaymath}}
\newcommand{\edm}{\end{displaymath}}
\newcommand{\bq}{\begin{equation}}
\newcommand{\eq}{\end{equation}}
\newcommand{\bqn}{\begin{equation*}}
\newcommand{\eqn}{\end{equation*}}
\newcommand{\Cinft}{{\rm C^{\infty}}}
\newcommand{\CT}{{\rm C^{\infty}_c}}
\renewcommand{\L}{{\rm L}}
\newcommand{\Scal}{{\mathcal S}}
\newcommand{\Spin}{\mathrm{Spin}}
\newcommand{\g}{{\bf \mathfrak g}}
\newcommand{\eps}{\varepsilon}
\renewcommand{\det}{\mathrm{det}\,}
\newcommand{\Crit}{\mathrm{Crit}}
\newcommand{\pr}{\mathrm{pr}}
\DeclareMathOperator{\supp}{supp\,}
\DeclareMathOperator{\gd}{\partial}
\begin{document}

\title{A Riemann-Roch formula for singular reductions by circle actions}

\author{Benjamin Delarue}
\email{bdelarue@math.upb.de}
\address{Institut f\"ur Mathematik, Universit\"at Paderborn, 33098 Paderborn, Germany}

\author{Louis Ioos}
\email{louis.ioos@cyu.fr}
\address{Universit\'e de Cergy-Pontoise,
Site de Saint-Martin,
2 avenue Adolphe Chauvin,
95300 Pontoise, France}

\author{Pablo Ramacher}
\email{ramacher@mathematik.uni-marburg.de}
\address{Philipps-Universit\"at Marburg, Fachbereich Mathematik und Informatik, Hans-Meer\-wein-Str., 35032 Marburg, Germany}

\date{July 12, 2023}
\keywords{Geometric quantization, Riemann-Roch formulas, singular symplectic reduction, Witten integral, asymptotic expansions}
\begin{abstract}
We compute a {Hirzebruch-Riemann-Roch type}  formula for the invariant
Riemann-Roch number of a quantizable Hamiltonian $S^1$-manifold
$(M,\omega,\J)$ in terms of the
geometry of its symplectic quotient, allowing $0$ to be a singular value of the
moment map $\J:M\to\R$.  The formula involves a new explicit local invariant of the singularities.
Our approach relies on a complete singular stationary phase expansion of the
associated Witten integral and does not depend on the choice of a
symplectic desingularization. 
 \end{abstract}

\maketitle

\tableofcontents

\section{Introduction}

Let $(M,\omega,\J)$ be a compact connected symplectic manifold
equipped with a Hamiltonian action of a compact connected Lie group $G$
with moment map $\J:M \rightarrow \g^\ast$.
Then the associated
\emph{Marsden-Weinstein reduced space},
or simply the \emph{symplectic quotient}, is given by
\begin{equation}\label{M0}
\M_0:= \J^{-1}(\mklm{\0})/G\,.
\end{equation}
If $0$ is a regular value of the moment map, the symplectic quotient naturally
inherits the structure of a symplectic orbifold $(\M_0,\omega_0)$,
but in general, it is only a \emph{stratified symplectic space},
each smooth stratum being  naturally equipped with a symplectic structure.

This paper is devoted to the computation of the \emph{invariant Riemann-Roch
number} of $(M,\omega,\J)$  in terms of the geometry of the associated symplectic 
quotient \eqref{M0}. To introduce it, one has to impose the condition that
the cohomology class $[\omega]\in H^2(M,\Z)$ is integral, in which case
$(M,\omega)$ is called \emph{quantizable}. This condition is
equivalent to the existence of a Hermitian line bundle $(L,h^L)$
over $M$ equipped with a Hermitian connection $\nabla^L$ with
curvature $R^L$
satisfying the \emph{prequantization condition}
\begin{equation}\label{preq}
\omega=\frac{i}{2\pi} R^L\,.
\end{equation}
The Hamiltonian $G$-manifold $(M,\omega,\J)$ is \emph{prequantized} if the
action of $G$ lifts to an action on
$(L,h^L,\nabla^L)$ and the moment map
$\J:M \rightarrow \g^\ast$ is given in terms of the lifted action by the 
\emph{Kostant formula} \eqref{Kostant}. 
Upon choosing a $G$-invariant
almost complex structure $J\in\End(TM)$ over $M$
compatible with $\omega$, one gets a Riemannian metric on $M$ defined by
\begin{equation}
\label{gTX}
g^{TM}:=\omega(\cdot,J\cdot)\,.
\end{equation}
On can then consider the associated
\emph{Spin$^c$-Dirac operators} $D^\pm:\Omega^{0,\pm}(M,L)\to\Omega^{0,\mp}(M,L)$
defined in Section \ref{sec:2.3}, which are elliptic first order differential
operators, and thus have finite dimensional kernels. 
The action of $G$ on $(L,h^L)$ induces an action on these kernels,
and one defines the associated
\emph{invariant Riemann-Roch number} as
\begin{equation}\label{RRGfla}
\text{RR}^G(M,L):=\dim(\Ker D^+)^G-\dim(\Ker D^-)^G\,,
\end{equation}
where $(\Ker D^\pm)^G\subset \Ker D^\pm$ denotes the subspace of $G$-invariant vectors. 
By the classical invariance of the index of Fredholm operators, the invariant
Riemann-Roch number \eqref{RRGfla} does not depend on the choice of the
compatible almost complex structure $J\in\End(TM)$ over $(M,\omega)$,
nor on the choice of $h^L$ and $\nabla^L$ satisfying the
prequantization condition \eqref{preq}. 
In case  that $G$ is trivial, it reduces to the classical Riemann-Roch number $\text{RR}(M,L)$, which
is computed by the celebrated \emph{Hirzebruch-Riemann-Roch formula}
\begin{equation}\label{HRRfla}
\text{RR}(M,L)=\int_M{ e^{\omega}}\,\Td(M)\,,
\end{equation}
where the closed form $\Td(M)\in\Omega^*(M,\C)$ of mixed degrees
is the \emph{Todd form} of $(M,J,g^{TM})$, whose cohomology class
does not depend on $J$ and hence is a symplectic invariant of $(M,\omega)$.

In case of a general $G$ and when $0$ is a regular value of the moment map $\J$,
so that the symplectic quotient $(\M_0,\omega_0)$ is an orbifold
prequantized by a line bundle $(L_0,h^{L_0})$, by a general result of Meinrenken \cite{Mei98} one has
\begin{equation}\label{QR=0}
\text{RR}^G(M,L)=\int_{\M_0} e^{\omega_{0}}\Td(\M_0)=\text{RR}(\M_0,L_0)\,.
\end{equation}
In the special case $G=S^1$,
this was previously established
by Meinrenken in \cite[Theorem 2.1,\,(16)]{meinrenken96} and by Vergne in
\cite{Ver96}, as the culmination of a program
based on \emph{Witten's non-abelian localization formula} \cite{witten92},
which was first established rigorously by Jeffrey and Kirwan in
\cite{jeffrey-kirwan95}. To obtain  \eqref{QR=0} within this approach, one expresses $\text{RR}^G(M,L)$ by means of the equivariant Hirzebruch-Riemann-Roch formula in a guise due to Berline and Vergne \cite{berline-vergne82}, called the \emph{Kirillov formula}, and treats the resulting expression with a stationary phase version of the Jeffrey-Kirwan-Witten localization formula. The latter crucially involves a certain oscillatory integral called  the \emph{Witten integral}. The Kirillov formula is given in terms of   the \emph{equivariant Todd form} $\Td_\g(M)\in\Omega^*_G(M)$ defined in Section \ref{sec:RRflas}, whose cohomology class in the equivariant
cohomology
with analytic coefficients $H^\omega_G(M)$ is an invariant of the Hamiltonian
$G$-action on $(M,\omega)$.
By computing the asymptotics of the Witten integral, one passes from the equivariant cohomology $H^\omega_G(M)$
 to the usual cohomology $H(\M_0)$ of the symplectic quotient
through the well-known \emph{Kirwan map} $\kappa:H^\omega_G(M)\to H(\M_0)$
described in Proposition \ref{Kirwanth}.
Since  in the case $G=S^1$ one has  $\kappa(\Td_\g(M))=\Td(\M_0)$, this gives
the first equality in \eqref{QR=0}, while the second follows from \eqref{HRRfla}.

In this paper, we are mainly interested in the considerably more involved case when  $0$ is a singular value of the moment map and restrict ourselves to the case $G=S^1$, which already encompasses essential features of the singular case. Denoting by $M^{S^1}\subset M$ 
the set of fixed points of the $S^1$-action  
and setting $\J^{-1}(\mklm{\0})^\text{reg}:=\J^{-1}(\mklm{\0})\setminus \big ( M^{S^1} \cap\J^{-1}(\{0\})\big)$ 
we  have a stratification of $\J^{-1}(\{0\})$ according to
\begin{equation}\label{stratiffla}
\J^{-1}(\{0\})=  \big(M^{S^1}\cap\J^{-1}(\{0\})\big) \sqcup \J^{-1}(\{0\})^\text{reg}\,,
\end{equation}
where the connected components of  each stratum are  smooth submanifolds of $M$.
To avoid any artificial complications, we will assume for simplicity
that $S^1$ acts freely on $\J^{-1}(\mklm{\0})^\text{reg}$,
so that the \emph{regular stratum} of the symplectic quotient, defined by 
\begin{equation*}\label{topstratdef}
\M_0^\text{reg}:=\J^{-1}(\mklm{\0})^\text{reg}/S^1\,,
\end{equation*}
has no orbifold singularities.
 Following Notation \ref{def:F}, let us write $\F^0$ for the set of connected
components of $M^{S^1}\cap\J^{-1}(\{0\})$. A connected component $F\in\F^0$ is
called \emph{definite} if 
$\J$ attains a local extreme value at $F$, and \emph{indefinite} otherwise.  We write $\F^0_{\mathrm{def}}\subset\F^0$ and $\F^0_{\mathrm{indef}}\subset\F^0$ 
for the corresponding subsets. Note that because $\J^{-1}(\{0\})$  is connected, all  $F\in \F^0$ are either definite or indefinite, and we shall say that $\J^{-1}(\{0\})$ is of \emph{definite or indefinite type}, respectively.  For any $F\in\mathcal{F}^0$,
let $\omega_F:=\mathrm{inc}_F^\ast\omega\in\Omega^2(F,\R)$ be the symplectic form on $F\subset M$
induced by the inclusion $\mathrm{inc}:F \hookrightarrow M$, and let $\nu_{\Sigma_F}:\Sigma_F\to F$  be the associated symplectic normal bundle.
We write
\begin{equation*}
\Sigma_F=:\bigoplus_{k\in W}\Sigma_F^{(k)}
\end{equation*}
for its decomposition
into isotypic components with respect to the induced linear $S^1$-action,
where $W\subset\Z$
denotes the finite subset of weights as described in Section \ref{sec:stratif}.
The compatible almost complex structure $J\in\End(TM)$ over $(M,\omega)$
and the associated
Riemannian metric $g^{TM}$ given by \eqref{gTX} induce for each
weight $k\in W\subset\Z$ 
a complex structure and Hermitian norm $\|\cdot\|_F$ on $\Sigma_F^{(k)}$
by restriction,
and we write $R^{\Sigma_F^{(k)}}\in\Omega^2(F,\End(\Sigma_F^{(k)}))$
for the curvature of the connection on $\Sigma_F^{(k)}$ induced
by the Levi-Civita connection of $g^{TM}$ for each $k\in W$. 
Let us finally consider 
for $F\in\F^0_{\mathrm{indef}}$  the fiberwise product 
\begin{equation}\label{SFintro}
\nu_{S_F}:S_F:=S_F^+\times_F S_F^-\longrightarrow F\,,
\end{equation}
 where $S_F^\pm\to F$ are the unit sphere bundles
of the subbundles $\Sigma_F^\pm\subset\Sigma_F$
of positive and negative weights, respectively.
By the local normal form theorem of Proposition \ref{prop:localnormform}, the total space of $S_F$ is
naturally identified with
the boundary of a neighborhood of $F$ inside $\J^{-1}(\mklm{\0})$. 
We write 
\begin{equation}\label{S1SFS1}
\nu_{\mathfrak{S}_F}: \mathfrak{S}_F:=(S_F^+/S^1)\times_F(S_F^-/S^1) \longrightarrow F
\end{equation}
for the orbifold bundle obtained by taking the fiberwise product of the quotient of each sphere by the
induced locally free $S^1$-action. 
Its de Rham cohomology ring will be denoted by $H(\mathfrak{S}_F)$.  

Our main result is the following Hirzebruch-Riemann-Roch type formula for the invariant
Riemann-Roch number \eqref{RRGfla} in terms of the geometry of the symplectic
quotient \eqref{M0}. 

\begin{thm}\label{mainth}
Let $(M,\omega,\J)$ be a compact connected prequantized Hamiltonian $S^1$-manifold
such that the $S^1$-action is free on
$\J^{-1}(\mklm{\0})^\text{reg}$. 
  The $S^1$-invariant Riemann-Roch number \eqref{RRGfla} is given by
\begin{multline}\label{mainthfla}
\text{\emph{RR}}^{S^1}(M,L)=\int_{\M_0^{\mathrm{reg}}} e^{\omega_{0}}\kappa(\Td_\g(M))
+\sum_{F\in\mathcal{F}^0_{\mathrm{indef}}}
\int_{\mathfrak{S}_F}e^{\nu_{\mathfrak{S}_F}^\ast \omega_{F}}
\kappa_{F}(\Td_{\g}(M))\\
+\sum_{F\in\mathcal{F}^0}{\textup{Res}_F}\,
\bigg(z^{-1}\int_{F}\frac{e^{\omega_F}\Td(F)}
{\prod_{k\in W}\det_{\Sigma_F^{(k)}}(1-z^k
\exp(R^{\Sigma_F^{(k)}}/2\pi i))}\bigg)\,,
\end{multline}
where  $\kappa:\Omega_{S^1}^*(M)\longrightarrow
\Omega^*(\M_0^{\mathrm{reg}},\C)$ and $\kappa_F:H_{S^1}^\omega(M)\to H(\mathfrak{S}_F)$ denote
the \emph{regular Kirwan map} \eqref{Kirwantilde} and the \emph{exceptional Kirwan map} \eqref{excKirdef}, respectively, and 
$\textup{Res}_F$ stands for the residue at $z=0$ if $\J$ has a local minimum at $F$,
the residue at $z=\infty$ if $\J$ has a local maximum at $F$,
or the average of the two residues otherwise.

Furthermore, every term on 
the right-hand side of
Formula \eqref{mainthfla} is independent of the choice of a
compatible almost complex
structure $J\in\End(TM)$ over $(M,\omega)$ and of
the choice of $(L,h^L,\nabla^L)$ satisfying the prequantization condition
\eqref{preq}.
\end{thm}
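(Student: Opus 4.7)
The plan is to realise the right-hand side of \eqref{mainthfla} as the complete singular stationary-phase expansion of the Witten integral, following the Jeffrey--Kirwan--Witten strategy recalled in the Introduction but pushing it through the singular value $0\in(\mathfrak{s}^1)^*$. I would start by rewriting $\text{RR}^{S^1}(M,L)$ via the Kirillov formula: extracting the $S^1$-invariant part of the equivariant Hirzebruch--Riemann--Roch character and inserting a Gaussian cut-off $e^{-\epsilon\|\J\|^2}$ \`a la Witten, one obtains an oscillatory integral of the schematic form
\[
\text{RR}^{S^1}(M,L)=\lim_{\epsilon\downarrow 0}\int_{\mathfrak{s}^1}\int_M e^{-\epsilon|X|^2}\,e^{\omega_\g(X)}\,\Td_\g(M)(X)\,dX,
\]
whose phase concentrates on the critical set $\J^{-1}(\{0\})$. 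The task then reduces to carrying out a stationary-phase expansion along this stratified critical set.

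Since $0$ is singular, the critical locus is no longer a manifold but decomposes as in \eqref{stratiffla}. On $\J^{-1}(\{0\})^\mathrm{reg}$, where $S^1$ acts freely, the standard non-abelian localisation applies verbatim and yields, exactly as in the proof of \eqref{QR=0} in the regular case, the first term $\int_{\M_0^\mathrm{reg}} e^{\omega_0}\kappa(\Td_\g(M))$ via the regular Kirwan map $\kappa$. The remaining contributions are localised in arbitrarily small neighbourhoods of the fixed-point components $F\in\F^0$, which I would analyse by means of the local normal form of Proposition \ref{prop:localnormform}: on a tubular neighbourhood of $F$, the moment map reads $\J=\J|_F+\tfrac12\sum_{k\in W}k\|\cdot\|_F^2$ on $\nu_{\Sigma_F}=\bigoplus_{k\in W}\Sigma_F^{(k)}$, so that the Witten integral factorises into a base integral over $F$ and fibrewise oscillatory integrals on each isotypic component.

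Fibrewise integration produces, via the $S^1$-equivariant Euler class, the denominator $\prod_{k\in W}\det_{\Sigma_F^{(k)}}(1-z^k\exp(R^{\Sigma_F^{(k)}}/2\pi i))$ with $z=e^{iX}$ the equivariant parameter, so that the contribution of each $F\in\F^0$ to the expansion is encoded in a contour residue in $z$. For $F\in\F^0_\mathrm{def}$, either all weights $k\in W$ are positive (local minimum) or all are negative (local maximum), which fixes the residue to be the one at $z=0$ or $z=\infty$ respectively, and no further term appears. For $F\in\F^0_\mathrm{indef}$, positive and negative weights coexist and the stationary-phase prescription has to be symmetrised as the average of the residues at $0$ and $\infty$; simultaneously, the ends of the transverse phase line produce an additional ``boundary'' contribution supported on the fibre product $S_F^+\times_F S_F^-$, which under the locally free $S^1$-action descends to $\mathfrak{S}_F$ in \eqref{S1SFS1} and, together with the pull-back under the exceptional Kirwan map $\kappa_F$, yields the middle sum in \eqref{mainthfla}.

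The main obstacle is to make this singular stationary-phase expansion rigorous. One has to establish a complete asymptotic expansion for an oscillatory integral whose phase $\|\J\|^2$ has a non-smooth critical set with transverse Hessian that degenerates precisely along each $F\in\F^0$, and whose fibrewise integrals at $F$ are divergent and must be renormalised by the residue prescription; the combinatorics of collapsing all subleading corrections at $F$ into a single meromorphic residue in $z$ is the core of the argument. Once this is in place, independence from the compatible almost complex structure $J$ is formal: the Todd and equivariant Todd classes, the symplectic forms $\omega_0$, $\omega_F$, the orbifold bundles $\mathfrak{S}_F$, and the Kirwan maps $\kappa$, $\kappa_F$ all depend on $J$ only through data whose relevant cohomology classes are symplectic invariants, so that a homotopy of compatible $J$'s leaves each of the three pieces of \eqref{mainthfla} individually unchanged.
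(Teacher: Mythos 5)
Your overall strategy (Kirillov formula plus a Witten-type localization, local normal forms near the singular fixed components, residues at $z=0,\infty$ and an extra contribution from $S_F^+\times_F S_F^-$) is the same as the paper's in spirit, but as written there are genuine gaps. First, your reduction step is not correct as stated: the Kirillov formula expresses $\chi^{(m)}(\exp X)$ only for $X$ sufficiently close to $0\in\g$, and for the compact group $S^1$ the invariant Riemann--Roch number is $\int_{S^1}\chi^{(m)}(g)\,dg$; one cannot replace this by a Gaussian-regularized integral of the Kirillov integrand over all of $\g\simeq\R$. The paper instead splits the group integral by a cutoff $\psi$ supported near $e$: the near-identity part gives the Witten integral of Definition \ref{def:mainint} with a compactly supported test function $\phi\equiv 1$ near $0$ (Proposition \ref{RRasyprop}), while the part away from the identity is evaluated by the Atiyah--Segal--Singer fixed point formula of Theorem \ref{eqindfla}. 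That second piece is essential: it is exactly what combines with the distributional terms $\underline{x}_\pm^{-k}$ and $\underline{x}^{-k}$ appearing in the expansion of the Witten integral (via the auxiliary cutoff $\tilde\psi$ argument and the residue theorem) to produce the residue terms $\textup{Res}_F$ in \eqref{mainthfla}. In your sketch the residues are attributed to fibrewise integration alone, and no mechanism is exhibited that selects the residue at $z=0$, at $z=\infty$, or their average according to the type of $F$.

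Second, the ``main obstacle'' you acknowledge --- making the singular stationary phase expansion rigorous --- is precisely the content of Sections \ref{sec:sympquot} and \ref{sec:I}: the bi-spherical coordinates \eqref{eq:Psi}, the splitting of the contribution of each indefinite $F$ into the integrals $I_1^F(m)$ and $I_2^F(m)$, the equivariant Stokes argument that converts the exact correction $d_\g\bar\beta_F$ into a boundary term on $S_F=S_F^+\times_F S_F^-$ and hence into the exceptional Kirwan map term, and Lemmas \ref{assumption} and \ref{Kirwanexact}, which allow one to work with representatives in normal form and give the cohomological meaning of each term. Deferring this is deferring the proof. Relatedly, the paper uses no $\epsilon$-regularization: the asymptotic parameter is the tensor power $m$ of $L$, the expansion holds up to $\O(m^{-\infty})$, and this error is shown to vanish identically because $m\mapsto\text{RR}^{S^1}(M,L^m)$ is a polynomial (Meinrenken), after which one sets $m=1$; without some such exactness argument your $\epsilon\to 0$ limit yields at best an asymptotic statement, not the identity \eqref{mainthfla}. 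Finally, the independence of $J$ is not purely formal: the first term is an integral over the noncompact stratum $\M_0^{\mathrm{reg}}$ of the image of the regular Kirwan map, defined with a connection in normal form near the singularities, and its invariance under a change of equivariant representative of $\Td_\g(M)$ (hence of $J$) requires the boundary analysis of Lemma \ref{Kirwanexact}, which uses that the correction form can be chosen to restrict to zero on each indefinite $F$.
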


Theorem \ref{mainth} shows in particular that every term in 
Formula \eqref{mainthfla} is a well-defined invariant of the Hamiltonian
action of $S^1$ on $(M,\omega)$, answering a question
of Sjamaar in
\cite[p.\,126\,(ii)]{Sja95}.
Let us also point out that Theorem \ref{mainth} is already
relevant in the complex case, giving an explicit formula for
the canonical ring of singular projective varieties $\M_0$
obtained as GIT quotients by a $\C^*$-action of a smooth
projective variety $M$.

In case that $0$ is a regular value of the moment map $\J$,
so that $M^{S^1}\cap\J^{-1}(\mklm{\0})=\emptyset$, the last two terms of Formula \eqref{mainthfla} vanish and
Theorem \ref{mainth} reduces to the invariant Riemann-Roch formula
\eqref{QR=0}, as already explained there. At the other extreme end, in case that
$M^{S^1}\cap\J^{-1}(\mklm{\0})=\J^{-1}(\mklm{\0})$, so that the action of $S^1$ on
$\J^{-1}(\mklm{\0})$ is trivial and $\M_0=\J^{-1}(\mklm{\0})$ inherits
a natural prequantizing line bundle $(L_0,h^{L_0})$ by restriction,
the first two terms of
Formula \eqref{mainthfla} vanish and
Theorem \ref{mainth} reduces again to \eqref{QR=0} by taking into account the fact that in this case all $F\in \F^0$
are definite, so that all weights $k\in W$ of the $S^1$-action on
$\Sigma_F$ are of the same sign, and that in this case the sum over all
residues precisely equals the middle term in \eqref{QR=0}. 
This was first established by Duistermaat, Guillemin,
Meinrenken and Wu in \cite[\S\,2]{DGMW95}. Theorem \ref{mainth} can thus be seen
as an interpolation between these two extreme cases which covers also the genuinely singular case, the middle term in Formula
\eqref{mainthfla} appearing as the novelty of our result.

To describe this term more closely, let $\Theta^\pm_{S_F}\in\Omega^1(S_F,\R)$ be the pullbacks
to \eqref{SFintro} of connections
for the $S^1$-actions on $S_F^\pm$ in the sense of \eqref{connfla} for any $F\in\F^0_{\mathrm{indef}}$. 
At the level of $S^1$-equivariant differential forms
as in Definition \eqref{OmS1} and using
the same notation as for the Kirwan
map \eqref{Kirwandef}, the image of an
equivariantly closed form $\varrho\in\Omega^*_{S^1}(M)$
by the \emph{exceptional Kirwan map} is the element in $\Omega^*(\mathfrak{S}_F,\C)$ defined by
\begin{equation}\label{excKirdef}
\kappa_F(\varrho):=\frac{\frac{1}{2}\big(\varrho_{S_F}(\frac{i}{2\pi}d \Theta_{S_F }^+)
    +\varrho_{S_F}(\frac{i}{2\pi}d \Theta_{S_F }^-)\big)-\varrho_{S_F}\big(\frac{i}{4\pi}(d \Theta_{S_F }^++d \Theta_{S_F }^-)\big)}{d\Theta^+_{S_F }-d\Theta^-_{S_F }}\,,
\end{equation}
where we wrote 
$\varrho_{S_F}:=\nu_{S_F}^\ast\mathrm{inc}_F^\ast \varrho\in \Omega^*_{S^1}(S_F)$ for the pullback to $S_F$ of the restriction of
$\varrho$ to $F$.  The numerator on the right-hand side of \eqref{excKirdef} is
a multiple of $d \Theta_{S_F}^+-d \Theta_{S_F }^-$ inside the ring
$\Omega^*(S_F,\C)$, which gives an obvious sense to the fraction.
Hence, as it is also closed and
basic for the action of $S^1$
on both sphere bundles $S_F^\pm$, by Proposition \ref{pi*iso},
Formula \eqref{excKirdef} induces a well-defined map 
$\kappa_F:H_{S^1}^\omega(M)\to H(\mathfrak{S}_F)$
in cohomology. In fact, the numerator in \eqref{excKirdef}
is even a multiple of $(d \Theta_{S_F}^+-d \Theta_{S_F }^-)^2$,
so that the right-hand side of \eqref{excKirdef} is itself a multiple of
$d \Theta_{S_F}^+-d \Theta_{S_F }^-$. This implies
in particular that $\kappa_F$, and hence the middle term in Formula 
\eqref{mainthfla}, vanishes when $\dim M<6$. 

The invariant Riemann-Roch formula \eqref{QR=0} is the content of
the celebrated \emph{Quantization commutes with Reduction} principle
of Guillemin and Sternberg, which they formulated in
\cite{guillemin-sternberg84} in the case of K\"ahler manifolds, always under the assumption that  $0$ a  regular value of the moment map. 
In that case, the kernel of the Spin$^c$-Dirac operator $D^\pm$ reduces to the kernel of the Dolbeault
$\overline\partial$-operator acting on $\Omega^{0,\pm}(M,L)$.
In particular, the kernel of the $\overline\partial$-operator restricted to
$\Omega^{0,0}(M,L)$ coincides with the space $H^0(M,L)$
of holomorphic sections of $L$
over $M$, which is interpreted in \cite{guillemin-sternberg84} as the
quantization of the classical phase space $(M,\omega)$.  Furthermore, $0$ being a regular value, the symplectic reduction
$(\M_0,\omega_0)$ inherits a natural structure of a K\"ahler orbifold, and
it was shown in \cite{guillemin-sternberg84} that there is
a natural isomorphism
\begin{equation}\label{GSQR=0}
H^0(M,L)^G\simeq H^0(\M_0,L_0)\,.
\end{equation}
The identity \eqref{GSQR=0} was generalized to the kernel of the
$\overline\partial$-operator restricted to
$\Omega^{0,j}(M,L)$ for each $j>0$ by Teleman \cite{Tel00}
and Zhang \cite{Zha98}. Taking the alternating sum of dimensions of these spaces,
this precisely leads to the invariant Riemann-Roch formula \eqref{QR=0} in the context of K\"ahler manifolds. But since Formula \eqref{QR=0}
still makes sense for general symplectic manifolds, see Section \ref{sec:2.3}, it constitutes
the appropriate generalization of Quantization commutes with Reduction,
since the isomorphism \eqref{GSQR=0} does not make sense
in general. Formula \eqref{QR=0} was first established in full generality by 
Meinrenken in \cite{Mei98} using the symplectic cutting techniques of Lerman
\cite{Ler95},  
then by Tian and Zhang \cite{TZ98}
using the analytic localisation techniques of Bismut-Lebeau \cite{BL91}.
Formula \eqref{QR=0} was  generalized to the case of manifolds with
boundary by Tian and Zhang
in \cite{TZ99}, and after this  to the case of non-compact $(M,\omega)$ 
by Ma and Zhang in \cite{MZ14} and Paradan in \cite{Par03}. A
generalization to compact CR-manifolds was recently established by Ma, Marinescu, 
and Hsiao in \cite{MMC19}.

In case that $0$ is a singular value of the moment
map, there is an immediate difficulty coming from the fact that there is no natural
definition of the Riemann-Roch number $\text{RR}(\M_0,L_0)\in\Z$ of a
singular symplectic quotient $\M_0$, and one is tempted to define it directly as
$\text{RR}(\M_0,L_0):=\text{RR}^G(M,L)$, making the result tautological.
More substantially, one can consider the Riemann-Roch number of
various notions of symplectic desingularization of stratified symplectic spaces,
such as Kirwan's \emph{partial desingularization} or the
\emph{shift desingularization}. The
invariant Riemann-Roch number $\text{RR}^G(M,L)$ was shown to coincide
with the Riemann-Roch number of Kirwan's partial desingularization
by Meinrenken and Sjamaar in \cite{meinrenken-sjamaar}, and
of the shift desingularization by Meinrenken and Sjamaar in
\cite{meinrenken-sjamaar} and Paradan in \cite{Par01}. In the case $G=S^1$,
Tian and Zhang established in \cite[Theorem 6.4]{TZ99} an identity in terms of
a Riemann-Roch number for shift desingularizations containing
a term similar to the last term in Formula \eqref{mainthfla}.

The major problem with the approaches described above is that the
symplectic structure of such desingularizations
depends on the choice of an auxiliary parameter $\epsilon>0$, so that these results do not compute the invariant
Riemann-Roch number $\text{RR}^G(M,L)$ in terms of the symplectic geometry of the symplectic quotient $\M_0$ itself and do not provide a canonical 
Hirzebruch-Riemann-Roch type formula such as \eqref{QR=0}.
This is the main achievement of Theorem \ref{mainth}, which provides
an explicit formula in terms of the symplectic invariants
of the symplectic quotient itself
in the case $G=S^1$. In order to compare Theorem \ref{mainth} with
the mentioned previous results, we provide in Section \ref{Kirwanressec}
natural topological conditions on the $S^1$-action under which
the first term in \eqref{mainthfla} can be interpreted topologically as
\begin{equation}\label{eq:09.04.2023}
\int_{\M_0^{\mathrm{reg}}}e^{\omega_{0}}\kappa(\Td_\g(M))=
\int_{\widetilde \M_0}
e^{\widetilde \omega_0}\widetilde \kappa(\Td_\g(M))\,,
\end{equation} 
where $\widetilde \M_0$ denotes the partial resolution of $\M_0$
and $\widetilde \omega_0$  the degenerate $2$-form over
$\widetilde \M_0$ induced by $\omega_0$, while 
$\widetilde\kappa:H^{\omega}_{S^1}(M)\longrightarrow H(\widetilde\M_0)$
is the usual Kirwan map of the resolution. By contrast, the Riemann-Roch formula
obtained via the method of Meinrenken and Sjamaar in \cite{meinrenken-sjamaar}
reads
\begin{equation}\label{QR=eps}
\text{RR}^G(M,L)=\int_{\widetilde{\M}_0}
e^{\widetilde\omega_{0,\epsilon}}\Td_\epsilon(\widetilde\M_0)\,,
\end{equation}
where the symplectic form
$\widetilde\omega_{\epsilon}\in\Omega^2(\widetilde\M_0,\R)$
depends on the choice of a parameter $\epsilon>0$ such
that $\widetilde\omega_{\epsilon}\xrightarrow{\epsilon\to 0}\widetilde\omega_0$,
while
$\Td_\epsilon(\widetilde\M_0)\in\Omega^\ast(\widetilde\M_0,\R)$
denotes the induced Todd form.
In particular, as $\widetilde\omega_0$ is
degenerate, the Todd form $\Td_\epsilon(\widetilde\M_0)$ does not admit
a limit as $\epsilon\to 0$, so that the right hand side is not well defined
a priori for $\epsilon=0$. 
Let us also point out that Jeffrey, Kiem, Kirwan, and Woolf in 
\cite{JKKW03}, and Lerman and Tolman in \cite{lerman-tolman00} in the case of $G=S^1$, 
studied Kirwan maps to the intersection cohomology of the singular
quotient. The relation of their Kirwan maps to our resolution  Kirwan map is explained in Remark \ref{rem:27.04.2023}.

Our main tool for the proof of Theorem \ref{mainth} is the so-called \emph{Witten integral}, which we introduce in Definition \ref{def:mainint}.
In fact, Theorem \ref{mainth} is established as a consequence of our second main result,
Theorem \ref{thm:wittenasymp}, where we compute the asymptotics of the Witten
integral in terms of geometric invariants associated with the singular
symplectic quotient. This allows us to  directly compute the
invariant Riemann-Roch number $\text{RR}^{S^1}(M,L)$ by adapting the
method of Meinrenken in \cite{meinrenken96} to the singular case.
As already mentioned, the works of Meinrenken \cite{meinrenken96} and Vergne \cite{Ver96}
are based on Witten's non-abelian localization formula \cite{witten92}, related  for the first time to the
Quantization commutes with Reduction principle by Jeffrey and Kirwan in \cite{jeffrey-kirwan96}.
The asymptotic parameter in   the Witten integral is  given by the power $m\to \infty $ of 
the tensor power $L^m:=L^{\otimes m}$ of the prequantizing line bundle.
It can therefore be regarded as a semiclassical limit, so that from this viewpoint Theorem \ref{mainth} becomes an instance of the correspondence principle of quantum mechanics.  Our approach in the singular case crucially relies on 
an explicit singular stationary phase expansion of the Witten integral. Compared with the approaches mentioned previously, our present approach does not rely on any desingularization process. It was preceded  by work of K\"uster (n.k.a.\ Delarue) and Ramacher in \cite{kuester-ramacher20} within a purely analytic context, motivated by the original attempt of Ramacher \cite{ramacher13} of proving residue formulae via singular equivariant asymptotics. 

Let us finally note that the assumption of $S^1$
acting freely on $\J^{-1}(\mklm{\0})^\text{reg}$, meaning that there are
no orbifold singularities over $\M_0^{\textup{reg}}$, is not essential.
In fact, the result of Meinrenken in
\cite{meinrenken96}, which we extend in this paper to case when $0$ is a singular value
of the moment map, holds for orbifolds as well, and our contribution mainly focuses
on the most singular stratum $M^{S^1}\cap\J^{-1}(\{0\})$. Following the general method
of Meinrenken, one can certainly extend our method to orbifolds, obtaining a
Kawasaki-Riemann-Roch type formula which extends \eqref{mainthfla}.

We begin our exposition  in Section \ref{sec:2} by giving a detailed account on the background and setup of our paper. In Section \ref{sec:sympquot} we study the geometry of the zero level set of the moment map around its singularities, which will be crucial for the ensuing analysis, and introduce the relevant Kirwan maps in Section \ref{sec:Kirwan}.  Based on these results, we derive in Section \ref{sec:I} a complete asymptotic expansion of the Witten integral. Finally, we establish in Section \ref{sec:6}
the proof of our main result Theorem \ref{mainth}.\newline

{\bf Acknowledgments.} We would like to thank Mich\`{e}le Vergne for her continuous interest in our work and her encouragement.  The first author has received funding from the Deutsche Forschungsgemeinschaft (German Research Foundation, DFG) through the Priority Program (SPP) 2026 ``Geometry at Infinity'', while the second author was partially supported by the Alexander von Humboldt Stiftung (Alexander von Humboldt Foundation).

\section{Background and setup}\label{sec:prodcohoms2}
\label{sec:2}

Before we introduce our setup, let us first fix some global notation. For any vector bundle $E$ over a smooth manifold $M$ and for all $k\in\N$,
we will write $\Omega^k(M,E)$ for the space of differential forms of degree
$k$ with values in $E$, and
$\Omega^*(M,E):=\bigoplus_{k=0}^{\textup{dim} M}\Omega^k(M,E)$ for the
space of differential forms of mixed degrees with value in $E$.
We will use the same notation for any vector space $V$,
which we regard as a trivial vector bundle over $M$. We will denote the inclusion $A\hookrightarrow B$ of a subset $A\subset B$ into a set $B$  by $\mathrm{inc}_A$, the target set $B$ being clear from the context. For a finite Cartesian product $X=X_1\times\cdots X_N$, we will write $\mathrm{pr}_{X_j}:X\to X_j$ for the canonical projection onto the factor $X_j$.

\subsection{Equivariant cohomology}
\label{sec:eqcoh} 
Let $M$ be a smooth manifold equiped with a smooth action of
a compact Lie group $G$. 
Write $S^\omega(\g^*)$ for the complex vector
space of entire analytic series on $\g$ with complex coefficients. The coadjoint action of
$G$ on $\g^*$  induces a natural action on $S^\omega(\g^*)$.

\begin{definition}\label{OmG}
The \emph{complex of analytic $G$-equivariant differential forms}
is the complex vector space of $G$-invariants
\begin{equation*}
\Omega^*_G(M):=\left(\Omega^*(M,\C)\otimes S^\omega(\g^*)\right)^G\,
\end{equation*}
for the natural $G$-action on the tensor product,
equipped
with the \emph{equivariant differential}
\begin{equation}
\begin{split}
d_\g:\Omega_G^*(M)&\longrightarrow\Omega_G^*(M),\\
\sigma(X)&\longmapsto d\sigma(X)+2\pi i\,\widetilde{X}\imult\sigma(X)\,.\label{eq:dg}
\end{split}
\end{equation}
The cohomology of the complex
$(\Omega^*_G(M),d_\g)$ is denoted by $H^\omega_G(M)$ and called
 \emph{equivariant cohomology} of $M$ with analytic coefficients.
\end{definition}

In the above definition and in the sequel, we will often write equivariant
differential forms
$\sigma(X)\in\Omega^*_G(M)$ with explicit dependence on the variable
$X\in\g$.
We point out that there are several competing conventions for the definition of the equivariant differential in the literature, with the factor $2\pi i$ in \eqref{eq:dg} replaced by other constants. Similarly, the sign in \eqref{eq:sign24} is a convention. We follow here Meinrenken's conventions in \cite{meinrenken96}.  The following version of the Stokes Lemma for equivariant differential forms
can be found for instance in \cite[\S 4]{bott-tu}.

\begin{lem}\label{eqStokes}
{\bf (Equivariant Stokes' lemma)}
Let $U\subset M$ be an open set such that its closure
$\overline U\subset M$ is a compact submanifold with boundary. Then, for any
$\sigma\in\Omega^*_G(M)$ we have
\begin{equation*}
\int_U d_{\g}\sigma(X)=\int_{\partial\overline U}\sigma(X)\,.
\end{equation*}
\end{lem}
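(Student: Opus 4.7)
The plan is to reduce the equivariant Stokes' lemma to the ordinary Stokes theorem by exploiting the degree restrictions imposed by the ambient manifold. The key observation is that the equivariant differential $d_\g$ splits into two pieces: the usual exterior derivative $d$, which raises form-degree by one, and the contraction $2\pi i\,\widetilde{X}\imult$, which lowers form-degree by one. Since integration over the $n$-dimensional region $U$, with $n=\dim M$, picks out only the top-degree ($n$-form) component of the integrand, the contraction piece will be automatically killed by a dimensional argument.

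First I would fix $X\in\g$ and view $\sigma(X)\in\Omega^*(M,\C)$ simply as an ordinary mixed-degree complex differential form; the equivariance and analyticity play no role in the pointwise identity, and the dependence on $X$ is inert throughout the computation. Decomposing $\sigma(X)=\sum_{k=0}^n\sigma_k(X)$ into homogeneous components, the $n$-form part of
$$d_\g\sigma(X)=d\sigma(X)+2\pi i\,\widetilde{X}\imult\sigma(X)$$
is $d\sigma_{n-1}(X)+2\pi i\,\widetilde{X}\imult\sigma_{n+1}(X)$. The contraction term vanishes trivially because $\Omega^{n+1}(M,\C)=0$.

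Consequently, $\int_U d_\g\sigma(X)=\int_U d\sigma_{n-1}(X)$, and the classical Stokes theorem applied to the compact manifold-with-boundary $\bar U$ yields $\int_U d\sigma_{n-1}(X)=\int_{\partial\bar U}\sigma_{n-1}(X)$. Since $\partial\bar U$ has dimension $n-1$, the integral on the right only sees the degree-$(n-1)$ component of $\sigma(X)$, hence equals $\int_{\partial\bar U}\sigma(X)$, proving the identity.

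There is no genuine obstacle: the statement is a routine consequence of the classical Stokes theorem once the degree bookkeeping of $d_\g$ is made explicit, which is presumably why \cite[\S 4]{bott-tu} is cited as a reference rather than a full proof being supplied. The only mild subtlety worth flagging is that the identity holds as an equality of analytic functions of $X\in\g$, with both sides evaluated pointwise in $X$; this presents no difficulty since the above argument is valid for each fixed $X$.
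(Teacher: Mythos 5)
Your argument is correct: since integration over $U$ (respectively $\partial\bar U$) only sees the degree-$\dim M$ (respectively degree-$(\dim M-1)$) component, the contraction term in $d_\g$ drops out and the identity reduces, for each fixed $X\in\g$, to the classical Stokes theorem. This is precisely the routine degree-bookkeeping proof that the paper delegates to the cited reference \cite[\S 4]{bott-tu}, so there is nothing further to compare.
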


For a submanifold $N\subset M$ we write
$\nu_{\Sigma_N}:\Sigma_N\to N$ for its normal bundle inside $TM$  
and  identify $N$ canonically with the zero section in $\Sigma_N$. The following version of the classical homotopy lemma for equivariant differential forms
follows for instance from \cite[Theorem  6.1]{goertscheszoller}.

\begin{lem}\label{eqHom}{\bf (Equivariant homotopy lemma)}  
Let $N\subset M$ be a submanifold preserved by the action of $G$, and let $\Phi_N:V_N\to U_N$
be a $G$-equivariant diffeomorphism between a tubular neighborhood
$V_N\subset\Sigma_N$ of the zero section of $\Sigma_N$ and a
tubular neighborhood $U_N\subset M$ of $N$. Then, for any equivariant cohomology class
$[\varrho] \in H^\omega_G(M)$  there exists an equivariant form $\beta_N \in  \Omega_{G}^\ast(V_N)$ such that
\begin{equation}\label{eq:eqHom}
\Phi_{N}^\ast(\varrho|_{U_N})
= \nu_{\Sigma_N}^\ast  \mathrm{inc}_N^*\varrho|_{V_N}+d_\g \beta_N.
\end{equation}
Furthermore, the equivariant form $\beta_N \in  \Omega_{G}^\ast(V_N)$ can be chosen
such that $\inc_N^*\beta_N\equiv 0$ and such that for any open set $S\subset V_N$ on which  we already have $\Phi_{N}^\ast(\varrho|_{U_N})|_{S}
= \nu_{\Sigma_N}^\ast \mathrm{inc}_N^*\varrho|_{S}$, one has $\beta_N|_{S}\equiv 0$.
\end{lem}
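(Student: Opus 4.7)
The plan is to realize $\beta_N$ as $K(\Phi_N^*\varrho)$, where $K$ is the equivariant Cartan-style homotopy operator associated with the fibrewise scaling retraction of $\Sigma_N$ onto its zero section. First, identify $N$ with the zero section of $\Sigma_N$ via $\inc_N$, and consider the family $h_t:\Sigma_N\to\Sigma_N$, $h_t(v):=tv$ for $t\in[0,1]$. Since $G$ acts linearly on the fibres of $\Sigma_N$, each $h_t$ is $G$-equivariant; after possibly shrinking $V_N$ one may arrange $h_t(V_N)\subset V_N$ for all $t\in[0,1]$, so that $h_1=\mathrm{id}$ and $h_0=\inc_N\circ\nu_{\Sigma_N}$.

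Next, define $K:\Omega_G^*(V_N)\to\Omega_G^{*-1}(V_N)$ by
\[
(K\sigma)_v(w_1,\dots,w_{k-1}):=\int_0^1\sigma_{tv}\!\bigl(v,(dh_t)_v w_1,\dots,(dh_t)_v w_{k-1}\bigr)\,dt,
\]
interpreted coefficient-wise in the analytic variable $X\in\g$ so as to preserve the class of forms from Definition \ref{OmG}. The key step is the equivariant Cartan homotopy formula
\[
d_\g K\sigma+K d_\g\sigma=\sigma-\nu_{\Sigma_N}^*\inc_N^*\sigma\quad\text{on }V_N.
\]
This follows from the classical identity $dK+Kd=h_1^*-h_0^*$ combined with the vanishing $\iota_{\widetilde X}K+K\iota_{\widetilde X}=0$, which is immediate from antisymmetry of forms once one notes that $(dh_t)_v\widetilde X(v)=\widetilde X(tv)$ by $G$-equivariance of $h_t$; equivalently, $K$ can be viewed as fibre integration along $[0,1]$ of the pullback by $H:[0,1]\times V_N\to V_N$, $(t,v)\mapsto tv$, so that the formula reduces to the equivariant Stokes lemma \ref{eqStokes} applied to the cylinder.

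The rest is essentially bookkeeping. Arranging $\Phi_N$ to restrict to the identity on the zero section, as in the standard tubular neighborhood construction, gives $\inc_N^*(\Phi_N^*\varrho)=\inc_N^*\varrho$. Since $\varrho$ is $d_\g$-closed and $\Phi_N$ is $G$-equivariant, $d_\g(\Phi_N^*\varrho)=0$, and the Cartan formula yields
\[
\Phi_N^*\varrho-\nu_{\Sigma_N}^*\inc_N^*\varrho=d_\g\!\bigl(K(\Phi_N^*\varrho)\bigr),
\]
so that $\beta_N:=K(\Phi_N^*\varrho)$ is the desired primitive. The two supplementary properties are transparent from the explicit formula: the first slot of $(K\sigma)_v$ is filled by the vector $v$, which vanishes at $v=0$, forcing $\inc_N^*\beta_N\equiv0$; and on any open set $S\subset V_N$ stable under the scalings $h_t$ on which $\Phi_N^*\varrho$ is already basic for $\nu_{\Sigma_N}$, the interior product with $v$ annihilates the integrand pointwise along every radial segment of $S$, yielding $\beta_N|_S\equiv0$. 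The only technical nuisance is the verification of the equivariant Cartan identity under the analytic-coefficient convention of \eqref{eq:dg}, which requires careful bookkeeping in the $X$-variable but presents no structural surprises.
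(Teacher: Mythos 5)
Your construction is exactly the route the paper takes: the paper delegates this lemma to the explicit homotopy operator of \cite[Theorem 6.1]{goertscheszoller}, which is precisely your fibrewise-scaling Cartan operator $K$ for $h_t(v)=tv$, and your verification of the equivariant homotopy identity (via $\iota_{\widetilde X}K+K\iota_{\widetilde X}=0$ and equivariance of $h_t$) and of $\inc_N^*\beta_N\equiv 0$ is correct. Two caveats, one cosmetic and one substantive. Cosmetically, you ``shrink $V_N$'' to get $h_t(V_N)\subset V_N$, but \eqref{eq:eqHom} is asserted on the given $V_N$; argue instead that $V_N$ may be assumed fibrewise star-shaped (or reparametrize the radial retraction so that it preserves $V_N$), and make explicit the standing convention that $\Phi_N$ restricts to the identity on the zero section (true for the maps $\Phi_F$ and the retraction charts used in the paper) --- without it, $\inc_N^*\beta_N\equiv 0$ together with \eqref{eq:eqHom} would already be inconsistent.

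Substantively, your proof of the last clause only covers open sets $S$ that are stable under the scalings $h_t$, whereas the lemma as printed allows an arbitrary open $S$ on which $\Phi_N^*\varrho$ already equals $\nu_{\Sigma_N}^*\inc_N^*\varrho$. This restriction is in fact unavoidable rather than a defect of your argument: take $G$ trivial, $N$ a point, $V_N=U_N=\R^2$, $\Phi_N=\mathrm{id}$, and $\varrho=df$ with $f$ equal to two distinct constants on two disjoint balls away from the origin; every primitive of $\varrho$ is $f+c$ and cannot vanish on both balls, so the unrestricted clause cannot be achieved. The radial homotopy operator --- and hence also the paper's own sketch, which rests on the same operator --- yields vanishing of $\beta_N$ exactly on sets saturated under the fibrewise retraction, and this restricted form is what is actually invoked in the proof of Lemma \ref{assumption}, where the sets in question are invariant under the relevant $q$-scaling once $V_F$ is chosen as a fibrewise norm ball. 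So your proposal proves the statement in the form in which it is used and by the paper's own method; just state the stability hypothesis on $S$ explicitly instead of leaving it implicit.
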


The second part of Lemma \ref{eqHom}, which will be used in
Section \ref{sec:I} to
establish Lemma \ref{assumption}, follows from the fact that $N$ is a strong deformation retract of $U_N$ and the explicit form
of the homotopy operator used for instance in \cite[Theorem  6.1]{goertscheszoller}
to construct the equivariant form $\beta_N \in  \Omega_{G}^\ast(V_N)$
of Formula \eqref{eq:eqHom}.

Consider now the important special case when the $G$-action is
locally free, so that the quotient map
\begin{equation}\label{quomap}
\pi:M\longrightarrow M/G
\end{equation}
is a $G$-principal bundle over the orbifold $M/G$.
We then have the following basic result.

\begin{prop}\label{pi*iso}
The pullback by the quotient map \eqref{quomap} induces an isomorphism
of complexes
\begin{equation}
\pi^*:(\Omega(M/G,\R),d)\xrightarrow{~\sim~}(\Omega(M)^G_{\mathrm{bas}},d)\,,\label{eq:basiso}
\end{equation}
where the subcomplex of \emph{basic differential forms}
$\Omega(M)^G_{\mathrm{bas}}\subset\Omega(M,\R)$ is defined by
\begin{equation}\label{basdef}
\Omega(M)^G_{\mathrm{bas}}:=\{\sigma\in\Omega^*(M,\R)^G~|~\widetilde{X}\imult\sigma=0\}\,,
\end{equation}
\end{prop}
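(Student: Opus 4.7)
The plan is to prove this in three standard moves: establish that $\pi^\ast$ actually lands in the basic subcomplex, verify chain-map compatibility, and then prove bijectivity by a local argument reduced to the slice theorem.

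First I would verify that $\pi^\ast$ is well-defined into $\Omega(M)^G_{\mathrm{bas}}$. For any $\tau\in\Omega(M/G,\R)$, the form $\pi^\ast\tau$ is automatically $G$-invariant, since $\pi$ is $G$-invariant (the $G$-action on the base is trivial), and it vanishes on vertical vectors because $d\pi$ kills the fundamental vector fields $\widetilde{X}$, giving $\widetilde{X}\imult \pi^\ast\tau=0$ for every $X\in\g$. Chain-map compatibility $\pi^\ast d=d\pi^\ast$ is just naturality of the exterior derivative.

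Next I would prove bijectivity. For injectivity, suppose $\pi^\ast\tau=0$. Since $\pi$ is a surjective submersion, at each $[x]\in M/G$ every tangent vector admits a horizontal lift to $x\in M$, and the values of $\tau_{[x]}$ on tuples of such vectors are determined by $(\pi^\ast\tau)_x$; hence $\tau=0$. For surjectivity, given a basic invariant $\sigma\in\Omega(M)^G_{\mathrm{bas}}$, I would construct $\tau$ on $M/G$ pointwise by the formula
\begin{equation*}
\tau_{[x]}(v_1,\dots,v_k):=\sigma_x(\tilde v_1,\dots,\tilde v_k),
\end{equation*}
for any lifts $\tilde v_i\in T_xM$ of $v_i\in T_{[x]}(M/G)$. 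The basic condition makes this independent of the choice of lifts (differences are vertical, and $\sigma$ contracts to zero with them), and $G$-invariance of $\sigma$ makes it independent of the choice of representative $x$ in the orbit $[x]$. It is then straightforward that $\tau$ is smooth and that $\pi^\ast\tau=\sigma$.

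The main obstacle is dealing with the orbifold points where the isotropy $G_x$ is a nontrivial finite subgroup, so I would handle this by the slice theorem. Near such an orbit, $M$ is equivariantly modeled by $G\times_{G_x}S$ for a $G_x$-invariant slice $S$, and $M/G$ is locally the orbifold chart $S/G_x$. Under this identification the pullback $\pi^\ast$ becomes the inclusion of $G_x$-invariant forms on $S$ into the basic invariant forms on $G\times_{G_x}S$; the restriction map $\sigma\mapsto \mathrm{inc}_S^\ast\sigma$ furnishes the inverse, and $G_x$-invariance of $\mathrm{inc}_S^\ast\sigma$ follows from the $G$-invariance of $\sigma$. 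Gluing these local inverses via a $G$-invariant partition of unity on $M$ (equivalently, a partition of unity on $M/G$) produces the global preimage $\tau$ and completes the proof.
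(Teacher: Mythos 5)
Your argument is correct. The paper itself offers no proof of Proposition \ref{pi*iso}: it is stated as a standard fact (the classical identification of basic forms on a principal (orbi)bundle with forms on the quotient), so there is no in-paper argument to compare against; what you give is precisely the standard proof. Two small remarks. First, the pointwise formula for $\tau$ only makes literal sense where the isotropy is trivial, since at singular points $M/G$ has no honest tangent space; you do repair this with the slice model $G\times_{G_x}S$, where the key point (which you assert and is easy to check) is that a $G$-invariant horizontal form on $G\times_{G_x}S$ is determined by its restriction to the slice, using $G$-invariance to translate any point to the slice and horizontality to discard the vertical directions, so that restriction is a two-sided inverse of $\pi^*$ onto the $G_x$-invariant forms on $S$, i.e.\ onto the orbifold forms on the chart $S/G_x$. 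Second, the partition of unity is not needed: the local inverses all invert the same injective map $\pi^*$, hence agree on overlaps and glue automatically; using one is harmless but superfluous.
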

The cohomology of the complex \eqref{basdef} is called the
\emph{basic cohomology} of $M$, and Proposition \eqref{pi*iso}
shows that it is isomorphic to the de Rham cohomology $H(M/G)$
of the orbifold $M/G$. On the other hand, one has the
following fundamental notion in Chern-Weil theory,
which will be of crucial importance in this paper.

\begin{definition}\label{conndef}
A \emph{connection} for a locally free $G$-action on $M$ is a $\g$-valued
$1$-form $\Theta\in\Omega^1(M,\g)$ satisfying for all $X\in\g$
\begin{equation*}
\tilde X \imult \Theta=X\,.
\end{equation*}
\end{definition}

In what follows, we will mainly be interested in the case where $G=S^1$ is the circle group, which we realize as the subgroup of complex numbers of modulus $1$, inducing an identification
\begin{equation}\label{identfla}
\begin{split}
\g&\xrightarrow{\,\sim\,}\R\\
X&\longmapsto x\,,
\end{split}
\end{equation}
in such a way that $X\in\g$ exponentiates to $e^{2\pi i x}\in S^1\subset\C$.
This induces in turn an identification $\g^*\simeq\R$ of the dual of the Lie algebra with $\R$ and of the Lebesgue measure on the interval $[0,1]$ with the normalized Haar measure on $S^1$. Under this identification, Definition \ref{OmG} becomes
\begin{equation}\label{OmS1}
\Omega^*_{S^1}(M)=\Omega(M,S^\omega(\R))^{S^1}\,,
\end{equation}
the complex vector space of $S^1$-invariant differential forms
with values in entire analytic series of the variable $x\in\R$.
We will write $S^1$-equivariant differential forms
$\sigma(x)\in\Omega^*_{S^1}(M)$ with explicit dependence in the variable
$x\in\R$, when they are understood in the identification \eqref{OmS1},
so that they can actually be seen as functions of $x\in\R$ with values in
$S^1$-invariant differential forms. Furthermore, Definition \ref{conndef} of
a connection for
the $S^1$-action on $M$ becomes under this identification a $1$-form
$\Theta\in\Omega^1(M,\R)$ such that for any $X\in\g$ with image $x\in\R$
by \eqref{identfla}, we have
\begin{equation}\label{connfla}
\tilde X \imult \Theta=x\,.
\end{equation}
The following basic lemma is then a straightforward consequence of 
Proposition \ref{pi*iso} and Definition \ref{conndef} via the
identification \eqref{identfla}.
\begin{lem}\label{intXi=int}
Assume that the $S^1$-action on $M$ is locally free,
let $\Theta\in\Omega^1(M,\R)$ be an associated connection in the sense
of \eqref{connfla}, and
let $\sigma\in\Omega(M/S^1)$ be closed.
Then we have
\begin{equation}\label{intXi}
\int_M\pi^*\sigma\wedge\Theta=\int_{M/S^1}\sigma
\end{equation}
in terms of  the isomorphism
of complexes \eqref{eq:basiso}. In particular, the integral
on the left-hand side of \eqref{intXi}
only depends on the basic cohomology class of
$\sigma$.
\end{lem}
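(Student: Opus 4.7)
The plan is to prove the identity \eqref{intXi} by reducing to a local trivialization of the principal $S^1$-orbibundle $\pi:M\to M/S^1$ and invoking Fubini's theorem, and then to deduce the cohomological invariance by a short Stokes' argument.

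First I would observe that the defining condition \eqref{connfla} forces the restriction of $\Xi$ to each $S^1$-orbit to be the canonical $1$-form of total mass $1$: taking the generator $X\in\g$ corresponding to $x=1$ under \eqref{identfla}, the map $t\mapsto\exp(tX)\cdot m$ parametrizes the fiber through $m\in M$ once as $t$ runs through $[0,1]$, and $\tilde X\imult\Xi=1$ says precisely that $\Xi$ pulls back to $dt$ along this parametrization. Hence $\int_{\mathrm{fiber}}\Xi=1$, which matches the normalization of the Haar measure with the Lebesgue measure on $[0,1]$ fixed just before \eqref{JX=xJ}.

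Next, I would cover $M/S^1$ by open sets $U_\alpha$ over which $\pi$ admits (orbifold) sections, giving trivializations $\pi^{-1}(U_\alpha)\cong U_\alpha\times S^1$ with fiber coordinate $t\in[0,1]$. Over such a chart, by Proposition \ref{pi*iso} the basic form $\sigma$ satisfies $\sigma=\pi^*\tilde\sigma$, while the first step forces $\Xi=dt+\pi^*\eta_\alpha$ for some $\eta_\alpha\in\Omega^1(U_\alpha,\R)$. Expanding
\[
\sigma\wedge\Xi=\pi^*\tilde\sigma\wedge dt+\pi^*(\tilde\sigma\wedge\eta_\alpha),
\]
the second summand vanishes: without loss of generality $\sigma$ has degree $\dim M/S^1$ (else both sides of \eqref{intXi} are zero), making $\tilde\sigma\wedge\eta_\alpha$ a form of degree $\dim M/S^1+1$ on $U_\alpha$. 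Fubini applied to the first summand then yields $\int_{\pi^{-1}(U_\alpha)}\sigma\wedge\Xi=\int_{U_\alpha}\tilde\sigma\cdot\int_0^1 dt=\int_{U_\alpha}\tilde\sigma$, and a partition-of-unity argument subordinate to $\{U_\alpha\}$ assembles these local identities into \eqref{intXi}.

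For the cohomological statement, if $\sigma'$ is another closed basic form cohomologous to $\sigma$ via $\sigma-\sigma'=d\beta$ with $\beta\in\Omega(M)^{S^1}_{\mathrm{bas}}$, then $\tilde\sigma-\tilde{\sigma'}=d\tilde\beta$ by Proposition \ref{pi*iso}, so $\int_{M/S^1}(\tilde\sigma-\tilde{\sigma'})=0$ by Stokes' theorem on the compact orbifold $M/S^1$ (without boundary, as $M$ is compact). I do not foresee a serious obstacle; the only mild technical point is that when the $S^1$-action is merely locally free, $M/S^1$ is an orbifold, but fiber integration, Fubini, and Stokes' theorem all extend to that setting without modification.
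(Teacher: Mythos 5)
Your argument is correct and is essentially the proof the paper intends: the paper offers no separate proof, declaring the lemma a straightforward consequence of Proposition \ref{pi*iso} and Definition \ref{conndef}, and that straightforward argument is precisely your fiber-integration/Fubini computation combined with the normalization $\int_{\mathrm{fiber}}\Xi=1$ and a Stokes argument for the cohomological invariance. The one point to phrase more carefully is that at a point with nontrivial stabilizer $\Gamma_m$ the loop $t\mapsto\exp(tX)\cdot m$, $t\in[0,1]$, covers the orbit $|\Gamma_m|$ times rather than once, so the fiber integral of $\Xi$ over that orbit is $1/|\Gamma_m|$ --- which is exactly the factor built into the orbifold integral over $M/S^1$, as your closing remark on the locally free case implicitly uses.
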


\subsection{Dirac operators and invariant Riemann-Roch numbers}
\label{sec:2.3}
To introduce our proper setup, let us now focus on the case of a Hamiltonian $G$-action on
a compact connected symplectic manifold $(M,\omega)$ of dimension $2n$. Recall that a $G$-equivariant map
 $\J: M \to \g^\ast$ is called a \emph{moment map}
for such an  action if for all $X\in\g$,
the function $\J(X)\in\Cinft(M,\R)$ satisfies
\bq
d\J(X) =-\widetilde X \imult \omega\,,\label{eq:sign24}
\eq
where $d$ is the de Rham differential and $\imult$ denotes the contraction. By definition, a Hamiltonian $G$-action on a symplectic manifold
$(M,\omega)$ induces a locally free action on the level
set $\J^{-1}(\{0\})$ if and only if 
$0$ is a regular value of $\J:M\to\g^*$.  For $G=S^1$, under the identification \eqref{identfla}  the moment map
$\J:M\to\g^*$ corresponds to a function  $\J\in\Cinft(M,\R)$ which we
still call moment map,  such that for any $X\in\g$ with image $x\in\R$
by \eqref{identfla}, we have
\begin{equation}\label{JX=xJ}
\J(X)=x\J\in\Cinft(M,\R)\,.
\end{equation}

Next, let $(M,\omega)$ be endowed with
a so-called \emph{prequantizing} line bundle $(L,h^L,\nabla^L)$, so that $(L,h^L)$ is a Hermitian line bundle over $M$
with a Hermitian connection $\nabla^L$ whose curvature
$R^L\in\Omega^2(M,\C)$
satisfies the prequantization condition \eqref{preq}.
We denote by $\Cinft(M,L)$ the space of smooth sections of $L$. Let further $G$ be a connected compact
Lie group such that $G$ acts on $L$ over $M$, preserving
the Hermitian metric $h^L$ and the connection $\nabla^L$.
Such an action is called \emph{prequantized}, and the induced action
of $G$ on $(M,\omega)$ then preserves the symplectic form $\omega$. Following for instance \cite[(25)]{meinrenken96} adapted to our conventions,
there is a canonical
choice of moment map $\J: M \to \g^\ast$ for a prequantized action of $G$
on $(M,\omega)$ defined by the \emph{Kostant formula} 
\begin{equation}\label{Kostant}
\J(X)s:=\frac{i}{2\pi}\left(L_Xs-\nabla^{L}_{\widetilde X}s\right)
\end{equation}
for all $s\in\Cinft(M,L)$ and $X\in\g$, where $\widetilde X\in\Cinft(M,TM)$ denotes the fundamental vector field on $M$ associated with $X$ and $L_X$ denotes the Lie
derivative with respect to $X$ induced by the $G$-action.
This is the moment map which will underly all our considerations from now on.

Next, choose a $G$-invariant compatible almost complex structure $J\in\End(TM)$
over $(M,\omega)$,
inducing a splitting
\begin{equation}\label{splitJ}
TM\otimes\C=T^{(1,0)}M\oplus T^{(0,1)}M
\end{equation}
on the complexification $TM\otimes\C$ of $TM$ into the
eigenspaces of $J$ corresponding to the eigenvalues $i$ and $-i$, respectively.
Consider the total exterior product
\begin{equation}\label{spin}
\Lambda( T^{*(0,1)}M):=\bigoplus_{j=0}^n \Lambda^j( T^{*(0,1)}M)\,,
\end{equation}
where $T^{*(0,1)}M$ denotes the dual bundle of $T^{(0,1)}M$. 
For any $v\in TM$ with decomposition $v=v^{1,0}+v^{0,1}$ according to \eqref{splitJ}, we define its Clifford action on
$\alpha\in\Lambda( T^{*(0,1)}M)$ by
\begin{equation}\label{cliff}
c(v)\alpha:=\sqrt{2}\,(v^{1,0})^*\wedge\alpha-v^{0,1}\imult\alpha\,,
\end{equation}
where $(v^{1,0})^*$ denotes the metric dual of $v^{1,0}$ in $T^{*(0,1)}M$
with repect to the induced Hermitian metric $g^{TM}$ defined by
\eqref{gTX}. As explained for instance in \cite[Appendix D]{LH89},
the Clifford action \eqref{cliff} on $\Lambda( T^{*(0,1)}M)$ is associated
with the canonical \emph{$\Spin^c$ structure} of the almost Hermitian manifold
$(M,J,g^{TM})$, and there is an induced connection
$\nabla^{\Lambda( T^{*(0,1)}M)}$ on 
$\Lambda( T^{*(0,1)}M)$, which we call the \emph{Clifford connection}.
Following \cite[\S 1.3.1]{MM07}, the Clifford connection
$\nabla^{\Lambda( T^{*(0,1)}M)}$
is given in any local orthonormal frame
$\{e_j\}_{j=1}^n$ of $(TM,g^{TM})$
by the formula
\begin{equation*}
\nabla^{\Lambda( T^{*(0,1)}M)}
=d+\frac{1}{4}\sum_{j,k=1}^{2n}\<\Gamma^{TM} e_j,e_k\>_{g^{TM}}c(e_j)c(e_k)+\frac{1}{2}\Gamma^{\det}\,,
\end{equation*}
where $\Gamma^{TM}$ is the local connection form
of the Levi-Civita connection $\nabla^{TM}$ associated with $g^{TM}$
and $\Gamma^{\det}$ is the local connection form associated with
the connection on $\det(T^{(1,0)}M):=\Lambda^n( T^{(1,0)}M)$ induced
by $\nabla^{TM}$.  Finally, for any $m\in\N$, we write
$\Omega^{0,*}(M,L^m)$ for the space of smooth sections of
$\Lambda( T^{*(0,1)}M)\otimes L^m$, where $L^m$ denotes the $m$-th tensor power of $L$. We then have the following 

\begin{definition}
For any $m\in\N$, the \emph{Spin$^c$-Dirac operator} $D_m$
is defined in any local orthonormal frame $\{e_j\}_{j=1}^n$ of $(TM,g^{TM})$ by the formula
\begin{equation*}\label{spin^c}
D_m=\sum\limits_{j=1}^{2n} c(e_j)\nabla^{\Lambda( T^{*(0,1)}M)\otimes L^m}_{e_j}:\Omega^{0,*}(M,L^m)\longrightarrow\Omega^{0,*}(M,L^m)\,,
\end{equation*}
where $\nabla^{\Lambda( T^{*(0,1)}M)\otimes L^m}$ is the connection
on $\Lambda( T^{*(0,1)}M)\otimes L^m$
induced by $\nabla^{\Lambda( T^{*(0,1)}M)}$ and $\nabla^{L^m}$.
\end{definition}

By definition of the Clifford action \eqref{cliff}, the $\Spin^c$-Dirac
operator $D_m$ interchanges the space $\Omega^{0,+}(M,L^m)
\subset\Omega^{0,*}(M,L^m)$ of even-degree forms with the space
$\Omega^{0,-}(M,L^m)\subset\Omega^{0,*}(M,L^m)$
of odd-degree forms in the decomposition \eqref{spin}
of $\Lambda( T^{*(0,1)}M)$. We write $D_m^\pm$
 for the restriction of $D_m$ to $\Omega^{0,\pm}(M,L^m)$.
On the other hand, as shown for instance in \cite[Lemma 1.3.4]{MM07},
$D_m$ is a formally self-adjoint elliptic
operator on $\Omega^{0,*}(M,L^m)$ with respect to
the $\L^2$-Hermitian product induced by $g^{TM}$ and $h^L$.
In particular, it has finite-dimensional kernel inside
$\Omega^{0,*}(M,L^m)$, which allows us to state the following
definition.  

\begin{definition}\label{RRdef}
For any $m\in\N$, the \emph{Riemann-Roch number} $\text{RR}(M,L^m)\in\N$
of the bundle $L^m$ over $M$ is defined by the formula
\begin{equation*}
\text{RR}(M,L^m):=\dim\Ker D_m^+-\dim\Ker D_m^-\,.
\end{equation*}
\end{definition}
The standard invariance property of indices of elliptic operators
with respect to deformations shows that these Riemann-Roch numbers
do not depend on the choice of an almost complex
structure $J$ nor on $h^L$ and $\nabla^L$ satisfying the prequantization
condition \eqref{preq}.

On the other hand, recall  that the action of $G$ preserves all additional
data on $M$ and $L$ by construction, so that there is an
induced action of $G$ on
$\Omega^{0,*}(M,L^m)$ commuting with $D_m$. In particular,
this induces unitary representations of $G$ on the finite dimensional
Hermitian vector spaces $\Ker D_m^+$ and $\Ker D_m^-$. For any $g\in G$,
we write
\begin{equation*}\label{chi}
\chi^{(m)}(g):=\Tr_{\Ker D_m^+}[g]-\Tr_{\Ker D_m^-}[g]\,.
\end{equation*}
We write
$(\Ker D_m^+)^G$ and $(\Ker D_m^-)^G$ for the subspaces of $G$-invariant
vectors inside $\Ker D_m^+$ and $\Ker D_m^-$ respectively.

\begin{definition}\label{RRGeqdef}
For any $m\in\N$, the \emph{$G$-invariant Riemann-Roch number}
$\text{RR}^G(M,L^m)\in\N$
of the bundle $L^m$ over $M$ is defined by the formula
\begin{equation*}
\text{RR}^G(M,L^m):=\dim(\Ker D_m^+)^G-\dim(\Ker D_m^-)^G\,.
\end{equation*}
\end{definition}
Again by invariance of the index of elliptic operators, the
$G$-invariant Riemann-Roch number do not depend on the choice of a $G$-invariant almost complex
structure $J$ nor on the choice of $G$-invariant $h^L$ and $\nabla^L$
satisfying the prequantization condition \eqref{preq}.
Note also that we have
\begin{equation}\label{RRintG}
\begin{split}
\text{RR}^G(M,L^m)&=\int_G\Tr_{\Ker D_m^+}[g]\,dg-\int_G\Tr_{\Ker D_m^-}[g]\,dg=\int_G\,\chi^{(m)}(g)\,dg\,,
\end{split}
\end{equation}
where $dg$ is the normalized Haar volume form on $G$.

\subsection{Equivariant characteristic forms and Riemann-Roch formulas}
\label{sec:RRflas} 
In the setting of Section \ref{sec:2.3}, and following \cite[Definition 7.5]{berline-getzler-vergne},
we define for any Hermitian vector bundle $(E,h^E)\to M$ to which the $G$-action lifts and which is 
equipped with a $G$-equivariant Hermitian connection $\nabla^E$ the associated
\emph{moment map} $\J^{E}:M\to \End(E)\otimes\g^*$ via the Kostant formula
\begin{equation*}\label{KostantRiem}
\J^{E}(X):=L_X-\nabla^{E}_{\widetilde X}\,, \qquad X \in \g,
\end{equation*}
where $L_X$ denotes the Lie
derivative with respect to $X$ induced by the action of $G$ on $E$. Its \emph{equivariant curvature}
$R^{E}_\g(X)\in\Omega^*(M,\End(E))$ evaluated at $X\in\g$ is then given 
by the formula
\begin{equation*}\label{RTMeqdef}
R^{E}_\g(X):=R^{E}+2\pi i\J^{E}(X)\,,
\end{equation*}
where $R^{E}$ denotes the curvature of $\nabla^E$.
The associated \emph{equivariant Todd form} is the equivariantly closed form
$\Td_\g(E)\in\Omega^*_G(M)$ defined for all $X\in\g$ by the formula
\begin{equation}\label{Toddeq}
\Td_\g(E,X):=\det_{E}\left(\frac{R^{E}_\g(X)/2\pi i}
{\exp(R^{E}_\g(X)/2\pi i)-{\mathrm{Id}_{E}}}\right)\,.
\end{equation}
One readily checks that its cohomology class in $H^\omega_G(M)$ does not depend
on the choice of a Hermitian metric $h^E$ and connection $\nabla^E$.
In the important case of the tangent bundle $TM\to M$ equipped with
the chosen $G$-invariant
compatible almost complex structure $J\in\End(TX)$ over
$(M,\omega)$,
one can consider the induced Hermitian metric $g^{TM}$ given by Formula
\eqref{gTX} and the associated Levi-Civita connection $\nabla^{TM}$,
which are both $G$-equivariant.
This induces a Hermitian metric and connection
on $T^{(1,0)}M$ via the splitting \eqref{splitJ}, and the associated equivariant 
Todd form $\Td_\g(M)\in\Omega^*_G(M)$
given by $\Td_\g(M,X):=\Td_\g(T^{(1,0)}M,X)$
does not depend on the choice of the 
compatible almost
complex structure, and hence is an invariant of
the Hamiltonian action of $G$ on $(M,\omega)$.
The closed form $\Td(M):=\Td_\g(M,0)\in\Omega^*(M)$ is called
the \emph{Todd form} of $M$, and its
cohomology class is a symplectic invariant of $(M,\omega)$.

Recalling the prequantization condition \eqref{preq}, the \emph{equivariant Chern character} of $L^m$ 
is the equivariantly closed form $\ch_\g(L^m)\in\Omega^*_G(M)$
 defined for all $m\in\N$ and $X\in\g$ by the formula
\begin{equation}\label{ChernLm}
\ch_\g(L^m,X):=
\exp(m\omega +2\pi im\J(X))\,.
\end{equation}
The closed form $\ch(L^m):=\ch_\g(L^m,0)=e^{m\omega}\in\Omega^*(M)$ is called
the \emph{Chern character} of $L^m$.  We now have the following fundamental \emph{Kirillov formula},
which we present in a guise that can be found in \cite[Theorem 3.1 and Remark  (4) after Theorem 2.1]{meinrenken96}.
\begin{thm}\label{Kirfla}
{\cite[Theorem 8.2]{berline-getzler-vergne}}
For any $m\in\N$ and $X\in\g$ sufficiently close to $0$, we have
\begin{equation*}
\chi^{(m)}(\exp(X))=\int_M\,\Td_\g(M,X)\,\ch_\g(L^m,X)\,.
\end{equation*}
\end{thm}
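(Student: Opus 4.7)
The plan is to derive the formula from the equivariant Atiyah-Singer index theorem applied to the Spin$^c$ Dirac operator $D_m$, of which \cite[Theorem 8.2]{berline-getzler-vergne} is the precise statement for elements in the identity component of $G$. I would start from the McKean-Singer identity: for every $t > 0$,
\[
\chi^{(m)}(\exp X) = \Tr_s\bigl[\exp(X)\cdot e^{-t D_m^2}\bigr],
\]
the right-hand side being independent of $t$ by the usual commutator argument, combined with the fact that the $G$-action commutes with $D_m$. Here $\Tr_s$ denotes the supertrace with respect to the $\Z/2$-grading of $\Omega^{0,*}(M,L^m)$ into even and odd exterior degrees that defines $D_m^\pm$.

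Next I would express the supertrace as an integral over $M$:
\[
\Tr_s\bigl[\exp(X)\cdot e^{-t D_m^2}\bigr] = \int_M \tr_s\bigl[(\exp X)_y \cdot p_t^{(m)}((\exp X)^{-1} y,\, y)\bigr]\, dv_M(y),
\]
where $p_t^{(m)}$ is the Schwartz kernel of $e^{-t D_m^2}$, the subscript $s$ denotes the pointwise supertrace on $(\Lambda(T^{*(0,1)}M)\otimes L^m)_y$, and $(\exp X)_y$ denotes the linear action on the fiber at $y$. Passing to $t \to 0^+$, standard Gaussian heat-kernel bounds localize the limit near the zero set of the fundamental vector field $\widetilde X$; since both sides of the Kirillov formula are entire analytic in $X$, it suffices to establish the identity for $X$ in a neighborhood of $0$ and then appeal to analyticity for convergence in a suitable ball.

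The core step is Getzler's rescaling argument in its equivariant form. The Lichnerowicz formula for $D_m^2$, combined with Kostant's formula \eqref{Kostant}, which converts $L_X - \nabla^L_{\widetilde X}$ into $2\pi i\, \J(X)$, introduces the moment-map contribution into the heat equation. Rescaling the Clifford variables and normal coordinates by appropriate powers of $\sqrt{t}$ in Getzler's fashion, the rescaled heat operator converges to a generalized harmonic oscillator whose coefficients are precisely the equivariant curvatures $R^{TM}_\g(X)$ and $R^{L^m}_\g(X) = 2\pi i m\,(\omega + \J(X))$. Mehler's formula applied to this oscillator produces the pointwise limit
\[
\lim_{t \to 0^+} \tr_s\bigl[(\exp X)_y \cdot p_t^{(m)}((\exp X)^{-1} y,\, y)\bigr]\, dv_M(y) = \bigl[\Td_\g(M,X)\, \ch_\g(L^m,X)\bigr]_{\mathrm{top}},
\]
where $[\,\cdot\,]_{\mathrm{top}}$ extracts the top-degree component, viewed as a density; integrating over $M$ then yields the Kirillov formula.

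The main obstacle is the uniform control of the equivariant Getzler rescaling as $X$ ranges over a neighborhood of $0 \in \g$, which is exactly what is proved in \cite[Theorem 8.2]{berline-getzler-vergne} and can be cited directly. The only nontrivial bookkeeping is a verification of sign and normalization conventions: \eqref{preq}, the $2\pi i$ factor in \eqref{eq:dg}, and the sign convention in \eqref{eq:sign24} must combine through Kostant's formula \eqref{Kostant} so as to reproduce the equivariant Chern character \eqref{ChernLm} with the normalization $\ch_\g(L^m,X) = \exp\bigl(m\omega + 2\pi i m\, \J(X)\bigr)$, exactly as carried out in \cite[\S 3]{meinrenken96}.
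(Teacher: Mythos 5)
The paper does not actually prove this theorem: it is quoted directly from \cite[Theorem 8.2]{berline-getzler-vergne}, stated in Meinrenken's normalizations, with only the remark that it can alternatively be seen as a consequence of the Berline--Vergne localization formula applied to the equivariant Atiyah--Segal--Singer theorem (Theorem \ref{eqindfla}). Your proposal is, in substance, the same move -- cite \cite[Theorem 8.2]{berline-getzler-vergne} for the analytic core and check that the conventions \eqref{preq}, \eqref{eq:dg}, \eqref{eq:sign24}, \eqref{Kostant} reproduce \eqref{ChernLm}, as in Meinrenken -- so as a justification of the statement it is acceptable and consistent with the paper.

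One sentence in your sketch, however, would fail if taken literally. For a fixed group element $g=\exp(X)$ with $X\neq 0$, the small-time limit of $\tr_s\bigl[(\exp X)_y\, p_t^{(m)}((\exp X)^{-1}y,\,y)\bigr]$ is \emph{not} the equivariant density $\bigl[\Td_\g(M,X)\,\ch_\g(L^m,X)\bigr]_{\mathrm{top}}$: off the fixed point set $M^{\exp X}$ the off-diagonal heat kernel decays exponentially, so the pointwise limit there is zero, and the naive $t\to 0^+$ limit of the equivariant supertrace yields the localized Atiyah--Segal--Singer fixed point formula of Theorem \ref{eqindfla}, not the delocalized Kirillov formula. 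The delocalized statement requires incorporating $X$ into the operator (Bismut's Laplacian, i.e.\ a rescaling that is joint and uniform in $X$ and $t$, with the Kostant moment term entering the model harmonic oscillator), or equivalently passing from the fixed point formula to the integral over all of $M$ by the Berline--Vergne localization formula -- which is exactly the content of \cite[Theorem 8.2]{berline-getzler-vergne} that you defer to, and exactly the remark the paper makes after Theorem \ref{Kirfla}. Since you cite that theorem for the "uniform control'', your argument is not circularly broken, but the pointwise-limit claim should be removed or replaced by the Bismut-Laplacian formulation.
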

This result can be seen as a consequence of the
\emph{Berline-Vergne localization formula} \cite{berline-vergne82} applied to the
\emph{equivariant Atiyah-Segal-Singer index theorem}.
To state this last theorem,
consider for any $g\in G$ the fixed point set
\begin{equation*}
M^g:=\mklm{p \in M~|~g\cdot p =p}\subset M\,,
\end{equation*}
which is a smooth submanifold of $M$,
and write $\Sigma_{M^g}\to M^g$ for its normal bundle inside $TM$.
Let $R^{\Sigma_{M^g}}\in\Omega^2(F,\End(\Sigma_{M^g}))$
be the curvature of the connection on $\Sigma_{M^g}$ induced 
by the Levi-Civita connection of $g^{TM}$. The following form of the equivariant index theorem can also be found in
\cite[Theorem 3.1 and Remark  (4) after Theorem 2.1]{meinrenken96}.

\begin{thm}\label{eqindfla}
{\cite{AS68}}
For any $m\in\N$ and $g\in G$, we have
\begin{equation*}
\chi^{(m)}(g)=\int_{M^g}\,
\frac{\Td(M^g)\,\Tr_{L^m}[g^{-1}]\exp(-m R^{L}/2\pi i)}
{\det_{\Sigma_{M^g}}(1-g\exp(R^{\Sigma_{M^g}}/2\pi i))}.
\end{equation*}
\end{thm}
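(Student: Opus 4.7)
The approach is via the equivariant McKean–Singer formula together with heat-kernel localization at the fixed point set. By Definition \ref{RRGeqdef} and the ellipticity and $G$-equivariance of $D_m$, for every $t > 0$ one has
\begin{equation*}
\chi^{(m)}(g) \;=\; \Tr_s\!\left[g \cdot e^{-t D_m^2}\right],
\end{equation*}
where $\Tr_s$ denotes the supertrace on $\Omega^{0,*}(M,L^m)$ with respect to the $\Z/2$-grading coming from \eqref{spin}. The right-hand side is independent of $t$, so one is free to pass to the limit $t \to 0^+$. Since the heat kernel $k_t(x,y)$ of $e^{-tD_m^2}$ satisfies Gaussian off-diagonal decay at scale $\sqrt{t}$, the contribution of any point $x$ with $g \cdot x \neq x$ decays exponentially fast, and only an arbitrarily small tubular neighborhood of the fixed point set $M^g$ contributes in the limit.

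Inside such a neighborhood one applies the Getzler rescaling adapted to equivariant Clifford modules (see \cite{berline-getzler-vergne}), which identifies the pointwise small-time limit of the supertrace density with an explicit characteristic form on $M^g$. Under the splitting $TM|_{M^g} = TM^g \oplus \Sigma_{M^g}$ into $g$-fixed and $g$-moving directions, the Getzler limit factorizes into three contributions. The $TM^g$ direction reproduces the Todd class $\Td(M^g)$, as in the non-equivariant Atiyah–Singer theorem for the $\Spin^c$-Dirac operator on $M^g$. The twist by $L^m$ contributes the restriction of the equivariant Chern character of $L^m$ to $M^g$, which evaluates to $\Tr_{L^m}[g^{-1}] \exp(-m R^L / 2\pi i)$: the scalar trace records the fibrewise $g$-action on $L^m$ at each fixed point (compatibly with the Kostant formula \eqref{Kostant}), while the exponential is the Chern–Weil Chern character of $L^m$ with the sign dictated by \eqref{Toddeq} and \eqref{ChernLm}, since $R^L = -2\pi i\, \omega$ via \eqref{preq} makes $\exp(-m R^L/2\pi i) = e^{m\omega}$. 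Finally, the normal directions $\Sigma_{M^g}$ yield the denominator $\det_{\Sigma_{M^g}}(1 - g \exp(R^{\Sigma_{M^g}}/2\pi i))$, which arises from the nonzero algebraic trace of $g$ on the normal Clifford spinors combined with the Getzler limit of the normal curvature.

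The statement is a direct specialization of the general Atiyah–Segal–Singer equivariant index theorem from \cite{AS68}, so no new geometric input is required; the main obstacle is not conceptual but bookkeeping — carefully aligning signs, orientations, and $2\pi i$ normalizations with the conventions fixed in \eqref{eq:dg}, \eqref{Toddeq}, \eqref{ChernLm}, and with Meinrenken's conventions \cite{meinrenken96} used throughout the paper. An alternative route via the Atiyah–Segal localization theorem in equivariant $K$-theory bypasses the heat-kernel analysis but still reduces, after applying the equivariant Chern character, to the same local identification of the three factors above.
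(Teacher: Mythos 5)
Your outline is sound, but note that the paper does not prove this statement at all: Theorem \ref{eqindfla} is quoted as a known result, cited to \cite{AS68} and presented in the guise of \cite[Theorem 3.1 and Remark (4) after Theorem 2.1]{meinrenken96}, and it is only \emph{used} (together with the Kirillov formula of Theorem \ref{Kirfla}) in Proposition \ref{RRasyprop}. What you have written is a sketch of the standard heat-kernel proof in the style of \cite{berline-getzler-vergne}: equivariant McKean--Singer, Gaussian off-diagonal decay localizing the supertrace at $M^g$, and Getzler rescaling splitting the local density into the $\Td(M^g)$ factor, the twist contribution $\Tr_{L^m}[g^{-1}]\exp(-mR^L/2\pi i)=\Tr_{L^m}[g^{-1}]\,e^{m\omega}$ (your use of \eqref{preq} here is correct), and the normal-bundle denominator $\det_{\Sigma_{M^g}}(1-g\exp(R^{\Sigma_{M^g}}/2\pi i))$. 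That is a legitimate and standard route, and your closing remark that the $K$-theoretic Atiyah--Segal localization argument gives an alternative is in fact the original route of \cite{AS68}. The only caveats are that your argument is a sketch rather than a proof --- the genuinely hard step, the equivariant Getzler rescaling and the resulting Mehler-type computation in the normal directions, is entirely delegated to \cite{berline-getzler-vergne} --- and that the placement of $g$ versus $g^{-1}$ in numerator and denominator is exactly the convention-sensitive point you flag; since the paper fixes Meinrenken's conventions, verifying that matching would be the substantive content if one wanted a self-contained proof. As a review of a cited classical theorem, however, your proposal is appropriate and contains no incorrect step.
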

Note that either Theorems \eqref{Kirfla} and \eqref{eqindfla}
immediately imply  the classical
Hirzebruch-Riemann-Roch formula \eqref{HRRfla} for the usual
Riemann-Roch numbers
of Definition \ref{RRdef}.

\section{Geometry of the zero level set of the moment map}
\label{sec:sympquot}

Let $(M,\omega)$ be a compact symplectic manifold  of dimension $2n$ on which $G=S^1$ acts in a Hamiltonian fashion with moment map 
 $\J: M \to \g^\ast\simeq \R$.  In this section,  we will study the geometry of $\J^{-1}(\{0\})$ using a local normal form theorem for the $S^1$-action, and introduce corresponding retractions that will be needed later for the implementation of homotopy arguments.  We begin by introducing some general notation that will be frequently used.
\begin{notat}\label{def:F}
\emph{We denote by $\F$ the set of all connected components of the fixed point set
$M^{S^1}\subset M$. Given $F\in \F$, we write $\J(F)\in \R$ for the constant value of the moment map $\J:M\to \R$ on $F$. Furthermore, we introduce
the following subsets of
$\F$:
\begin{align*}
\F_+&:=\{F\in \F\,|\, \J\geq \J(F)\text{ near }F\},&
\F_-&:=\{F\in \F\,|\, \J\leq \J(F)\text{ near }F\},\\
\F_\mathrm{def}&:=\F_+\cup \F_-, &\F_\mathrm{indef}&:=\F\setminus \F_\mathrm{def}.
\end{align*}
We say that $F\in\F$ is \emph{positive definite} if $F\in \F_+$, \emph{negative definite} if $F\in \F_-$, \emph{definite} if $F\in\F_\mathrm{def}$
and \emph{indefinite} if $F\in\F_\mathrm{indef}$.}
For the connected components $F\in \F$ in $\J^{-1}(\{0\})$ satisfying  $\J(F)=0$ we set
$$
\F^0:=\{F\in \F\,|\, F\subset \J^{-1}(\{0\})\},\qquad \F^0_\pm:=\F^0\cap \F_\pm,\qquad\F_\mathrm{def/indef}^0:=\F^0\cap\F_\mathrm{def/indef}\,.
$$
\end{notat}

\subsection{Local normal form theorem}\label{sec:stratif} Let $F\in \F$ and note that $F$ is a symplectic submanifold of $M$. When considering a fiber bundle over $F$ with total space $E$, we will use the notation $\nu_E:E\to F$ for its bundle projection. Let $\nu_{\Sigma_F}:\Sigma_F\to F$ be the symplectic normal bundle of $F$ inside
$TM$, so that we have a decomposition
\begin{equation}\label{TMFdec}
(TM,\omega)|_{F}=(TF,\omega_{F})\oplus (\Sigma_F,\omega^{\perp})
\end{equation}
into symplectic vector bundles, where $\omega_F:=\mathrm{inc}_F^\ast\,\omega\in\Omega^2(F)$ and $\omega^\perp$ is the fibrewise restriction of $\omega$ to
$\Sigma_F$.  Note that the action of $S^1$ on $(M,\omega)$ induces a fiberwise
action on $\Sigma_F$, and choose an
$S^1$-invariant compatible complex structure
$J_{\Sigma_F}\in\End(\Sigma_F)$ on $(\Sigma_F,\omega^{\perp})$.
The Formula \eqref{gTX} then induces a Hermitian
metric $g_{\Sigma_F}:=\omega^{\perp}(\cdot,J_{\Sigma_F}\cdot)$ on
the complex bundle $(\Sigma_F,J_{\Sigma_F})$, making $\Sigma_F$ into a Hermitian
vector bundle over $F$.  For any $k\in\N$ and under the identification
\eqref{identfla} we write
\begin{equation*}
\Sigma_F^{(k)}:=\left\{v\in\Sigma_F~\big|~\exp(X)\cdot v=e^{2\pi ikx}v\,\text{ for all }
X\in\g\right\} \label{eq:SigmakF}
\end{equation*}
for the associated isotypic component, as well as 
$W:=\{k\in\Z~|~\Sigma_F^{(k)}\neq 0\}\subset\Z$ for the set of weights,
which is finite and does not contain $0$ by definition of $\Sigma_F$,
and let $\ell_F^{(k)}:=\mathrm{rk}_\C\,\Sigma_F^{(k)}$ be the complex rank of the vector bundle $\Sigma_F^{(k)}$.
We thus have a finite decomposition
\begin{equation}\label{eq:splitting111k}
\Sigma_F=:\bigoplus_{k\in W}\Sigma_F^{(k)}
\end{equation}
into hermitian subbundles, with associated structure group
\begin{equation*}
K_F:=\prod_{k\in W}\,\mathrm{U}\big(\ell_F^{(k)}\big)\,,
\end{equation*}
so that there is a principal $K_F$-bundle $\nu_{P_F}:P_F \rightarrow F$
satisfying
\begin{equation}\label{sigmaprbdle}
\Sigma_F\cong P_F\times_{K_F}\bigoplus_{k\in W}\C^{\ell_F^{(k)}}\,,
\end{equation}
and inducing the decomposition \eqref{eq:splitting111k},
where $K_F$ acts linearly on
$\bigoplus_{k\in W}\C^{\ell_F^{(k)}}$. The diagonal action of $S^1$ on
$\bigoplus_{k\in W}\C^{\ell_F^{(k)}}$ of weight $k\in W$ on each summand
$\C^{\ell_F^{(k)}}$ commutes with the action of $K_F$, inducing
an $S^1$-action on the right-hand side of \eqref{sigmaprbdle}, which makes it an
$S^1$-equivariant identification.
Note also that the symplectic structure on  $\Sigma_F$ is induced by the standard
complex structure on each $\C^{\ell_F^{(k)}}$ via this decomposition.
In what follows, we will write $W_\pm :=\{w\in W\,|\,\pm w\in\N\}$ for the sets of positive and negative weights, respectively, so that  by \eqref{eq:splitting111k} there is a decomposition
\begin{equation}
\Sigma_F=\Sigma_F^+\oplus\Sigma_F^-,\label{eq:sigmapmdecomp}
\end{equation}
where
$\Sigma_F^\pm:=\bigoplus_{k\in W_\pm}\Sigma_F^{(k)}$. Setting
$
\ell^\pm_F:=\mathrm{rk}_\C\,\Sigma_F^\pm =\sum_{k\in W_\pm }\ell_F^{(k)}
$,
the decomposition \eqref{eq:sigmapmdecomp} is induced by the natural embedding
$K_F\subset U(\ell^+_F)\times U(\ell^-_F)$.
We write $\C^{\ell^\pm_F}=\bigoplus_{k\in W_\pm}\C^{\ell_F^{(k)}}$ and
set $\ell_F:=\mathrm{rk}_\C\,\Sigma_F$.

Let $\omega_{\Sigma_F}$ be a symplectic form on a neighborhood $V_F$ of the zero section in $\Sigma_F$ such that its restriction to the fibers of $\Sigma_F$ coincides with the standard symplectic
form induced by $\omega^{\perp}$, while
the restriction $(T\Sigma_F,\omega_{\Sigma_F})|_F$
to the zero section $F\subset\Sigma_F$
coincides with $(TM,\omega)|_F$ via the natural identification of
$TM$ with $T\Sigma_F$ over $F$.
The next proposition gives a canonical description of the moment map $\J:M\to\R$
in a neighborhood of each $F \in \F$, and constitutes the basis for the computations of the next sections.
It is  a straightforward consequence of the equivariant Darboux lemma, 
as explained for instance in \cite[Section 2.2]{guillemin-lerman-sternberg}.

\begin{prop}[\bf Local normal form theorem]\label{prop:localnormform}
For each $F\in \F$ there is an $S^1$-equivariant
symplectomorphism $\Phi_F:V_F\rightarrow U_F$ from the open tubular neighborhood $(V_F,\omega_{\Sigma_F})$ of the zero section in $\Sigma_F$ onto an open
neighborhood $U_F\subset (M,\omega)$ of $F$, such that
\bqn
\label{eq:30.06.2018}
\J \circ \Phi_F([\wp,w])=  \frac 12 Q_F(w) + \J(F),\quad\text{for all}\quad
[\wp,w]\in \Sigma_F\cong P_F\times_{K_F}\bigoplus_{k\in W}\C^{\ell_F^{(k)}},
\eqn
where  $Q_F$ is the $K_F$-invariant quadratic form on $\bigoplus_{k\in W}\C^{\ell_F^{(k)}}$  defined by
\bq
Q_F(w):=\sum_{k\in W}\,k\,\norm{\pi_k(w)}^2.  
\label{eq:QF}
\eq
\end{prop}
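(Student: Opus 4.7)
The plan is a two-step construction: first build an $S^1$-equivariant diffeomorphism from a tubular neighborhood of the zero section of $\Sigma_F$ onto a neighborhood of $F$ in $M$ whose differential along $F$ identifies the model symplectic form $\omega_{\Sigma_F}$ with $\omega$ pointwise, then deform it into an $S^1$-equivariant symplectomorphism via the equivariant Moser trick, and finally identify the pulled-back moment map $\J\circ\Phi_F$ with $\tfrac{1}{2}Q_F+\J(F)$.

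For the first step, I pick an $S^1$-invariant Riemannian metric on $M$ (for instance the one associated with any $S^1$-invariant compatible almost complex structure via \eqref{gTX}) and use its normal exponential map along $F$. This produces, on a small open tubular neighborhood $V_F'\subset\Sigma_F$ of the zero section, an $S^1$-equivariant diffeomorphism $\Psi_F:V_F'\to U_F'$ onto an open neighborhood $U_F'\subset M$ of $F$, with $\Psi_F|_F=\mathrm{id}_F$ and $d\Psi_F$ along $F$ the canonical isomorphism $T\Sigma_F|_F\cong TF\oplus\Sigma_F\cong TM|_F$ from \eqref{TMFdec}. By construction of $\omega_{\Sigma_F}$ via \eqref{sympSigmaF}, \eqref{omstdsigmaFdef} and \eqref{eq:relomegastdomegavert}, the closed $S^1$-invariant $2$-forms $\Psi_F^*\omega$ and $\omega_{\Sigma_F}$ then agree on $T\Sigma_F|_F$. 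After shrinking, the affine family $\omega_t:=(1-t)\omega_{\Sigma_F}+t\Psi_F^*\omega$ consists of $S^1$-invariant symplectic forms on some $V_F\subset V_F'$. Applying the equivariant relative Poincar\'e lemma fiberwise along the radial scaling of $\Sigma_F$ and averaging over $S^1$, I write $\Psi_F^*\omega-\omega_{\Sigma_F}=d\beta$ with $\beta\in\Omega^1(V_F,\R)^{S^1}$ vanishing along $F$. Defining $X_t$ by $X_t\imult\omega_t=-\beta$ yields an $S^1$-invariant time-dependent vector field that vanishes along $F$, so its flow $\psi_t$ is defined on a neighborhood of $F$, fixes $F$ pointwise, and the standard Moser computation gives $\psi_1^*\Psi_F^*\omega=\omega_{\Sigma_F}$; setting $\Phi_F:=\Psi_F\circ\psi_1$ produces the desired $S^1$-equivariant symplectomorphism.

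To identify the moment map, both $\J\circ\Phi_F$ and $\tfrac{1}{2}Q_F+\J(F)$ will be moment maps for the same $S^1$-action on the connected manifold $(V_F,\omega_{\Sigma_F})$, hence will coincide as soon as they agree on $F$, which they do since $Q_F(0)=0$. It remains to check that $\tfrac{1}{2}Q_F$ is a moment map: since the $S^1$-action fixes $F$, the fundamental vector field $\widetilde{X}$ is vertical for $\nu_{\Sigma_F}$, so $\widetilde{X}\imult\nu_{\Sigma_F}^*\omega_F=0$ and $\widetilde{X}\imult\omega_{\Sigma_F}=\widetilde{X}\imult\omega_{\mathrm{vert}}$. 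By \eqref{eq:relomegastdomegavert}, on each fiber $\omega_{\mathrm{vert}}$ reduces to the standard symplectic form $\sum_{k\in W}\omega_{(k)}$ on $\bigoplus_{k\in W}\C^{l_F^{(k)}}$, and a direct computation using \eqref{eq:connform0} together with $\omega_{(k)}=\tfrac{1}{2}d\alpha_{(k)}$ shows that $w\mapsto\tfrac{k}{2}\|\pi_k(w)\|^2$ is a moment map for the weight-$k$ $S^1$-action on $\C^{l_F^{(k)}}$; summing over $k\in W$ and invoking the $K_F$-invariance of $Q_F$ (which makes it well defined on the associated bundle \eqref{sigmaprbdle}) produces $\tfrac{1}{2}Q_F$.

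The main obstacle is the equivariant Moser step, specifically arranging that the primitive $\beta$ is simultaneously $S^1$-invariant, vanishes along $F$, and is defined on an open set on which the Moser flow $\psi_t$ exists for all $t\in[0,1]$; this is handled by building $\beta$ via the de Rham homotopy operator for the fiberwise scaling of $\Sigma_F$ (which preserves $F$ and commutes with the $S^1$-action), then averaging over $S^1$, and finally shrinking the neighborhood once to ensure the symplectic condition for $\omega_t$ throughout $t\in[0,1]$.
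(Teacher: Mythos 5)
Your proposal is correct and is essentially the approach the paper takes: the paper disposes of this proposition by invoking the equivariant Darboux lemma (citing Guillemin--Lerman--Sternberg, Section 2.2), and your construction --- an $S^1$-invariant normal exponential chart identifying $\omega$ with $\omega_{\Sigma_F}$ along $F$, an invariant Moser deformation with a primitive produced by the fiberwise radial homotopy, and then the identification of $\J\circ\Phi_F$ with $\tfrac12 Q_F+\J(F)$ via uniqueness of $S^1$-moment maps up to an additive constant on the connected neighborhood $V_F$ --- is precisely the standard proof of that lemma. The only step you state a bit too quickly is the verification that $\tfrac12 Q_F$ is a Hamiltonian for the action with respect to $\omega_{\Sigma_F}$ on the whole total space rather than just fiberwise: one also needs $\widetilde X\imult\omega_{\mathrm{vert}}$ to have no horizontal component, which follows in one line from Cartan's formula, the $S^1$-invariance of the forms $\theta_k$, and the fact that $\theta_k(\widetilde X)=x\,k\,f_k$ with $f_k$ constant along the horizontal distribution of the chosen $K_F$-connection, so the identity indeed holds in all directions.
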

Following Notation \ref{def:F}, a connected component $F\in \F$ is positive definite, negative definite or indefinite if and only if
the quadratic form $Q_F$ from Proposition \ref{prop:localnormform}
is positive definite, negative definite or indefinite,
respectively. We write
\begin{equation}\label{ZFdef}
Z_F:=\{[\wp,w]\in \Sigma_F\,|\,w\neq 0\text{ and } Q_F(w)=0\}\subset\Sigma_F
\end{equation}
for the submanifold formed fiberwise by the smooth points of the $0$-level set of the quadratic form $Q_F$. 
Recalling the stratification \eqref{stratiffla}, the set $Z_F$
then corresponds to the regular stratum of $J^{-1}(\{0\})$
in the local normal coordinates of Proposition \ref{prop:localnormform}, since
\begin{equation}
\Phi_{F}(V_F\cap Z_F)=U_F\cap \J^{-1}(\{0\})^\mathrm{reg}.
\label{eq:diagram_withPhiF}
\end{equation}

\subsection{Local model}\label{sec:elementary} In this section,
we introduce for each $F\in \F$ coordinates on
$\bigoplus_{k\in W}\C^{\ell_F^{(k)}}$ with respect to the zero level set of the quadric $Q_F$, which will be extended in
Section \ref{sec:applicationtonormal} to the symplectic normal bundle using the framework of the previous section. We will consider
$\bigoplus_{k\in W}\C^{\ell_F^{(k)}}$ as equipped
with the diagonal $S^1$-action of weight $k\in W$ on each summand $\C^{\ell_F^{(k)}}$.   To begin, we define the sets
\begin{align}\label{Cbulletdef}
\begin{split}
\C^{\ell_F^\pm}_{\bullet}&:= \C^{\ell_F^\pm}\setminus \{0\}, \qquad\qquad\qquad\qquad\text{in case}\quad \ell_F^\pm>0,\ell_F^\mp=0, \\
\C^{\ell_F^+,\ell_F^-}_{\bullet\bullet}&:=(\C^{\ell_F^+}\setminus \{0\})\times (\C^{\ell_F^-}\setminus \{0\}),\quad\text{in case}\quad \ell_F^+,\ell_F^->0.
\end{split}
\end{align}
Further, for any $k_0\in W$ we write
\bqn
\pi_{k_0}:\bigoplus_{k\in W}\C^{\ell_F^{(k)}}\to\C^{\ell_F^{(k_0)}}\label{eq:pik0}
\eqn 
for the canonical
projection, and introduce the weighted Euclidean norms 
\bq
 \norm{w}_{F\pm}^2:=\pm
 \sum_{k\in W^\pm}\,k\,\norm{\pi_k(w)}^2,
 \quad\text{for all}\quad w\in \C^{\ell_F^\pm}=\bigoplus_{k\in W^\pm}\C^{\ell_F^{(k)}}.\label{eq:normsF}
 \eq
 We then introduce the ellipsoids
\begin{equation}\label{ellipsoid}
 S^{2\ell_F^\pm-1}_\pm:=\big\{w \in\C^{\ell_F^\pm} \mid\; \norm{w}_{F\pm}=1 \big\},\quad\text{when}\quad\ell_F^\pm>0\,,
\end{equation}
 Note that the norms $\norm{\cdot}_{F\pm}$ are both $S^1$- and $K_F$-invariant.
The quadratic form $Q_F$ from \eqref{eq:QF} can be written in norm notation as
\bqn
Q_F(w)=\norm{w^+}_{F+}^2-\norm{w^-}_{F-}^2,\quad\text{for all}\quad w=(w^+,w^-)\in \C^{\ell_F^++\ell_F^-}.\label{eq:QFnorms}
\eqn
Later, it will also be convenient to consider the weighted Hermitian norm 
\bq
\norm{w}_F^2:=\norm{w^+}_{F+}^2+\norm{w^-}_{F-}^2,\quad\text{for all}\quad w=(w^+,w^-)\in \C^{\ell_F^++\ell_F^-}.\label{eq:Fnorm}
\eq
\smallskip
 Next, for each weight $k\in W$,
we  introduce the following standard symplectic and angular forms over
$\C^{\ell_F^{(k)}}$ for any  $w=(w_1'+iw_1'',\cdots, w_{\ell_F^{(k)}}'+iw_{\ell_F^{(k)}}'')
\in\C^{\ell_F^{(k)}}$ by setting
\bq
\omega_{(k)}|_w:=\sum_{j=1}^{\ell_F^{(k)}}\,dw'_j\wedge dw''_j,\qquad
\alpha_{(k)}|_w:=\sum_{j=1}^{\ell_F^{(k)}}\,w'_j \d w''_j-w''_j \d w'_j\,. \label{eq:omega0alpha0}
\eq
They are related by $\omega_{(k)}=\frac{1}{2}d\alpha_{(k)}$.
For any $X\in\g$ inducing the fundamental vector field
$\tilde X\in\Cinft(\C^{\ell_F^{(k)}},T\C^{\ell_F^{(k)}})$
associated with the $S^1$-action on $\C^{\ell_F^{(k)}}$ of weight $k\in W$ 
we then get
\bq
(\tilde X \imult \alpha_{(k)})|_w = k\, x\, \norm{w}^2,\quad\text{for all}\quad
w\in \C^{\ell_F^{(k)}}\,, \label{eq:connform0}
\eq
where $x\in\R$ is the image of $X\in\g$ via the identification \eqref{identfla}. If we now introduce the  $1$-forms 
\begin{equation}\label{eq:alphapm}
\alpha^{\pm}:=\pm\sum_{k\in W^{\pm}} \pi_k^*\alpha_{(k)}
\in\Omega^1(\C^{\ell_F^\pm},\R))\,,
\end{equation}
then \eqref{eq:connform0}  and \eqref{eq:normsF}  imply 
that the $1$-forms  defined by 
\begin{equation}\label{connCdef}
\theta^\pm|_w:=\frac{\alpha^{\pm} |_w}{ \norm{w}_{F\pm}^2}, \qquad  \text{if } w \in \C^{\ell_F^\pm}_{\bullet},
\end{equation}
\begin{equation}\label{connC}
\theta|_w:=\frac{1}{2}\left(\frac{\alpha^{+}|_w}{ \norm{w}_{F+}^2}+\frac{\alpha^{-}|_w}{ \norm{w}_{F-}^2}\right), \qquad \text{if } w \in \C^{\ell_F^+,\ell_F^-}_{\bullet\bullet},
\end{equation}
are connection forms in the sense of Formula \eqref{connfla} for the diagonal $S^1$-action of weight $k\in W$ on the $k$-th summand on $\C^{\ell_F^\pm}_{\bullet}$ and $\C^{\ell_F^+,\ell_F^-}_{\bullet\bullet}$ respectively defined in \eqref{Cbulletdef}. 

The following simple lemma  will be crucial for all our local and global considerations. Recall the general notation  for inclusions and projections introduced at the beginning of Section \ref{sec:2}. 
\begin{lem}\label{lem:fundamentaldefinite}  Let $S\subset \bigoplus_{k\in W}\C^{\ell_F^{(k)}}$ be an embedded smooth submanifold, $f:(0,\infty)\to (0,\infty)$
a smooth function,  and consider the smooth map
$\Psi_{S,f}:S\times(0,\infty)\to\bigoplus_{k\in W}\C^{\ell_F^{(k)}}$, $(w,r)\mapsto f(r)w$. Then one has for each $k\in W$
\bqn
\Psi_{S,f}^\ast (\pi_k^*\alpha_{(k)}) =f(r)^2\, \mathrm{pr}_S^\ast
\mathrm{inc}^\ast_S(\pi_k^*\alpha_{(k)})\,.\label{eq:Psiastalphadef}
\eqn
\end{lem}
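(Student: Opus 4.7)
The plan is to exploit two structural features of the 1-form $\alpha = \alpha^+ + \alpha^-$ on $\C^{\ell_F}$ that follow directly from the defining formula \eqref{eq:omega0alpha0}: first, at every point $v \in \C^{\ell_F}$, the covector $\alpha_v$ has coefficients that are \emph{linear} in the coordinates of $v$, so the map $(v,\xi)\mapsto \alpha_v(\xi)$ is bilinear on $\C^{\ell_F}\times T_v\C^{\ell_F} \cong \C^{\ell_F}\times \C^{\ell_F}$; second, $\alpha$ is annihilated by contraction with the Euler (radial) vector field, i.e.\ $\alpha_v(v)=0$ for every $v$, as one sees from the cancellation
\begin{equation*}
\alpha_{(k)}|_v(v) = \sum_{j=1}^{l_F^{(k)}}\bigl(v_j'v_j''-v_j''v_j'\bigr)=0
\end{equation*}
on each summand $\C^{l_F^{(k)}}$.

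Next I would compute the differential of $\Psi_{S,f}$ pointwise. For $(w,r)\in S\times(0,\infty)$ and a tangent vector $(\zeta,t)\in T_wS\oplus \R$, we have
\begin{equation*}
d\Psi_{S,f}|_{(w,r)}(\zeta,t)=f(r)\,\zeta + t\,f'(r)\,w \;\in\; T_{f(r)w}\C^{\ell_F},
\end{equation*}
after identifying $T_{f(r)w}\C^{\ell_F}$ with $\C^{\ell_F}$ in the usual way.

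Substituting this into $\alpha$ at the basepoint $f(r)w$ and invoking the two structural properties above, bilinearity gives
\begin{equation*}
\alpha_{f(r)w}\bigl(f(r)\zeta+tf'(r)w\bigr)
= f(r)^2\,\alpha_w(\zeta) + f(r)f'(r)\,t\,\alpha_w(w),
\end{equation*}
and the second term vanishes by the Euler identity $\alpha_w(w)=0$. Hence
\begin{equation*}
(\Psi_{S,f}^\ast\alpha)_{(w,r)}(\zeta,t)=f(r)^2\,\alpha_w(\zeta).
\end{equation*}

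Finally, I would recognize the right hand side: since $d\mathrm{pr}_S(\zeta,t)=\zeta$ and since $\zeta\in T_wS$ may be viewed in $T_w\C^{\ell_F}$ via $d\,\mathrm{inc}_S$, one has $(\mathrm{pr}_S^\ast\mathrm{inc}_S^\ast\alpha)_{(w,r)}(\zeta,t)=\alpha_w(\zeta)$, which gives the claimed identity. There is no serious obstacle here; the only thing that needs care is keeping track of the fact that the linearity of $\alpha_v$ in the base variable $v$ is what produces the factor $f(r)^2$ rather than $f(r)$, and that the would-be extra term involving $f'(r)$ is killed precisely by the Euler contraction, which is the geometric reason the statement has such a clean form.
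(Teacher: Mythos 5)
Your proof is correct and follows essentially the same route as the paper's: both compute the pushforward of a tangent vector pointwise, split it into a tangential part scaled by $f(r)$ and a radial part proportional to $w$, kill the radial contribution via the Euler identity $\alpha_w(w)=0$, and obtain the factor $f(r)^2$ from the combination of this scaling with the linearity of the coefficients of $\alpha$ in the base point. The only cosmetic difference is that the paper computes $(\Psi_{S,f})_\ast$ via a curve $\gamma_v(t)=(w(t),r(t))$ whereas you write the differential directly, which is the same calculation.
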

\begin{proof}
Let $v\in T_{y}(S  \times(0,\infty))$ be a tangent vector at $y:=(w, r)\in S  \times(0,\infty)$ and $\gamma_v(t)=(w(t), r(t))$ a smooth curve with $\dot \gamma_v(0)=v$, $\gamma_v(0)=y$. Identifying $T_{y}(S  \times(0,\infty))$ with a subspace of $\bigoplus_{k\in W}\C^{\ell_F^{(k)}}\times \R$ we get
\bqn 
(\Psi_{S,f})_{\ast}(v)=\frac d {dt} (f(r(t)) w(t))|_{t=0}=\frac d {dt} f(r(t))|_{t=0} w +  f(r)\dot w(0),
\eqn
so that with the identification $w_j\equiv w_j' \gd_{w_j'}+w_j'' \gd_{w_j''}$ in the notation of \eqref{eq:omega0alpha0} we obtain
\begin{align*}
 \Psi_{S,f}^\ast(\pi_k^*\alpha_{(k)})|_y (v)&= \frac d {dt} f(r(t))|_{t=0} \underbrace{(w_j' dw_j''- w_j'' dw_j')|_{\Psi_{S,f}(y)} (w)}_{=0}+ f(r) (w_j' dw_j''- w_j'' dw_j')|_{\Psi_{S,f}(y)}(\dot w(0))\\
&= f(r)^2\, \mathrm{pr}_S^\ast \mathrm{inc}_S^\ast \pi_k^*\alpha_{(k)}|_y (v).
\end{align*}
\end{proof}
We will now proceed with separate treatments of the cases $\ell_F^+,\ell_F^->0$ and $\ell_F^+\ell_F^-=0$, in which $Q_F$ is indefinite and definite, respectively.

\subsubsection{Definite case}\label{sec:defcase0} Let us begin with the easier definite case $F\in \F_\mathrm{def}$, so that either $\ell_F^-=0$ or  $\ell_F^+=0$, and write $\ell_F:=\ell_F^+>0$ if $F\in \F_+$  and $\ell_F:=\ell_F^->0$ if $F\in \F_-$.  The diagonal action of $S^1$ on
$\C^{\ell_F}=\bigoplus_{k\in W}\C^{\ell_F^{(k)}}$
considered in Section \ref{sec:stratif} restricts to a locally free
action on $S^{2\ell_F-1}$, and we introduce the connection form
\begin{equation}\label{conndefdef}
\Theta:=\mathrm{inc}^\ast_{S^{2\ell_F-1}}\theta \in\Omega^1(S^{2\ell_F-1},\R)
\end{equation}
as the restriction of \eqref{connCdef} to $S^{2\ell_F-1}$.

We now introduce spherical polar coordinates in $\C^{\ell_F}_{\bullet}$ 
via the diffeomorphism 
\begin{equation}\label{eq:Psidefinite}
\begin{split}
\Psi:S^{2\ell_F-1}  \times(0,\infty)&\longrightarrow \C^{\ell_F}_{\bullet}\\ \qquad (w,r)&\longmapsto r w\,,
\end{split}
\end{equation}
which is equivariant for the action of $S^1$ introduced above on the first factor
and the trivial action on the second factor of $S^{2\ell_F-1}\times(0,\infty)$.
It defines on $\C^{\ell_F}_{\bullet}$ the radial coordinate
\bq
r=\norm{w}_F=\sqrt{|Q_F(w)|},\quad\text{for all}\quad w\in \C^{\ell_F}_{\bullet},\label{eq:polardefinite}
\eq
which is related to the quadratic form $Q_F$ by
\bq
Q_F|_{\C^{\ell_F}_{\bullet}}=\pm r^2\quad \text{if }F\in \F_\pm.\label{eq:rQ_Fdefinite}
\eq
Applying Lemma \ref{lem:fundamentaldefinite} to $S=S^{2\ell_F-1}$ and $f=\mathrm{id}$ one immediately sees that  
\bqn
\Psi^\ast (\alpha^{\pm}) =r^2\, \mathrm{pr}_{S^{2\ell_F-1}}^\ast
\mathrm{inc}^\ast_{S^{2\ell_F-1}}\alpha^{\pm} \,.
\eqn
Since $\omega_\mathrm{std}=\pm\frac{1}{2}d\theta$ and  $\Theta=\mathrm{inc}^\ast_{S^{2\ell_F-1}}\theta$, we deduce from this the  useful identity
\begin{equation}\label{eq:omegaFpullbackdefinite}
\begin{split}
\Psi^\ast (\omega_{\mathrm{std}}|_{\C^{\ell_F}_{\bullet}})&=\pm\frac{1}{2}d(r^2\, \mathrm{pr}_{S^{2\ell_F-1}}^\ast \Theta)=\pm r\d r \wedge\mathrm{pr}_{S^{2\ell_F-1}}^\ast \Theta \pm\frac{1}{2}r^2\, \mathrm{pr}_{S^{2\ell_F-1}}^\ast d\Theta
\quad \text{if }F\in \F_\pm.
\end{split}
\end{equation}

\subsubsection{Indefinite case}
Let us now turn to the case $F\in \F_\mathrm{indef}$, so that $\ell_F^+,\ell_F^->0$. Departing from the quadric formed by the zero level set of $Q_F$ we define the slit quadric
\begin{equation}\label{splitquadric}
\mathcal{Q}_\times:=Q_F^{-1}(\{0\})\setminus\{0\}\subset \C^{\ell_F^+,\ell_F^-}_{\bullet\bullet}\,,
\end{equation}
which is a smooth submanifold of the set $\C^{\ell_F^+,\ell_F^-}_{\bullet\bullet}$
defined in \eqref{Cbulletdef}. 
The diagonal action of $S^1$ on
$\C^{\ell_F}=\bigoplus_{k\in W}\C^{\ell_F^{(k)}}$
considered in Section \ref{sec:stratif} restricts to locally free 
$S^1$-actions on 
$\mathcal{Q}_\times\subset \C^{\ell_F^+,\ell_F^-}_{\bullet\bullet}$ and
the product of ellipsoids 
\begin{equation}\label{prodellipsoid}
S^{2\ell_F^+-1}_+\times S^{2\ell_F^--1}_-\subset \C^{\ell_F^++\ell_F^-}=
 \C^{\ell_F^+}\oplus \C^{\ell_F^-}
\end{equation}
of the ellipsoids \eqref{ellipsoid}.
Besides the connection form \eqref{connC} we introduce now also  the $1$-form 
\begin{equation}\label{anticonnCindef}
\bar \theta|_w:=\frac{1}{2}\left(\frac{\alpha^+|_w}{ \norm{w}_{F+}^2}-\frac{\alpha^-|_w}{ \norm{w}_{F-}^2}\right)
\end{equation}
on $\C^{\ell_F^+,\ell_F^-}_{\bullet\bullet}$, and introduce the corresponding  restrictions 
\bq\label{eq:15.06.2022}
\Theta:=\mathrm{inc}^\ast_{S^{2\ell_F^+-1}_+\times S^{2\ell_F^--1}_-}\theta, 
\qquad \overline \Theta:=\mathrm{inc}^\ast_{S^{2\ell_F^+-1}_+\times S^{2\ell_F^--1}_-}\bar\theta
\eq
to $S^{2\ell_F^+-1}_+\times S^{2\ell_F^--1}_-$. Clearly, 
$\Theta\in\Omega^1(S^{2\ell_F^+-1}_+\times S^{2\ell_F^--1}_-,\R)$
is  a connection for the $S^1$-action on 
 $S^{2\ell_F^+-1}_+\times S^{2\ell_F^--1}_-$.  
 On the other hand, the $1$-form
$\overline \Theta\in\Omega^1(S^{2\ell_F^+-1}_+\times S^{2\ell_F^--1}_-,\R)$ is basic
in the sense of Proposition \ref{pi*iso}.

Let us now introduce polar moment coordinates  in
$\C^{\ell_F^+,\ell_F^-}_{\bullet\bullet}$ via the diffeomorphism
\begin{align}\begin{split}
\Psi:S^{2\ell_F^+-1}_+\times S^{2\ell_F^--1}_-\times(0,\infty)\times  \R &\longrightarrow \C^{\ell_F^+,\ell_F^-}_{\bullet\bullet}, \\
(w^+,w^-,r,q)&\longmapsto \Big (\sqrt{\sqrt{r^4+q^2}+q}\; w^+, \sqrt{\sqrt{r^4+q^2}- q}\; w^-\Big)\,.\label{eq:Psi}\end{split}
\end{align}
The map $\Psi$ is at the basis of our
local model. It is equivariant under the action of $S^1$ introduced above on
$S^{2\ell_F^+-1}_+\times S^{2\ell_F^--1}_-$ and the trivial action on $(0,\infty)\times  \R $, and defines for  $w \in \C^{\ell_F^+,\ell_F^-}_{\bullet\bullet}$
the  coordinates 
\begin{equation}\label{eq:rscoord}
r=\sqrt{\norm{w^+}_{F+} \, \norm{w^-}_{F-}}, \qquad
q=\frac{1}{2}Q_F(w).
\end{equation}
The slit quadric $\mathcal{Q}_\times\subset \C^{\ell_F^+,\ell_F^-}_{\bullet\bullet}$
defined in \eqref{splitquadric}
corresponds to the set $\{q=0\}$, while the coordinate $r$ is radial on
${\mathcal{Q}_\times}$. Moreover, composing the retraction
\begin{equation}\label{eq:retractionQslit}
\begin{split}
\mathrm{ret}_{\mathcal{Q}_\times}:\C^{\ell_F^+,\ell_F^-}_{\bullet\bullet}&\longrightarrow \mathcal{Q}_\times,\\
w=(w^+,w^-)&\longmapsto \sqrt{\norm{w^+}_{F+} \, \norm{w^-}_{F-}}\Big(\frac{w^+}{\norm{w^+}_{F+}},\frac{w^-}{\norm{w^-}_{F-}}\Big)
\end{split}
\end{equation}
with $\Psi$ gives us an $S^1$-equivariant surjection
\begin{align}\begin{split}
\pi_{\mathcal{Q}_\times}:S^{2\ell_F^+-1}_+\times S^{2\ell_F^--1}_-\times(0,\infty)\times \R&\longrightarrow \mathcal{Q}_\times,\\
(w^+,w^-,r,q)&\longmapsto \Psi(w^+,w^-,r,0)=(rw^+,rw^-).\label{eq:PsiQ}\end{split}
\end{align}
Let $\mathrm{pr}_{S^{2\ell_F^+-1}_+\times S^{2\ell_F^--1}_-}:S^{2\ell_F^+-1}_+\times S^{2\ell_F^--1}_-\times (0,\infty)\times \R\longrightarrow S^{2\ell_F^+-1}_+\times S^{2\ell_F^--1}_-$ be  the canonical projection. We then have 

\begin{lem}\label{lem:fundamentalindefinite} Write $\alpha:=\frac12 (\alpha^++\alpha^-)$ and  $\overline \alpha:=\frac12 (\alpha^+-\alpha^-)$.  In the coordinates \eqref{eq:rscoord},
one has 
\begin{align*}
\Psi^\ast (\alpha|_{\C^{\ell_F^+,\ell_F^-}_{\bullet\bullet}}) &=\sqrt{r^4+q^2}  \;\mathrm{pr}_{S^{2\ell_F^+-1}_+\times S^{2\ell_F^--1}_-}^\ast \Theta +  q \,\mathrm{pr}_{S^{2\ell_F^+-1}_+\times S^{2\ell_F^--1}_-}^\ast \overline \Theta,\\
\Psi^\ast (\overline\alpha|_{\C^{\ell_F^+,\ell_F^-}_{\bullet\bullet}}) &=\sqrt{r^4+q^2}\;  \mathrm{pr}_{S^{2\ell_F^+-1}_+\times S^{2\ell_F^--1}_-}^\ast \overline \Theta +  q\, \mathrm{pr}_{S^{2\ell_F^+-1}_+\times S^{2\ell_F^--1}_-}^\ast  \Theta\,.
\end{align*}
\end{lem}
\begin{proof}Consider $\Psi$ and $\mathrm{pr}_{S^{2\ell_F^+-1}_+\times S^{2\ell_F^--1}_-}$ as families of maps defined on $S^{2\ell_F^+-1}_+\times S^{2\ell_F^--1}_-\times (0,\infty)$ and parametrized by $q\in \R$. Defining $f_q^\pm:(0,\infty)\to (0,\infty)$ by $f_q^\pm(r):=\sqrt{\sqrt{r^4+q^2}\pm q}$, we can apply Lemma \ref{lem:fundamentaldefinite} with  $S=S^{2\ell_F^+-1}_+\times S^{2\ell_F^--1}_-$ and $f=f_q^\pm$ to get $$\Psi^\ast (\alpha^\pm|_{\C^{\ell_F^+,\ell_F^-}_{\bullet\bullet}})=f^\pm_q(r)^2\mathrm{pr}_{S^{2\ell_F^+-1}_+\times S^{2\ell_F^--1}_-}^\ast\mathrm{inc}^\ast_{S^{2\ell_F^+-1}_+\times S^{2\ell_F^--1}_-} \alpha^\pm\,,$$ concluding the proof.
\end{proof}
A direct consequence of Lemma \ref{lem:fundamentalindefinite} is the following indefinite version of \eqref{eq:omegaFpullbackdefinite}.
\begin{cor}\label{cor:Pamplona} In the coordinates \eqref{eq:Psi} one has 
\bqn
\Psi^\ast (\omega_\mathrm{std}|_{\C^{\ell_F^+,\ell_F^-}_{\bullet\bullet}}) = \pi_{\mathcal{Q}_\times}^\ast\mathrm{inc}_{\mathcal{Q}_\times}^\ast\omega_\mathrm{std}+   d\Big (  q\, \mathrm{pr}_{S^{2\ell_F^+-1}_+\times S^{2\ell_F^--1}_-}^\ast  \Theta + \big ( \sqrt{r^4+q^2} -r^2\big ) \mathrm{pr}_{S^{2\ell_F^+-1}_+\times S^{2\ell_F^--1}_-}^\ast \overline \Theta\Big ).
\eqn
\end{cor}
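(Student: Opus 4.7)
The plan is to reduce everything to the identity $\omega_\mathrm{std}=d\bar\theta$, coming from $\bar\theta=\tfrac{1}{2}\alpha$ and $\omega_\mathrm{std}=\tfrac{1}{2}d\alpha$, and then to identify each of the two summands on the right-hand side of the claim as the exterior derivative of an explicit primitive.

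First, I would compute $\Psi^\ast\omega_\mathrm{std}$ directly from the second identity in Lemma \ref{lem:fundamentalindefinite}:
\begin{equation*}
\Psi^\ast\omega_\mathrm{std}=d\Psi^\ast\bar\theta=d\bigl(\sqrt{r^4+q^2}\;\mathrm{pr}_{\mathbb{S}}^\ast\bar\Xi+q\,\mathrm{pr}_{\mathbb{S}}^\ast\Xi\bigr).
\end{equation*}
The desired right-hand side differs from this expression exactly by the exact form $d(r^2\,\mathrm{pr}_{\mathbb{S}}^\ast\bar\Xi)$, so it suffices to prove the single identity
\begin{equation*}
\pi_{\mathcal{Q}_\times}^\ast\mathrm{inc}_{\mathcal{Q}_\times}^\ast\omega_\mathrm{std}=d\bigl(r^2\,\mathrm{pr}_{\mathbb{S}}^\ast\bar\Xi\bigr).
\end{equation*}

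To establish this identity, I would note that the composition $\mathrm{inc}_{\mathcal{Q}_\times}\circ\pi_{\mathcal{Q}_\times}:\mathbb{S}\times(0,\infty)\times\R\to\C^{\ell_F^++\ell_F^-}_{\bullet\bullet}$ is given by $(w^+,w^-,r,q)\mapsto(rw^+,rw^-)$ and hence factors through the projection that forgets the $q$-coordinate followed by the map $\Psi_{\mathbb{S},\mathrm{id}}$ of Lemma \ref{lem:fundamentaldefinite} with $S=\mathbb{S}$ and $f=\mathrm{id}$. Applying that lemma yields $(\mathrm{inc}_{\mathcal{Q}_\times}\circ\pi_{\mathcal{Q}_\times})^\ast\alpha=r^2\,\mathrm{pr}_{\mathbb{S}}^\ast\mathrm{inc}_{\mathbb{S}}^\ast\alpha=2r^2\,\mathrm{pr}_{\mathbb{S}}^\ast\bar\Xi$, using $\alpha=2\bar\theta$ and the definition of $\bar\Xi$ in \eqref{eq:15.06.2022}. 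Dividing by $2$ gives $\pi_{\mathcal{Q}_\times}^\ast\mathrm{inc}_{\mathcal{Q}_\times}^\ast\bar\theta=r^2\,\mathrm{pr}_{\mathbb{S}}^\ast\bar\Xi$, and applying $d$ together with $\omega_\mathrm{std}=d\bar\theta$ finishes the computation.

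There is no serious obstacle here: both ingredients (Lemmas \ref{lem:fundamentaldefinite} and \ref{lem:fundamentalindefinite}) have just been proved, and the only thing to be careful about is bookkeeping the factors of $\tfrac{1}{2}$ between $\alpha$ and $\bar\theta$, and observing that $\pi_{\mathcal{Q}_\times}$ is independent of $q$, so one may indeed factor it through $\Psi_{\mathbb{S},\mathrm{id}}$ and invoke Lemma \ref{lem:fundamentaldefinite} verbatim.
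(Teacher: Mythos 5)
Your proposal is correct and follows essentially the same route as the paper: apply $d$ to the $\bar\theta$-identity of Lemma \ref{lem:fundamentalindefinite} and then identify $\pi_{\mathcal{Q}_\times}^\ast\mathrm{inc}_{\mathcal{Q}_\times}^\ast\omega_\mathrm{std}=d\bigl(r^2\,\mathrm{pr}_{\mathbb{S}}^\ast\bar\Xi\bigr)$ via Lemma \ref{lem:fundamentaldefinite} with $S=\mathbb{S}$, $f=\mathrm{id}$, and $q$ as a spectator parameter. Your bookkeeping $\mathrm{inc}_{\mathbb{S}}^\ast\alpha=2\bar\Xi$ is the correct reading of \eqref{eq:conn0} and \eqref{eq:15.06.2022} (the paper's phrase ``$\bar\Xi=2\,\mathrm{inc}_{\mathbb{S}}\alpha$'' is a harmless typo), so nothing further is needed.
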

\begin{proof}As $\omega_\mathrm{std}=d\bar\alpha$, we apply $d$ on both sides of the equation in Lemma \ref{lem:fundamentalindefinite} involving $\bar\alpha$, yielding
\begin{align*}\label{eq:08.08.2022}
\Psi^\ast (\omega_\mathrm{std}|_{\C^{\ell_F^+,\ell_F^-}_{\bullet\bullet}}) &=  d\big ( q\, \mathrm{pr}_{S^{2\ell_F^+-1}_+\times S^{2\ell_F^--1}_-}^\ast  \Theta + \sqrt{r^4+q^2}  \mathrm{pr}_{S^{2\ell_F^+-1}_+\times S^{2\ell_F^--1}_-}^\ast \overline \Theta\big ). 
\end{align*}
We now assert that
\bq\label{eq:09.08.2022}
\pi_{\mathcal{Q}_\times}^\ast\mathrm{inc}_{\mathcal{Q}_\times}^\ast\omega_\mathrm{std}=d\big (r^2 \mathrm{pr}_{S^{2\ell_F^+-1}_+\times S^{2\ell_F^--1}_-}^\ast \overline  \Theta \big ).
\eq
Indeed, since  $\overline \Theta=\mathrm{inc}^\ast_{S^{2\ell_F^+-1}_+\times S^{2\ell_F^--1}_-}\bar \alpha$, this
is a consequence of  the relation
\bqn
\pi_{\mathcal{Q}_\times}^\ast\mathrm{inc}_{\mathcal{Q}_\times}^\ast\bar \alpha=  r^2\, \mathrm{pr}_{S^{2\ell_F^+-1}_+\times S^{2\ell_F^--1}_-}^\ast\mathrm{inc}_{S^{2\ell_F^+-1}_+\times S^{2\ell_F^--1}_-}^\ast\bar \alpha
\eqn
which follows from Lemma \ref{lem:fundamentaldefinite}
applied with $S=S^{2\ell_F^+-1}_+\times S^{2\ell_F^--1}_-$,
$f=\mathrm{id}$, and $q\in \R$ as an additional parameter.
\end{proof}

\subsection{Application to the symplectic normal bundle}\label{sec:applicationtonormal}
We now translate the fiberwise considerations from Section \ref{sec:elementary} into global statements on the symplectic normal bundle $\Sigma_F$, identified with the associated bundle $P_F\times_{K_F}\bigoplus_{k\in W}\C^{\ell_F^{(k)}}$ as in \eqref{sigmaprbdle}.
 
Choose a $K_F$-connection on the principal bundle $\nu_{P_F}:P_F \rightarrow F$,
inducing a Hermitian connection $\nabla^{\Sigma_F}$ on $\Sigma_F$
that preserves the decomposition \eqref{eq:splitting111k}.
This provides us with a splitting of the short exact sequence
\bqn 
0 \longrightarrow \nu_{\Sigma_F}^*\Sigma_F \longrightarrow T\Sigma_F \longrightarrow \nu_{\Sigma_F}^*TF \longrightarrow 0 
\eqn
of vector bundles over $\Sigma_F$, yielding the global decomposition
\bq\label{splitconnSigmaF}
T\Sigma_F\cong T^\mathrm{hor}F \oplus \nu_{\Sigma_F}^*\Sigma_F\,,
\eq
where $T^\mathrm{hor}F\simeq\nu_{\Sigma_F}^*TF$ is the horizontal
distribution defined by $\nabla^{\Sigma_F}$ and 
$\nu_{\Sigma_F}^*\Sigma_F\subset T\Sigma_F$ is the vertical tangent bundle to the fibers. Choose a compatible almost complex structure $J_F\in\End(TF)$ over
$(F,\omega_F)$. Together with the natural
complex structure $J_{\Sigma_F}\in\End(\Sigma_F)$, this induces via the decomposition
\eqref{splitconnSigmaF} an $S^1$-invariant almost complex structure
$J\in\End(T\Sigma_F)$ over the total space of $\Sigma_F$. 

For any $k_0\in W$,
we define $f_{k_0}\in\Cinft(\Sigma_F,\R)$ by setting 
\begin{equation}\label{fkdef}
f_{k_0}([\wp,w]):=\frac{1}{2}\norm{\pi_{k_0}(w)}^2,\quad
\text{for all}\quad
[\wp,w]\in P_F\times_{K_F}\bigoplus_{k\in W}\C^{\ell_F^{(k)}},
\end{equation}
where $\norm{\cdot}$ denotes the standard Hermitian norm. Note that  $f_{k_0}$
is well-defined since for each $k\in W$ the structure group $K_F$ acts by 
$\mathrm{U}(\ell_F^{(k)})$ on $\C^{\ell_F^{(k)}}$ and thus preserves the Hermitian norm. 
Using this, we define the $1$-form $\alpha_{k_0}\in\Omega^1(\Sigma_F,\R)$ by 
\begin{equation}\label{eq:31.08.22}
\alpha_{k_0}(v):=-df_{k_0}(Jv),\quad\text{for all}\quad
v\in T\Sigma_F\,.
\end{equation}
Comparing with \eqref{eq:omega0alpha0},
one readily checks that for any $p\in F$ the restriction of
$\alpha_{k_0}\in\Omega^1(\Sigma_F,\R)$
to the fiber $(\Sigma_F)_p\subset\Sigma_F$,
seen as a submanifold
in the total space of $\Sigma_F$, satisfies
\begin{equation}\label{thetadef}
\alpha_{k_0}|_{(\Sigma_F)_p}=\pi_{k_0}^*\alpha_{(k_0)}\,,
\end{equation}
in any trivialization $(\Sigma_F)_p\simeq\bigoplus_{k\in W}\C^{\ell_F^{(k)}}$
compatible with the $K_F$-principal bundle structure \eqref{sigmaprbdle}.
Following \eqref{eq:alphapm},
\eqref{connCdef} and \eqref{connC}, we write
\begin{equation}\label{alphaFdef}
 \alpha^\pm_F:=\pm \sum_{k\in W^\pm}\alpha_k\,,
\end{equation}
so that the $1$-forms given by
\begin{equation}\label{eq:XiFdef}
\theta_F^\pm|_{[\wp,w]}:=\frac{\alpha^\pm_F|_{[\wp,w]}}{\norm{w}_{F\pm}^2}\quad \text{for}\quad [\wp,w]\in P_F\times_{K_F}\C^{\ell_F^\pm}_{\bullet},
\end{equation}
\begin{equation}\label{eq:XiF}
\theta_F|_{[\wp,w]}:=\frac{1}{2}\left(\frac{\alpha^+_F|_{[\wp,w]}}{\norm{w}_{F+}^2}+
\frac{\alpha^-_F|_{[\wp,w]}}{\norm{w}_{F-}^2}\right)\quad \text{for}\quad [\wp,w]\in P_F\times_{K_F}\C^{\ell_F^+,\ell_F^-}_{\bullet\bullet},
\end{equation}
define connections for the $S^1$-action on $P_F\times_{K_F}\C^{\ell_F^\pm}_{\bullet}$ and $P_F\times_{K_F}\C^{\ell_F^+,\ell_F^-}_{\bullet\bullet}$,  respectively, 
such that for all $p\in F$ the restriction of $\theta_F^\pm,\,\theta_F$ to the fiber over $p$ coincides with $\theta^\pm,\,\theta$ in any trivialization $(\Sigma_F)_p\simeq\bigoplus_{k\in W}\C^{\ell_F^{(k)}}$
compatible with the $K_F$-principal bundle structure \eqref{sigmaprbdle}.
We also define the vertical
form $\omega_{\mathrm{vert}}\in\Omega^2(\Sigma_F,\R)$ by
\begin{equation}\label{omstdsigmaFdef}
\omega_{\mathrm{vert}}:=\frac{1}{2}\sum_{k\in W}\,d\alpha_k\,.
\end{equation}
Comparing with \eqref{eq:omega0alpha0} and \eqref{thetadef},
for all $p\in F$ the restriction of 
$\omega_{\mathrm{vert}}\in\Omega^2(\Sigma_F,\R)$
to the fiber $(\Sigma_F)_p\subset\Sigma_F$ satisfies
\bq
\omega_{\mathrm{vert}}|_{(\Sigma_F)_p}=\omega_{\mathrm{std}}\,,\label{eq:relomegastdomegavert}
\eq
in any trivialization $(\Sigma_F)_p\simeq\bigoplus_{k\in W}\C^{\ell_F^{(k)}}$
compatible with the $K_F$-principal bundle structure \eqref{sigmaprbdle}, where $\omega_{\mathrm{std}}$ is the standard symplectic form
on $\bigoplus_{k\in W}\C^{\ell_F^{(k)}}$ induced by \eqref{eq:omega0alpha0} for
each $k\in W$.  Furthermore, note that the restriction
$T\Sigma_F|_F$ over the zero section $F\subset\Sigma_F$ naturally
identifies with the restriction $TM|_F$ over $F\subset M$, and  the restriction of
$\omega_{\mathrm{vert}}$ to $T\Sigma_F|_F$
coincides with $\omega^{\perp}$ in the decomposition \eqref{TMFdec}. Consequently, we can explicitly choose the symplectic form $\omega_{\Sigma_F}$ from Proposition \ref{prop:localnormform} 
to be given by 
\begin{equation}\label{sympSigmaF}
\omega_{\Sigma_F}:=\omega_{\mathrm{vert}}+\nu_{\Sigma_F}^*\,\omega_F \in\Omega^2(\Sigma_F,\R).
\end{equation}
By construction, the almost complex structure $J\in\End(T\Sigma_F)$ over $\Sigma_F$
is compatible with $\omega_{\Sigma_F}$, and we write
$g^{T\Sigma_F}:=\omega_{\Sigma_F}(\cdot,J\cdot)$ for the induced $S^1$-invariant
Riemannian metric over $V_F$.

\subsubsection{Definite case} Let $F\in \F_\mathrm{def}$. Recall the definition of $\ell_F$ from the beginning of Section \ref{sec:defcase0}. As the linear action of $K_F$ on $\C^{\ell_F}$ preserves  $S^{2\ell_F-1}$, we can consider the associated sphere bundle 
$
S_F:= P_F\times_{K_F}S^{2\ell_F-1}
$ inside $\Sigma_F=P_F\times_{K_F}\C^{\ell_F}$, with bundle projection $\nu_{S_F}:S_F\to F$. The  $S^1$-action on $\Sigma_F$
restricts to a locally free $S^1$-action on $S_F$ for which we have the connection 
\bq
\label{eq:connS_Fdefinite}
\Theta_{S_F}:=\mathrm{inc}^\ast_{S_F}\theta_F
\eq
given by restriction of the connection $\theta_F$ from \eqref{eq:XiFdef}.
Using \eqref{Cbulletdef} to define the subbundle
\bqn
 \Sigma_{F\bullet}:=P_F\times_{K_F}\C^{\ell_F}_{\bullet} = \Sigma_F\setminus F, 
\eqn
the $S^1$-equivariant diffeomorphism $\Psi$ from \eqref{eq:Psidefinite} globalizes to an $S^1$-equivariant diffeomorphism 
\begin{equation}
\begin{split}
\Psi_{F}: S_F\times(0,\infty)&\longrightarrow \Sigma_{F\bullet}\\
([\wp,w], r)&\longmapsto [\wp,\Psi(w,r)]\,,\label{eq:PsiSF}
\end{split}
\end{equation}
where  $S^1$ acts on $S_F\times(0,\infty)$ by the product of the $S^1$-action on $S_F$ and the trivial action on $(0,\infty)$. $\Psi_{F}$  promotes the coordinate $r$ from \eqref{eq:polardefinite} to a global fiber-radial coordinate $r$ on the bundle $\Sigma_{F\bullet}$. Its relation to $Q_F$  is given by \eqref{eq:rQ_Fdefinite} with  $\C^{\ell_F}_{\bullet}$ replaced by $\Sigma_{F\bullet}$.  Note that $S_F\times(0,\infty)$ is a fiber bundle over $F$ with the projection $\nu_{S_F\times(0,\infty)}:=\nu_{S_F}\circ\mathrm{pr}_{S_F}$. We state the global version of \eqref{eq:omegaFpullbackdefinite} in 
\begin{cor}\label{cor:PsiSFastdefinite}
In the coordinate \eqref{eq:polardefinite}
provided by the diffeomorphism \eqref{eq:PsiSF} we have
\bqn
\Psi_{F}^\ast (\omega_{\Sigma_F}|_{\Sigma_{F\bullet}})=\nu_{S_F\times(0,\infty)}^\ast\omega_F\pm d\Big(\frac{r^2}{2} \mathrm{pr}_{S_F}^\ast \Theta_{S_F}\Big)\quad
\text{if }F\in \F_\pm.
\label{eq:PsiFastomegaSF}
\eqn
\end{cor}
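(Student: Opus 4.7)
My plan is to decompose $\omega_{\Sigma_F}$ according to \eqref{sympSigmaF} as $\omega_{\mathrm{vert}}+\nu_{\Sigma_F}^\ast\omega_F$ and establish the pullback identity term by term. For the base component, the diffeomorphism $\Psi_F$ from \eqref{eq:PsiSF} is fiber-preserving in the precise sense that $\nu_{\Sigma_F}\circ\Psi_F=\nu_{S_F\times(0,\infty)}$, so naturality of pullback gives $\Psi_F^\ast(\nu_{\Sigma_F}^\ast\omega_F)=\nu_{S_F\times(0,\infty)}^\ast\omega_F$ immediately, which accounts for the first summand on the right-hand side of \eqref{eq:PsiFastomegaSF}.

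For the vertical part, using $\omega_{\mathrm{vert}}=d\bigl(\sum_{k\in W}\theta_k\bigr)$ from \eqref{omstdsigmaFdef}, the statement reduces to the primitive-level identity
\[
\Psi_F^\ast\Bigl(\sum_{k\in W}\theta_k\Bigr)=\pm\frac{r^2}{2}\,\mathrm{pr}_{S_F}^\ast\Xi_{S_F}\qquad\text{if }F\in\F_\pm,
\]
after which differentiating both sides yields the claim. I would verify this in local trivializations $P_F|_U\cong U\times K_F$ of the principal bundle, which induce $S^1$-equivariant identifications $\Sigma_F|_U\cong U\times\C^{\ell_F}$ under which $\Psi_F$ restricts to $(u,w,r)\mapsto(u,rw)$, namely the identity on $U$ times the fiber map $\Psi$ from \eqref{eq:Psidefinite}. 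In such a trivialization, the definition \eqref{eq:31.08.22} of $\theta_k$ together with $K_F$-unitarity makes $\sum_k\theta_k$ equal to $\tfrac{1}{2}\alpha$ precomposed with the vertical projection determined by the local connection $1$-form $A$ of $\nabla^{\Sigma_F}$. Applying Lemma \ref{lem:fundamentaldefinite} with $S=S^{2\ell_F-1}$ and $f=\mathrm{id}$ to the fiber factor of $\Psi_F$ then gives an explicit expression for $\Psi_F^\ast\bigl(\sum_k\theta_k\bigr)$; the same expression emerges on the right-hand side from \eqref{eq:XiF} evaluated on the weighted sphere $\{\|w\|_F=1\}\cong S^{2\ell_F-1}$, using the definite-case simplifications $\mathrm{sgn}(k)\equiv\pm 1$ and $\sum_{k\in W}|k|\|\pi_k(w)\|^2=1$, with the overall sign determined by $F\in\F_\pm$.

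The main subtlety—and the step I expect to require the most care—is ensuring that these trivialization-dependent identities assemble into a global identity on $S_F\times(0,\infty)$, rather than merely holding on each fiber. This works because both sides are constructed from $K_F$-equivariant data on $\C^{\ell_F}$ using the \emph{same} connection on $P_F$: the $K_F$-connection form governing the vertical projection in the computation of $\sum_k\theta_k$ is precisely the one implicit in the global definition of $\Xi_F$ extending \eqref{eq:XiF} from a fiber-wise formula to a well-defined $1$-form on $\Sigma_F\setminus F$. This coincidence is what ultimately makes the horizontal contributions on the two sides match, allowing the local identities to patch together consistently into the desired global identity.
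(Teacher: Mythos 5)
Your proposal is correct and follows essentially the same route as the paper: the decomposition \eqref{sympSigmaF}, the observation $\nu_{\Sigma_F}\circ\Psi_F=\nu_{S_F\times(0,\infty)}$ for the base term, and Lemma \ref{lem:fundamentaldefinite} with $f=\mathrm{id}$ for the fiber term, which is precisely how the identity \eqref{eq:omegaFpullbackdefinite} invoked in the paper's proof was obtained. The only difference is one of detail, not of method: you work with the global primitive $\sum_{k\in W}\theta_k$ and spell out the patching argument (both sides annihilate the horizontal distribution coming from the same $K_F$-connection, so the fiberwise computation globalizes) that the paper's one-line proof leaves implicit.
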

\begin{proof}
In view of the definition \eqref{eq:connS_Fdefinite} of $\Theta_{S_F}$
and the definition
\eqref{eq:PsiSF} of $\Psi_{F}$ the assertion follows from
\eqref{sympSigmaF}, \eqref{eq:omegaFpullbackdefinite}, and \eqref{eq:relomegastdomegavert}.
\end{proof}
For a better overview, we illustrate the maps appearing in Corollary \ref{cor:PsiSFastdefinite} in the diagram
\bq\label{diagram_retract_EFdefinite}
\begin{split}
\begin{tikzpicture}[node distance=2cm, auto]
\node (A) {$\Sigma_{F\bullet}$} ;   
\node (B) [right=3cm of A] {$S_F\times(0,\infty)$};
\node (C) [above of=A] {$S_F$}; 
\node (D) [below of=A] {$F$}; 
\draw[left hook->>] (B) to node[below] {$\cong$} node[above] {$\Psi_{F}$} (A); 
\draw[<-left hook] (A) to node[right] {$\mathrm{inc}_{S_F}$} (C);
\draw[->>] (B) to node[above right] {$\mathrm{pr}_{S_F}$} (C);
\draw[->>] (B) to node[below right] {$\nu_{S_F\times(0,\infty)}$} (D);
\draw[->>, bend right] (C) to node[left] {$\nu_{S_F}$} (D);
\draw[->>] (A) to node[right] {$\nu_{\Sigma_{F\bullet}}$}  (D);
\end{tikzpicture}
\end{split}
\eq
in which everything commutes except the triangle formed by $\mathrm{pr}_{S_F}$, $\mathrm{inc}_{S_F}$, and $\Psi_{F}$, which commutes only up to $S^1$-equivariant homotopy.

\subsubsection{Indefinite case}\label{indefsympbdle}
Let $F\in \F_\mathrm{indef}$. As the linear action of $K_F$ on $\C^{\ell_F^++\ell_F^-}$ preserves the bi-ellipsoid \eqref{prodellipsoid},
 we can consider the associated fiber bundle
 \begin{equation}\label{SFdef}
S_F:= P_F\times_{K_F}(S^{2\ell_F^+-1}_+\times S^{2\ell_F^--1}_-)
\end{equation}
inside $\Sigma_F\cong P_F\times_{K_F}\C^{\ell_F^++\ell_F^-}$. 
Furthermore, the $0$-level set $Z_F\subset\Sigma_F\setminus F$
defined in \eqref{ZFdef} has the fiber bundle structure 
\begin{equation}\label{ZFdef2}
Z_F\cong P_F\times_{K_F}\mathcal{Q}_\times\,
\end{equation}
associated with
the slit quadric \eqref{splitquadric}. Recall that we write $\nu_{S_F}:S_F\to F$ and $\nu_{Z_F}:Z_F\to F$ for the fiber bundle projections. The action of $S^1$ on $\Sigma_F$ restricts to locally free $S^1$-actions on $S_F$ and $Z_F$, respectively, and we set 
\begin{align}
\label{eq:connS_F}
\Theta_{S_F}&:=\mathrm{inc}_{S_F}^\ast\theta_F=\frac{1}{2}(\Theta^+_{S_F}+\Theta^-_{S_F}),&\overline \Theta_{S_F}&:=\frac{1}{2}(\Theta^+_{S_F}-\Theta^-_{S_F}),& \text{with}\quad
  \Theta^\pm_{S_F}&:=\mathrm{inc}_{S_F}^\ast \theta^\pm_F, 
 \end{align}
 \begin{equation}
\Theta_{Z_F}:=\mathrm{inc}_{Z_F}^\ast\theta_F=\frac{1}{2}(\Theta^+_{Z_F}+\Theta^-_{Z_F}),\quad \overline \Theta_{Z_F}:=\frac{1}{2}(\Theta^+_{Z_F}-\Theta^-_{Z_F}),\quad \text{with}\quad
\Theta^\pm_{Z_F}:=\mathrm{inc}_{Z_F}^\ast\theta_F^\pm\,,
  \label{eq:04.05.2022a}
\end{equation}
where $\theta_F$ is the connection form on $\Sigma_F\setminus F$ from  \eqref{eq:XiF} and the $1$-forms $\alpha_k$ were defined in \eqref{eq:31.08.22}.  Thus, $\Theta_{S_F}$ and $\Theta_{Z_F}$ are connections 
for the $S^1$-actions on $S_F$ and $Z_F$, respectively. On the other hand,  $\overline \Theta_{S_F}$ and $\overline \Theta_{Z_F}$  are basic differential forms in the sense of Proposition \ref{pi*iso}. Using \eqref{Cbulletdef} to define the
subbundle
$$
 \Sigma_{F\bullet\bullet}:=P_F\times_{K_F}\C^{\ell_F^+,\ell_F^-}_{\bullet\bullet},
 $$ 
the $S^1$-equivariant diffeomorphism $\Psi$ from \eqref{eq:Psi} globalizes to an $S^1$-equivariant diffeomorphism 
\begin{equation}\label{eq:Psiindef}
\begin{split}
\Psi_{F}: S_F\times(0,\infty)\times \R &\longrightarrow \Sigma_{F\bullet\bullet}\\
([\wp,w], r,q)&\longmapsto [\wp,\Psi(w,r,q)],
\end{split}
\end{equation}
where $S^1$ acts on $S_F\times(0,\infty)\times  \R$ by the product of the $S^1$-action on $S_F$ and the trivial action on $(0,\infty)\times \R$. The map $\Psi_{F}$  defines coordinates \eqref{eq:rscoord}
on $\Sigma_{F\bullet\bullet}$, in which $Z_F$ coincides with $\{q=0\}$.  
The retraction $\mathrm{ret}_{\mathcal{Q}_\times}$ from \eqref{eq:retractionQslit}  and the surjection $\pi_{\mathcal{Q}_\times}$ from \eqref{eq:PsiQ} globalize to an $S^1$-equivariant retraction and an $S^1$-equivariant  surjection onto $Z_F$, respectively, by defining 
\begin{equation}
\begin{split}
\mathrm{ret}_{Z_F}: \Sigma_{F\bullet\bullet} &\longrightarrow Z_F\\
[\wp,w]&\longmapsto [\wp,\mathrm{ret}_{\mathcal{Q}_\times}(w)],
\end{split}\label{eq:retZF}
\end{equation}
as well as
\begin{equation*}\label{retZF}
\begin{split}
\pi_{Z_F}:=\mathrm{ret}_{Z_F}\circ \Psi_{F}: S_F\times(0,\infty)\times \R &\longrightarrow Z_F,\\
([\wp,w], r,q)&\longmapsto [\wp,\pi_{\mathcal{Q}_\times}(w,r,q)]=\Psi_{F}([\wp,w], r,0)=[\wp,rw].
\end{split}
\end{equation*}
The restricted diffeomorphism 
\bq
\Psi_{F}:S_F\times(0,\infty)\times\{0\}\longrightarrow Z_F,\label{eq:PsiZF}
\eq
denoted again just by $\Psi_F$ for simplicity, provides the radial coordinate $r$ on $Z_F$. 
Note that the space $S_F\times(0,\infty)$, which we identify canonically with $S_F\times(0,\infty)\times\{0\}$, as well as the space $S_F\times(0,\infty)\times \R$, are fiber bundles over $F$ with the corresponding projections $\nu_{S_F\times(0,\infty)}$, $\nu_{S_F\times(0,\infty)\times \R}$ given by the composition of $\nu_{S_F}$ with the canonical projections onto $S_F$, respectively. These constructions lead us to the following 

\begin{cor}\label{cor:PamplonaF}
In the coordinates \eqref{eq:rscoord} defined by the diffeomorphism \eqref{eq:Psiindef} one has
\bqn
\Psi_{F}^\ast (\omega_{\Sigma_F}|_{\Sigma_{F\bullet\bullet}})= \pi_{Z_F}^\ast \mathrm{inc}_{Z_F}^\ast\omega_{\Sigma_F}+   d\Big (  q\, \mathrm{pr}_{S_F}^\ast  \Theta_{S_F} + \big ( \sqrt{r^4+q^2} -r^2\big ) \mathrm{pr}_{S_F}^\ast \overline \Theta_{S_F}\Big ).
\eqn
\end{cor}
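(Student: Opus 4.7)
The strategy is to reduce Corollary \ref{cor:PamplonaF} to its fiberwise counterpart Corollary \ref{cor:Pamplona} by exploiting the associated bundle description $\Sigma_F = P_F\times_{K_F}\C^{\ell_F^++\ell_F^-}$ and the compatibility of all the objects involved with this structure.

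First I would split $\omega_{\Sigma_F}|_{\Sigma_{F\bullet\bullet}}=\omega_{\mathrm{vert}}|_{\Sigma_{F\bullet\bullet}}+\nu_{\Sigma_F}^{*}\omega_F$ via \eqref{sympSigmaF} and pull back the two summands separately. The horizontal piece is immediate: since $\Psi_F$ and $\pi_{Z_F}$ are fiber-preserving maps covering $\mathrm{id}_F$, one has $\nu_{\Sigma_F}\circ\Psi_F=\nu_{S_F\times(0,\infty)\times\R}=\nu_{\Sigma_F}\circ\mathrm{inc}_{Z_F}\circ\pi_{Z_F}$, hence $\Psi_F^{*}\nu_{\Sigma_F}^{*}\omega_F=\pi_{Z_F}^{*}\mathrm{inc}_{Z_F}^{*}\nu_{\Sigma_F}^{*}\omega_F=\nu_{S_F\times(0,\infty)\times\R}^{*}\omega_F$, and these two contributions cancel. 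One is thus reduced to proving the identity with $\omega_{\mathrm{vert}}$ in place of $\omega_{\Sigma_F}$.

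Next I would establish a global version of Lemma \ref{lem:fundamentalindefinite}. The key observation is that the 1-forms $\theta_k\in\Omega^1(\Sigma_F,\R)$ from \eqref{eq:31.08.22} are \emph{vertical} with respect to the splitting \eqref{splitconnSigmaF}: indeed, $\theta_k(v)=-\tfrac{1}{2}df_k(Jv)$, the function $f_k$ from \eqref{fkdef} is $K_F$-invariant and depends only on fiber coordinates (so $df_k$ annihilates horizontal vectors), and the almost complex structure $J$ preserves the horizontal-vertical decomposition. Working in a local trivialization of $P_F$ over $U\subset F$, the map $\Psi_F$ restricts to $\mathrm{id}_U\times\Psi$ and the restrictions of $\theta_k,\mathrm{inc}_{S_F}^{*}\theta_k$ reduce to the fiber 1-forms appearing in \eqref{eq:conn0} and \eqref{eq:15.06.2022}. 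Lemma \ref{lem:fundamentalindefinite} then applies pointwise over $U$ and yields the global identities
\begin{align*}
\Psi_F^{*}\Big(\textstyle\sum_{k\in W^+}\theta_k-\sum_{k\in W^-}\theta_k\Big)&=\sqrt{r^4+q^2}\,\mathrm{pr}_{S_F}^{*}(\Xi^+_{S_F}+\Xi^-_{S_F})+q\,\mathrm{pr}_{S_F}^{*}(\Xi^+_{S_F}-\Xi^-_{S_F}),\\
\Psi_F^{*}\Big(\textstyle\sum_{k}\theta_k\Big)&=\sqrt{r^4+q^2}\,\mathrm{pr}_{S_F}^{*}(\Xi^+_{S_F}-\Xi^-_{S_F})+q\,\mathrm{pr}_{S_F}^{*}(\Xi^+_{S_F}+\Xi^-_{S_F}),
\end{align*}
expressed through $\Xi_{S_F}$ and $\bar\Xi_{S_F}$ via the relations in \eqref{eq:connS_F}.

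Finally, since $\omega_{\mathrm{vert}}=d\sum_k\theta_k$ by \eqref{omstdsigmaFdef}, applying $d$ to the second identity above gives an explicit formula for $\Psi_F^{*}\omega_{\mathrm{vert}}$. A parallel globalization of \eqref{eq:09.08.2022}, namely $\pi_{Z_F}^{*}\mathrm{inc}_{Z_F}^{*}\omega_{\mathrm{vert}}=d\bigl(r^{2}\,\mathrm{pr}_{S_F}^{*}(\Xi^+_{S_F}-\Xi^-_{S_F})\bigr)$, obtained in the same way from Lemma \ref{lem:fundamentaldefinite} applied with $S=\mathbb{S}$ and $f=\mathrm{id}$, allows one to subtract and produce exactly $d\bigl(q\,\mathrm{pr}_{S_F}^{*}\Xi_{S_F}+(\sqrt{r^4+q^2}-r^{2})\mathrm{pr}_{S_F}^{*}\bar\Xi_{S_F}\bigr)$, completing the proof. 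The main obstacle is ensuring that no horizontal error terms creep into the global analogue of Lemma \ref{lem:fundamentalindefinite}; once the verticality of $\theta_k$ is established, the remaining exterior-derivative manipulation is identical to the one carried out in the proof of Corollary \ref{cor:Pamplona}.
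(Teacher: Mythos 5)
Your proposal is correct and takes essentially the paper's route: the paper's proof simply declares the corollary a straightforward adaptation of Lemma \ref{lem:fundamentalindefinite} and Corollary \ref{cor:Pamplona} via $\omega_{\Sigma_F}=\omega_{\mathrm{vert}}+\nu_{\Sigma_F}^\ast\omega_F$ and $\omega_{\mathrm{vert}}=\sum_k d\theta_k$, and this is exactly the adaptation you carry out, with the verticality of the $\theta_k$ relative to the splitting \eqref{splitconnSigmaF} supplying the reason the fiberwise identities globalize without horizontal error terms. Just phrase the trivialization step with care: in a generic local trivialization of $P_F$ the connection-horizontal distribution differs from the product-horizontal one, so $\theta_k$ is not literally the pullback of $\tfrac12\pi_k^\ast\alpha_{(k)}$; the clean formulation is that both sides of your displayed identities annihilate the horizontal distribution (which $\Psi_F$, $\mathrm{pr}_{S_F}$ and $\pi_{Z_F}$ intertwine, being associated-bundle maps over $\mathrm{id}_F$) and agree on fiber directions by Lemma \ref{lem:fundamentalindefinite} --- which is precisely the verticality you established, used at that point rather than a literal product decomposition of $\theta_k$.
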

\begin{proof}
By the definition of $\omega_{\Sigma_F}$ in \eqref{sympSigmaF} and \eqref{eq:relomegastdomegavert} the proof
is a straightforward consequence of  Corollary \ref{cor:Pamplona}.
\end{proof}

For an overview, the maps introduced above, and in particular those involved in Corollary \ref{cor:PamplonaF}, are illustrated in the  diagram 
\begin{equation}
\begin{split}
\hspace*{-0.5em}\begin{tikzpicture}[node distance=3cm, auto]
\node (A) {$\Sigma_{F\bullet\bullet}$} ;  
\node (E) [above=3cm of A] {$Z_F$};  
\node (B) [right=4.5cm of A] {$S_F\times(0,\infty)\times \R$};
\node (D) [below=3cm of A] {$F$}; 
\node (H) [above=3cm of B] {$S_F\times(0,\infty)\times \{0\}\cong S_F\times(0,\infty) $};
\node (C) [above=3cm of E] {$S_F$};
\node (J) [above right=0.2cm and -0.5cm of H] {}; 
\node (K) [above right=0cm and 1cm of B] {}; 
\draw[left hook->>] (B) to node[below] {$\cong$} node[above] {$\Psi_{F}$} (A); 
\draw[left hook->>] (H) to node[below] {$\cong$} node[above] {$\Psi_{F}$}  (E); 
\draw[left hook->] (C) to node[right] {$\mathrm{inc}_{S_F}$} (E);
\draw[left hook->] (E) to node[left] {$\mathrm{inc}_{Z_F}$} (A);
\draw[left hook->] (H) to node[left] {$\mathrm{inc}_{S_F\times(0,\infty)\times \{0\}}$} (B);
\draw[->>] (B) to node[below left] {$\pi_{Z_F}$} (E);
\draw[->>, bend right=20] (A) to node[right] {$\mathrm{ret}_{Z_F}$} (E);
\draw[->>, bend right=20] (B) to node[above right] {$\mathrm{pr}_{S_F\times(0,\infty)}$} (H);
\draw[->>] (H) to node[below left] {$\mathrm{pr}_{S_F}$} (C);
\draw[-, out=30, in=-20] (B) to (J.east);
\draw[->>, out=160, in=-18] (J.east) to node[above right] {$\mathrm{pr}_{S_F}$} (C.east);
\draw[->>] (B) to node[above left] {$\nu_{S_F\times(0,\infty)\times \R}$} (D);
\draw[->>, bend right] (C) to node[left] {$\nu_{S_F}$} (D.west);
\draw[->>, out=225, in=115] (E.west) to node[right] {$\nu_{Z_F}$} (D);
\draw[->>] (A) to node[right] {$\nu_{\Sigma_{F\bullet\bullet}}$}  (D);
\draw[->>, out=-153, in=21] (K.east) to node[below right] {$\nu_{S_F\times(0,\infty)}$} (D.east);
\draw[-, out=-25, in=27] (H) to (K.east);
\end{tikzpicture}
\end{split}
\label{eq:diagram1}
\end{equation}
This is the indefinite version of \eqref{diagram_retract_EFdefinite}. In the diagram, everything commutes except the triangles involving an inclusion followed by the retraction $\mathrm{ret}_{Z_F}$, a projection, or the surjection $\pi_{Z_F}$, which commute only up to $S^1$-equivariant homotopy. Note that the two canonical projections onto $S_F$ are both denoted by $\mathrm{pr}_{S_F}$, whereas each fiber bundle projection onto $F$ has its own name.

\section{Kirwan maps} 
\label{sec:Kirwan} 

In the setting of Section \ref{sec:eqcoh}, let $(M,\omega,\J)$ be a Hamiltonian $S^1$-manifold such that the restriction
of the $S^1$-action to $\J^{-1}(\{0\})$ is locally free, and write
$\textup{inc}_0:\J^{-1}(\{0\})\to M$ for the inclusion map.
The following
classical notion goes back to Kirwan \cite{Kir84},
which we introduce here only for $G=S^1$. 
\begin{prop}\label{Kirwanth}
{\cite[(18)]{kalkman95}}
Assume that $S^1$ acts locally freely on the level set $\J^{-1}(\{0\})\subset M$,
and let $\Theta\in\Omega^1(\J^{-1}(\{0\}),\R)$ be a connection for the $S^1$-action.
Then,  for any $\sigma(x)\in\Omega_{S^1}^*(M)$ 
\begin{equation}\label{Kirwandef}
\kappa(\sigma(X)):=\textup{inc}_0^*\,\sigma\big(\frac{i}{2\pi}d\Theta\big)-\frac{i}{2\pi}\Theta\wedge\left(\widetilde{X}\imult\textup{inc}_0^*\,
\sigma\big(\frac{i}{2\pi}d\Theta\big)\right)
\end{equation}
defines a map of complexes
$\kappa:(\Omega_{S^1}^*(M),d_{\g})\to
(\Omega(\J^{-1}(\{0\}))^{S^1}_{\mathrm{bas}},d)$,
inducing via the isomorphism of Proposition \ref{pi*iso} a morphism
\begin{equation}\label{Kircohwandef}
\kappa:H^\omega_{S^1}(M)\longrightarrow H(\M_0)\,,
\end{equation}
which does not depend on the choice of the connection
$\Theta\in\Omega^1(\J^{-1}(\{0\}),\R)$. The morphism $\kappa$ is called the \emph{Kirwan map}.
\end{prop}
The notation
$\textup{inc}_0^*\,\sigma(\frac{i}{2\pi}d\Theta)\in\Omega^*(\J^{-1}(\{0\}),\R)$
means that we substitute $\frac{i}{2\pi}d\Theta$ for $X$ in  $\textup{inc}_0^*\,\sigma(X)$, taking the wedge product with the coefficients of the power series $\textup{inc}_0^*\,\sigma(X)$.

\subsection{Regular Kirwan map of a singular symplectic quotient}

Let now $(M,\omega,\J)$ be a general Hamiltonian $S^1$-manifold, so that
the symplectic quotient \eqref{M0} is only a stratified space.
To introduce the regular Kirwan maps appearing in Theorem \ref{mainth}, we will
need the following definition.

\begin{definition}\label{connormalform}
A connection $\Theta\in\Omega^1(\J^{-1}(\{0\})^{\mathrm{reg}},\R)$
for the $S^1$-action on $\J^{-1}(\{0\})^{\mathrm{reg}}$ is said
to have \emph{normal form near the singularities} if for each $F\in\F^0_{\mathrm{indef}}$ one has 
\bqn
\Phi_F^\ast (\Theta|_{U_F\cap \J^{-1}(\{0\})^{\mathrm{reg}}}) = \Theta_{Z_F}|_{V_F\cap Z_F},\label{eq:conncompat}
\eqn
where  $U_F\subset M$  is a neighborhood of $F$ as in Proposition 
\ref{prop:localnormform} and $\Theta_{Z_F}\in\Omega^1(Z_F,\R)$
denotes the connection form 
\eqref{eq:04.05.2022a} over $Z_F\subset\Sigma_F$ via \eqref{eq:diagram_withPhiF}. 
\end{definition}
Using a partition of unity, one readily constructs a connection with normal
form near all $F\in\F^0_{\mathrm{indef}}$. 
\begin{definition}\label{Kirwantildedef}
The \emph{regular Kirwan map} is the map of complexes
\begin{equation}\label{Kirwantilde}
\kappa:(\Omega_{S^1}^\omega(M),d_{\g})\longrightarrow
(\Omega(\M_0^{\mathrm{reg}}),d)
\end{equation}
defined by the restriction of the formula \eqref{Kirwandef} to the regular stratum $\J^{-1}(\{0\})^{\mathrm{reg}}$ of the stratification
\eqref{stratiffla} and using a connection 
$\Theta\in\Omega^1(\J^{-1}(\{0\})^{\mathrm{reg}},\R)$ with normal form
near the singularities in the sense of Definition \ref{connormalform}.
\end{definition}

The following result will be used in a crucial way in Section
\ref{fullWitsec} to establish Theorem \ref{thm:wittenasymp} on
the topological interpretation of the asymptotics of
the Witten integral, which forms the core of the proof of Theorem \ref{mainth}.

\begin{lem}\label{Kirwanexact}
Let $\sigma\in\Omega^*_{S^1}(M)$ be such that for every
$F\in\F^0_{\mathrm{indef}}$ we have $\inc_F^*\sigma\equiv 0$.
Then for
all $m\in\N$, we have
\begin{equation}\label{Kirwanexactfla}
\int_{\M_0^{\mathrm{reg}}}e^{m\omega_0}\kappa(d_\g\sigma)=0\,,
\end{equation}
where $\kappa:\Omega_{S^1}^*(M)\to
\Omega(\M_0^{\mathrm{reg}})$ denotes the Kirwan map \eqref{Kirwantilde}.
\end{lem}
\begin{proof}  Let $F\in\F_{\textup{indef}}^0$.  
Recall from \eqref{eq:diagram_withPhiF} that the diffeomorphism $\Phi_F$ of the local normal form of Proposition
\ref{prop:localnormform} sends $Z_F\cap V_F$ to $\J^{-1}(\{0\})^{\mathrm{reg}}\cap U_F$, where $Z_F=Q_F^{-1}(\{0\})\setminus F$ has been introduced in
Equation \eqref{ZFdef}. Keeping in mind the coordinates \eqref{eq:rscoord},
for any small enough $\epsilon>0$ let
$B_{F,\epsilon}\subset \J^{-1}(\{0\})\cap U_F$
be the neighborhood of $F$ defined by
\begin{equation*}\label{BFesp}
B_{F,\epsilon}:=\left\{x\in\J^{-1}(\{0\})~\bigg|~ \Phi_F^{-1}(x)=[\wp,w]\text{ with }
\sqrt{\|w^+\|_{F+}\,\|w^-\|_{F-}}<\epsilon\right\}\,.
\end{equation*}
Let $\Theta\in\Omega^1(\J^{-1}(\{0\})^{\mathrm{reg}},\R)$ be a connection
for the $S^1$-action whose restriction to
$\J^{-1}(\{0\})^{\mathrm{reg}}\cap U_F$ pulls back along $\Phi_F$
to the connection $\Theta_{Z_F}\in\Omega^1(Z_F,\R)$ introduced in \eqref{eq:04.05.2022a}, and write
$\sigma_F:=\inc_F^*\sigma\in\Omega^*_{S^1}(F)$, as well as $\textup{inc}_0:\J^{-1}(\{0\})\to M$ for the inclusion map. 

Using Proposition \ref{intXi=int} and
the fact that the Kirwan map \eqref{Kirwantilde} is a map of complexes
we can now apply the usual Stokes' theorem to get
\begin{equation*}\label{Kirwanexactcomput}
\begin{split}
\int_{\M_0^{\mathrm{reg}}}e^{m\omega_0}\kappa(d_\g\sigma)&=
\int_{\J^{-1}(\{0\})^{\mathrm{reg}}}
e^{m\inc_0^*\omega}d\kappa(\sigma)\wedge\Theta\\
&=\lim_{\epsilon\to 0}
\int_{\J^{-1}(\{0\})\setminus \bigcup_{F\in\F_{\textup{indef}}^0}B_{F,\epsilon}}
e^{m\inc_0^*\omega}d\kappa(\sigma)\wedge\Theta\\
&=\sum_{F\in\F_{\textup{indef}}^0}\lim_{\epsilon\to 0}
\int_{\partial B_{F,\epsilon}}
e^{m\inc_0^*\omega}\kappa(\sigma)\wedge\Theta\\
&\qquad\qquad\qquad\qquad-\lim_{\epsilon\to 0}
\int_{\J^{-1}(\{0\})\setminus \bigcup_{F\in\F_{\textup{indef}}^0}B_{F,\epsilon}}
e^{m\inc_0^\ast\omega}\kappa(\sigma)\wedge d\Theta\\
&=\sum_{F\in\F_{\textup{indef}}^0}\lim_{\epsilon\to 0}
\int_{\partial B_{F,\epsilon}}
e^{m\inc_0^*\omega_0}
\inc_0^\ast\sigma\big(\frac{i}{2\pi}d\Theta\big)\wedge\Theta,
\end{split}
\end{equation*}
where the second term of the third line vanishes since the integrand
is basic for the $S^1$-action and therefore its top form component is zero. Next, we define for all small enough $\eps>0$ the diffeomorphism
\begin{equation*}
\begin{split}
\tilde b_{F,\epsilon}:S_F&\longrightarrow
\Phi_F^{-1}(\partial B_{F,\epsilon})=\Psi_F(S_F\times\{\epsilon\}\times\{0\})\subset Z_F\subset \Sigma_F,\\
p=[\wp,w]&\longmapsto\Psi_F(p,\eps,0)=[\wp,\eps w]
\end{split}
\end{equation*}
analogous to \eqref{bFeps} using the diffeomorphism $\Psi_F$ from \eqref{eq:Psiindef}, and we pull back along $\Phi_F\circ \tilde b_{F,\epsilon}$   to get
\begin{equation}\label{Kirwanexactcomput2}
\int_{\M_0^{\mathrm{reg}}}e^{m\omega_0}\kappa(d_\g\sigma)
=\sum_{F\in\F_{\textup{indef}}^0}\lim_{\epsilon\to 0}
\int_{S_F}e^{m\tilde b_{F,\epsilon}^*\omega_{\Sigma_F}}
\tilde b_{F,\epsilon}^*\Phi_F^*{\sigma_{|U_F}}\big(\frac{i}{2\pi}d\tilde b_{F,\epsilon}^*\Theta_{Z_F}\big)\wedge\tilde b_{F,\epsilon}^*\Theta_{Z_F}.
\end{equation}
We now investigate each of the three pullbacks along $\tilde b_{F,\epsilon}$ on the right-hand side. From  \eqref{omstdsigmaFdef}, \eqref{eq:31.08.22} and \eqref{fkdef} we get as in \eqref{eq:3829380690326} 
\bq
\tilde b_{F,\epsilon}^*\omega_{\Sigma_F}=\eps^2\mathrm{inc}_{S_F}^\ast\omega_{\mathrm{vert}}+\nu_{S_F}^*\,\omega_F.\label{eq:tildebFepsomega}
\eq
Similarly, \eqref{eq:04.05.2022a} implies
\bq
\tilde b_{F,\epsilon}^*\Theta_{Z_F}=\Theta_{S_F}.\label{eq:tildebFepsXiZF}
\eq
Furthermore, we claim that for any differential form $\alpha\in\Omega^*(M)$ we have
\begin{equation}\label{claimepsto0}
\lim_{\epsilon\to 0}\tilde b_{F,\epsilon}^*\Phi_F^*{\alpha_{|U_F}}=
\nu_{S_F}^*\alpha_F\quad \text{in } \Omega^*(S_F),
\end{equation}
where we write $\alpha_F:=\inc_F^*\alpha\in\Omega^*(F)$ and we use on $\Omega^*(S_F)$ the standard Fr\'echet topology of uniform convergence of all derivatives, recalling that $S_F$ is compact. To prove the claim, we first note that it is enough to prove \eqref{claimepsto0}
when $\alpha\in\Omega^0(M)=\Cinft(M)$ is a smooth function and the convergence takes place in the Fr\'echet subspace $\Cinft(S_F)\subset\Omega^*(S_F)$, since \eqref{claimepsto0} is a local formula involving pullbacks which commute with wedge products and the exterior differential $d$, which is a continuous operator $\Omega^*(S_F)\to \Omega^*(S_F)$, and any differential form of positive degree can locally be written
as a sum of wedge products of differentials of smooth functions. Now, by passing to a local trivialization of the fiber bundle $\Sigma_F$ over a local chart of $F$, the claim \eqref{claimepsto0} for $\alpha\in\Cinft(M)$ reduces to the claim that for $n,m\in \N_0$, $m>0$, the operator
\begin{equation*}
\begin{split}
b_\eps:  \CT(\R^n\times \R^m)&\longrightarrow\CT(\R^n\times S^{m-1}), \qquad 
f\longmapsto b_\eps(f)(x,y):=f(x,\eps y)\,,
\end{split}
\end{equation*}
satisfies for each $f\in \CT(\R^n\times \R^m)$ that
\bqn
b_\eps(f)\stackrel{\eps\to 0}{\longrightarrow}f_0 \quad \text{in } \CT(\R^n\times S^{m-1}),
\eqn
where $f_0(x,y):=f(x,0)$. One easily verifies that this holds, finishing the proof of the claim.

Combining \eqref{eq:tildebFepsomega}, \eqref{eq:tildebFepsXiZF}, and \eqref{claimepsto0} allows us to evaluate the limit in \eqref{Kirwanexactcomput2}, yielding
\begin{equation*}\label{Kirwanexactcomput3}
\begin{split}
\int_{\M_0^{\mathrm{reg}}}e^{m\omega_0}\kappa(d_\g\sigma)&
=\sum_{F\in\F_{\textup{indef}}^0}
\int_{S_{F}}e^{m\nu_{S_F}^*\omega_F}
\nu_{S_F}^*\sigma_F\big(\frac{i}{2\pi}d\Theta_{S_F}\big)\wedge\Theta_{S_F} =0\,,
\end{split}
\end{equation*}
since $\sigma_F:=\inc_F^*\sigma\equiv 0$ by assumption.
This concludes the proof.
\end{proof}
Lemma \eqref{Kirwanexact} gives a partial cohomological interpretation
for the regular Kirwan map  \eqref{Kirwantilde}, since it shows that
the integral of the left-hand side of \eqref{Kirwanexactfla} does not depend
on the choice of a connection with normal form near the singularities.
Theorem \ref{thm:wittenasymp} will actually show a posteriori that
the condition $\inc_F^*\sigma\equiv 0$ is not necessary for \eqref{Kirwanexactfla}
to hold.
 
\subsection{Kirwan map of a partial resolution}
\label{Kirwanressec}

In order to compare our results with those in \cite{meinrenken-sjamaar}, let us now discuss the Kirwan map of a partial resolution.
Suppose that $\J^{-1}(\{0\})$ is of indefinite type, and
recall the notations of Section \ref{sec:sympquot}. For any connected component $F\in\F^0$
of the set of fixed points $M^{S^1}$ contained in
$\J^{-1}(\{0\})$
we consider the \emph{complex blow-up}
\begin{equation}\label{locFblowup}
\begin{split}
\beta_{\widetilde{\Sigma}_F}:
\widetilde{\Sigma}_F&\longrightarrow\Sigma_F
\end{split}
\end{equation}
of $\Sigma_F$ along its zero section
$F\subset\Sigma_F$ in the sense of \cite[Section 8]{GS89}, which is equivariant with respect to the $S^1$-action considered in Section \ref{sec:sympquot}.
The \emph{strict transform} of the submanifold
$Z_F\subset\Sigma_F$ introduced in
\eqref{ZFdef} is defined as the closure
$\widetilde{Z}_F:= \overline{\beta_{\widetilde{\Sigma}_F}^{-1}(Z_F)}\subset\widetilde{\Sigma}_F$
and inherits a natural structure of a smooth $S^1$-submanifold of
$\widetilde{\Sigma}_F$.  Setting $\mathbb{F}^0:=\sqcup_{F\in\F^0} F\subset M^{S^1}$ we then define the
\emph{complex blow-up} of $M$ along $\mathbb{F}^0\subset M$
as the unique $S^1$-equivariant map
\begin{equation}\label{blowup}
\beta:\widetilde M\longrightarrow M
\end{equation}
which restricts to an $S^1$-equivariant diffeomorphism over $M\setminus \mathbb{F}^0$ and which,
over each open set $U_F\subset M$ as in Proposition
\ref{prop:localnormform} is given by 
the map 
$\beta_{\widetilde{\Sigma}_F}$ over $V_F$.  Like $\widetilde Z_F$, the strict transform
$\widetilde Z$ of the subset $\J^{-1}(\{0\})^{\mathrm{reg}}\subset M$ is
defined as the closure $\widetilde Z:=\overline{\beta^{-1}
(\J^{-1}(\{0\})^{{\mathrm{reg}}})}\subset\widetilde M$ and 
inherits a natural structure of a smooth
$S^1$-submanifold of $\widetilde M$. Since the $S^1$-action on $\widetilde Z$ is locally free, one is led to the following definition.

\begin{definition}\label{partialdesingdef}
The \emph{partial resolution} of the symplectic quotient $\M_0$ is 
defined as the orbifold
\begin{equation*}
\widetilde\M_0:=\widetilde Z/S^1\,,
\end{equation*}
together with the map $\beta_0:\widetilde \M_0\longrightarrow \M_0$
induced by the map \eqref{blowup} after taking quotients by the $S^1$-action. The unique form
$\widetilde\omega_0\in\Omega^2(\widetilde\M_0,\R)$ satisfying
$$\widetilde\pi_0^*\widetilde\omega_0=\inc_{\widetilde Z}^\ast\beta^*\omega\,,$$
where $\widetilde\pi_0:\widetilde Z\longrightarrow\widetilde\M_0$
denotes the quotient map,
is called the \emph{degenerate symplectic form} of $\widetilde\M_0$.
\end{definition}

Partial resolutions were introduced for arbitrary compact Lie group actions 
by Kirwan in \cite{kirwan85} in the algebraic case and by Meinrenken and Sjamaar in \cite[Section 4.1.2]{meinrenken-sjamaar} in the symplectic case.
As it is explained in \cite[Corollary 3.14]{delarue-schmitt-ramacher23}, our
definition agrees with this definition in the case $G=S^1$.

We now define the following
alternative notion of a Kirwan map for singular symplectic quotients
by an $S^1$-action,  even though $\widetilde M$ does  not admit a canonical structure of  a
Hamiltonian $S^1$-manifold. 

\begin{definition}\label{Kirwanres}
The \emph{resolution Kirwan map} is the map
\begin{equation}\label{eq:10.04.2023}
\widetilde\kappa:=\kappa\circ\beta^*:H^{\omega}_{S^1}(M)\longrightarrow H(\widetilde\M_0)
\end{equation}
where
$\beta^*:H^{\omega}_{S^1}(M)\to H^{\omega}_{S^1}(\widetilde M)$
is induced by the  map \eqref{blowup} and
$\kappa:H^{\omega}_{S^1}(\widetilde{M})\longrightarrow H(\widetilde\M_0)$
is defined at the level of complexes by the formula
\eqref{Kirwandef} with $\inc_0:\J^{-1}(\{0\})\to M$ replaced by the inclusion map 
$\inc_{\widetilde Z}:\widetilde Z\to\widetilde M$.
\end{definition}
The following result gives another topological interpretation of the regular Kirwan
map \eqref{Kirwantilde}, strenghtening Lemma \ref{Kirwanexact}
under an appropriate combinatorial condition 
on the $S^1$-action around the fixed points.

\begin{lem}\label{lem:25.04.2023}
With the notation of Section \ref{sec:stratif}, assume that 
the weights of the $S^1$-action satisfy
the condition
\begin{equation}\label{weightcondext}
\frac{\# W^+}{\sum_{k\in W^+}|k|}=\frac{\# W^-}{\sum_{k\in W^-}|k|}
\end{equation}
for each $F\in\F^0_{\mathrm{indef}}$.
Then, for any connection $\Theta\in\Omega^1(\J^{-1}(\{0\})^{\mathrm{reg}},\R)$ with
normal form near the singularities in the sense of Definition
\ref{connormalform} there exists a connection
$\widetilde \Theta\in\Omega^1(\widetilde Z,\R)$
for the $S^1$-action on the strict transform $\widetilde Z$ satisfying
\begin{equation}\label{lemblupfla}
{\widetilde\Theta}|_{\beta^{-1}(\J^{-1}(\{0\})^{\mathrm{reg}})}=
(\beta|_{\beta^{-1}(\J^{-1}(\{0\})^{\mathrm{reg}})})^*\Theta\,.
\end{equation}
In  particular, for any $\sigma\in\Omega_{S^1}^*(M)$ we have
\begin{equation}\label{kir=tildekir}
\int_{\M_0^\text{reg}}\,e^{\omega_0}\kappa(\sigma)
=\int_{\widetilde\M_0}\,e^{\widetilde\omega_0}\widetilde\kappa(\sigma)\,.
\end{equation}
\end{lem}
\begin{proof}
Let $F\in\F^0_{\mathrm{indef}}$ and equip the complex vector bundle $\nu_F:\Sigma_F\to F$ with the Hermitian norm $\frac{1}{\sqrt{2}}\|\cdot\|_F$ defined by Equation \eqref{eq:Fnorm}. Write
$\nu_{\mathscr{S}_F}:\mathscr{S}_F\to F$ for
the associated unit sphere bundle and consider the natural diffeomorphism
\begin{equation}\label{PsiFblowup}
\begin{split}
\Psi_{\mathscr{S}_F}:\mathscr{S}_F\times(0,\infty)&
\xrightarrow{~\sim~}\Sigma_F\backslash F\\
(w,t)&\longmapsto tw\,.
\end{split}
\end{equation}
Then, the bi-ellipsoid bundle $S_F\to F$ defined in 
\eqref{SFdef} satisfies $S_F\subset\mathscr{S}_F$. Next, using the notation \eqref{alphaFdef},  let us 
set
\begin{equation}\label{extformalpha}
\Theta_{\mathscr{S}_F}:=\frac{1}{2}\inc_{\mathscr{S}_F}^*(\alpha^+_F+\alpha^-_F) \in\Omega^1(\mathscr{S}_F,\R)\,.
\end{equation}
Comparing with Equations
\eqref{eq:XiF} and \eqref{eq:connS_Fdefinite} one gets the identity $\Theta_{S_F}=\inc_{S_F}^*\Theta_{\mathscr{S}_F}$,
while via the
restricted diffeomorphism \eqref{eq:PsiZF} one has
$\Psi_F^*\Theta_{Z_F}=\pr_{S_F}^*\Theta_{S_F}$.
On the other hand, by Definitions \eqref{eq:connform0} and \eqref{eq:alphapm} one readily checks that 
\eqref{extformalpha} is basic
for the $S^1$-action over $\mathscr{S}_F$ induced by multiplication with
$e^{2\pi i\theta}\in\C^*$ for all $\theta \in \R/\Z$  if and only if
the condition
\eqref{weightcondext} is satisfied.  Write
$\vartheta_F\in\Omega^1(\Sigma_F\backslash F,\R)$ for the unique form
over the complement of the zero section inside $\Sigma_F$
satisfying $\Psi_{\mathscr{S}_F}^*\vartheta_F
=\pr_{\mathscr{S}_F}^*\Theta_{\mathscr{S}_F}$.
Then, according to \cite[Section 11]{GS89} there exists a unique form
$\widetilde\vartheta_F\in\Omega^1(\widetilde{\Sigma}_F,\R)$  satisfying
\begin{equation*}
\widetilde\vartheta_F|_{\beta_{\widetilde \Sigma_F}^{-1}(\Sigma_F\backslash F)}
=(\beta_{\widetilde \Sigma_F}|_{\beta_{\widetilde \Sigma_F}^{-1}(\Sigma_F\backslash F)})^*\vartheta_{F}\,.
\end{equation*}
By restricting $\widetilde \vartheta_F$ to 
$\widetilde{Z}_F\subset\widetilde{\Sigma}_F$ one gets
a form ${\Theta}_{\widetilde Z_F}\in\Omega^1(\widetilde{Z}_F,\R)$
satisfying
$$ \Theta_{\widetilde Z_F}|_{\beta_{\widetilde \Sigma_F}^{-1}(Z_F)}
=(\beta_{\widetilde \Sigma_F}|_{\beta_{\widetilde \Sigma_F}^{-1}(Z_F)})^*\Theta_{Z_F}.
$$
In view of Definition \ref{connormalform} of a connection
$\Theta\in\Omega^1(\J^{-1}(\{0\})^{\mathrm{reg}},\R)$ with normal form
near the singularities this concludes the proof of \eqref{lemblupfla}. 
Formula \eqref{kir=tildekir} is then a
straightforward consequence of the Definitions \ref{Kirwantildedef}
and \ref{Kirwanres} of the regular and resolution Kirwan maps,
using the fact that $ \beta^{-1}_0(\M_0^\text{reg})$
is of full measure inside $\widetilde\M_0$.
\end{proof}

Lemma \ref{lem:25.04.2023} shows that, under the combinatorial condition
\eqref{weightcondext},
the first term in \eqref{mainthfla} can be interpreted topologically in terms of the
partial resolution $\widetilde \M_0$ of $\M$ introduced in Definition
\ref{partialdesingdef}. We will show in Section \ref{sec:ex} that this condition
is actually realised on a large class of examples.

\begin{rem} \label{rem:27.04.2023} It is instructive to compare the resolution Kirwan
map of Definition \ref{Kirwanres} with the Kirwan map considered by Jeffrey, Kiem,
Kirwan, and Woolf in \cite{JKKW03} in the case $G=S^1$.
They work in the complex case, so that $\M_0$ is
obtained as a GIT quotient by a $\C^*$-action of a smooth
projective variety $M$, and work instead with the so-called
\emph{intersection cohomology} $IH^{*}(\M_{0})$ of the singular quotient.
Relying on the fact that the intersection cohomology naturally
occurs as a summand inside the
usual cohomology $H^{*}(\widetilde\M_{0})$ of the partial resolution
$\widetilde\M_0$ of Definition \ref{partialdesingdef},
they consider the canonical map
\begin{equation}\label{KirwanIH}
\kappa_{IH}\colon H^{*}_{G}(M)\longrightarrow H^{*}(\widetilde\M_{0})\longrightarrow IH^{*}(\M_{0}),
\end{equation}
obtained by composition of the resolution Kirwan map
$\widetilde\kappa:H^{\omega}_{S^1}(M)\longrightarrow H(\widetilde\M_0)$
of Definition \ref{Kirwanres} with the canonical projection
from $H^{*}(\widetilde\M_{0})$ onto the summand $IH^{*}(\M_{0})$. Nevertheless, 
 in the general symplectic setting, the relation between
$IH^{*}(\M_{0})$ and $H^{*}(\widetilde\M_{0})$ is not as clear as
in the complex setting considered in \cite{JKKW03}, and
there might not be  a canonical choice of an intersection Kirwan map in general. 
\end{rem}

 \section{Asymptotic expansion of the Witten integral}
 \label{sec:I}

We shall now introduce  the Witten integral, which is our main tool in our approach to geometric quantization
of singular symplectic quotients. We work in the setting of Section
\ref{sec:stratif}, so that $G=S^1$ and the identification $\g\simeq\R$
of \eqref{identfla} is understood. Recall in particular the identification
\eqref{OmS1} of $\Omega_{S^1}^\ast(M)$ with the space of $S^1$-invariant
differential forms with values in entire analytic series of the variable $x\in\R$,
and the identification
\eqref{JX=xJ} of the moment map with a real-valued function $\J:M\to\R$.

\begin{definition}\label{def:mainint}
For any equivariantly closed $\varrho \in \Omega_{S^1}^\ast(M)$
and any test function $\phi \in \CT(\R)$,
the associated \emph{Witten integral} depending on a parameter $m\in\N$ is given by the formula
\bq
\label{eq:mainint}
\<\W_m(\varrho),\phi\>:=\int_{\R} \left[\int_{M} e^{2\pi imx\J} e^{m \omega} \varrho(x) \right]  \phi(x) \d x\,.
\eq
\end{definition}
The precise form of the exponents in \eqref{eq:mainint} is determined by our conventions in \eqref{eq:sign24} and  \eqref{eq:dg}, as it is crucial that
$\bar \omega(X):=\omega+2\pi i\J(X)\in\Omega^*_{S^1}(M)$ is equivariantly closed. 
  In  \eqref{eq:mainint},
$\phi \in \CT(\R)$ plays the role of a test function at which the distribution
$\W_m(\varrho)\in \mathcal \D'(\R) $ is evaluated, and this distribution is the object that we are primarily interested in, the main goal being a description of the asymptotic behavior of $\W_m(\varrho) $ as $m \to \infty$.

The analytic difficulties underlying the study of the asymptotic behavior of the Witten integral have already been overcome in the previous work \cite{kuester-ramacher20}.  In that work the problem of deriving asymptotics of  \eqref{eq:mainint} as $m\to \infty$  is considered in the more general setting of studying the asymptotic behavior of so-called \emph{generalized Witten integrals} 
\bq
\int_{\R} \int_{M} e^{i\psi(p,x)/\eps } a(p) \d M(p)\phi(x)\d x\label{int}
\eq
as $\eps  \to 0^+$,  with amplitudes  $a\in \CT(M)$, $\phi \in \CT(\R)$,
and phase function $\psi\in \Cinft(M\times\R)$ given by $\psi(p,x):=2\pi\J(p)(x)$, $dM:={\omega^n}/{n!}$ being the symplectic volume form on $M$.  Since the critical set  
 \begin{align*}
 \label{eq:4}
 \Crit(\psi)&:=\mklm{ (p,x) \in {M} \times \R: d\psi (p,x) =0}=\{(p,x)\in \J^{-1}(\{0\}) \times \R: \widetilde X_p=0 \}
\end{align*}
is not clean unless $0$ is a regular value of $\J$, the stationary phase principle cannot be applied. Instead, a complete asymptotic expansion for integrals of the form \eqref{int}  was derived in \cite[Theorem 1.1]{kuester-ramacher20} via a process called \emph{destratification}. From this,  the existence of an expansion of \eqref{eq:mainint}, and also some rough properties of its coefficients, can be inferred. However, within that more general framework the equivariant-cohomological interpretation of the coefficients in the obtained expansion remains elusive. Therefore, we shall not use the results derived there but proceed  from scratch,  based on the fact that the amplitude in the Witten integral is an equivariantly closed form, which allows a simpler and concise treatment.

\subsection{Preliminaries}\label{prel]}

 We begin now with the study of the asymptotics of the Witten integral 
\ref{def:mainint}. Conceptually, we will follow
the method of Meinrenken in \cite[Proof of Theorem\ 3.3]{meinrenken96}, the crucial 
idea being a retraction onto the zero level set of the moment map and the use of the
equivariant homotopy Lemma \ref{eqHom} and equivariant Stokes' Lemma \ref{eqStokes}, 
combined with the classical stationary phase principle. But since we do not assume
zero to be a regular value of the moment map, the situation is much more involved.
In fact, we will combine a retraction onto the regular stratum of $\J^{-1}(\mklm{0})$ with 
retractions onto the several components of the singular stratum of $\J^{-1}(\mklm{0})$. 

As a first step, we choose -- once and for all -- for each $F\in \F$ open sets $U_F\subset M$ and $V_F\subset \Sigma_F$  as in Proposition \ref{prop:localnormform}. For technical purposes, we choose them small enough such that the symplectic form \eqref{sympSigmaF} is actually non-degenerate on a slightly larger tubular neighborhood $\widetilde V_F\subset\Sigma_F$ of the zero section of $\Sigma_F$ containing $\overline V_F$, so that Proposition \ref{prop:localnormform} applies to a corresponding neighborhood $\widetilde U_F\subset M$ containing $\overline U_F$ with a symplectomorphism $\widetilde \Phi_F:\widetilde V_F\to\widetilde U_F$ extending $\Phi_F$, while keeping the sets $\widetilde U_F$ obtained for the different $F$ disjoint. This setup will be kept fixed in all of the following. 

\begin{lem}\label{lem:globalretraction} 
There exists $\delta>0$ and an $S^1$-invariant open neighborhood $W\subset M\setminus M^{S^1}$ of $\J^{-1}(\{0\})^{\mathrm{reg}}$
together with a retraction $\mathrm{ret}_0:W\to \J^{-1}(\{0\})^{\mathrm{reg}}$
and an $S^1$-equivariant diffeomorphism 
\begin{equation}\label{PhiW}
\begin{split}
\Phi_W:W&\xrightarrow{\,\sim\,}\J^{-1}(\{0\})^{\mathrm{reg}}\times(-\delta,\delta),\qquad p\longmapsto(\mathrm{ret}_0(p),\J(p)),
\end{split}
\end{equation}
such that for all $F\in\F^0_\mathrm{indef}$ we have $\Phi_F^{-1}(U_F\cap W)\subset \Sigma_{F\bullet\bullet}$ and
\begin{equation}\label{retVF}
\Phi_F^{-1}\circ( \mathrm{ret}_{0}|_{U_F\cap W} )
=\mathrm{ret}_{Z_F}\circ\Phi_F^{-1}|_{U_F\cap W},
\end{equation}
while $W\cap \widetilde{U}_F=\emptyset$ for all $F\in \F^0_\mathrm{def}$.
\end{lem}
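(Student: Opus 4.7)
The overall strategy is to build $\mathrm{ret}_0$ as the $\J$-adjusted flow of an $S^1$-invariant vector field $Y$ defined on a neighborhood of $\J^{-1}(\{0\})^{\mathrm{reg}}$ in $M\setminus M^G$ and satisfying $Y\cdot\J\equiv 1$, chosen so that in the local normal form of Proposition \ref{prop:localnormform} around each indefinite fixed-point component it coincides with the coordinate field $\partial_q$ in the bi-spherical chart $\Psi_F$ from Section \ref{sec:applicationtonormal}. Given such a $Y$, one sets $\mathrm{ret}_0(p):=\phi^Y_{-\J(p)}(p)$ so that $\J\circ\mathrm{ret}_0\equiv 0$, and $\Phi_W:=(\mathrm{ret}_0,\J)$ is the inverse of the flow map $(q,t)\mapsto\phi^Y_t(q)$.

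First I would deal with the definite fixed points: for $F\in\F^0_\mathrm{def}$ the form $Q_F$ is definite, so Proposition \ref{prop:localnormform} forces $\J^{-1}(\{0\})^{\mathrm{reg}}\cap\widetilde U_F=\emptyset$, and compactness then allows the choice of $\delta_0>0$ small enough that $\{|\J|<\delta_0\}\setminus\bigcup_{F'\in\F^0_\mathrm{def}}\widetilde U_{F'}$ is an $S^1$-invariant open neighborhood of $\J^{-1}(\{0\})^\mathrm{reg}$ in $M\setminus M^G$, inside which the construction takes place. For each $F\in\F^0_\mathrm{indef}$ I put $Y_F:=(\Phi_F\circ\Psi_F)_*\partial_q$ on a suitable open subset of $\widetilde U_F$; by Proposition \ref{prop:localnormform} combined with \eqref{eq:QFnorms} and \eqref{eq:rscoord} one has $\J\circ\Phi_F\circ\Psi_F=q$, whence $Y_F\cdot\J=1$ and $Y_F$ is $S^1$-invariant, and a direct calculation from \eqref{eq:retractionQslit}, \eqref{eq:Psi} and \eqref{eq:rscoord} shows that flowing backward by $\J(p)$ along $Y_F$ reproduces the retraction $\mathrm{ret}_{Z_F}$ after pullback by $\Phi_F$, which will yield \eqref{retVF}. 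Outside the tubes, any $S^1$-invariant Riemannian metric on $M\setminus M^G$ furnishes $Y_0:=\nabla\J/\|\nabla\J\|^2$, well defined near $\J^{-1}(\{0\})^\mathrm{reg}$ since $d\J\neq 0$ there and satisfying $Y_0\cdot\J=1$. An $S^1$-invariant partition of unity $\{\chi_F\}_{F\in\F^0_\mathrm{indef}}\cup\{\chi_0\}$ with $\chi_F\equiv 1$ on $U_F$ and $\supp\chi_F\subset\widetilde U_F$ then gives $Y:=\chi_0Y_0+\sum_F\chi_FY_F$; because the constraint $Y\cdot\J=1$ is affine it survives the convex combination, and $Y=Y_F$ holds identically on $U_F$.

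It remains to exhibit $\delta>0$ such that $(q,t)\mapsto\phi^Y_t(q)$ is an $S^1$-equivariant diffeomorphism from $\J^{-1}(\{0\})^\mathrm{reg}\times(-\delta,\delta)$ onto an $S^1$-invariant open set $W$; local diffeomorphism at $t=0$ follows from the splitting $TM|_{\J^{-1}(\{0\})^\mathrm{reg}}=T\J^{-1}(\{0\})^\mathrm{reg}\oplus\R Y$, and once a uniform lifetime is in hand, the properties $W\cap\widetilde U_F=\emptyset$ for $F\in\F^0_\mathrm{def}$, $\Phi_F^{-1}(U_F\cap W)\subset\Sigma_{F\bullet\bullet}$ for $F\in\F^0_\mathrm{indef}$ (since the $Y_F$-flow preserves the bi-spherical coordinates of $\Psi_F$), and the identity \eqref{retVF} are all immediate from the construction. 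The main obstacle will be securing this uniform $\delta$: $\J^{-1}(\{0\})^\mathrm{reg}$ is noncompact and accumulates at the indefinite fixed-point components, but inside each $\widetilde U_F$ the explicit model $Y_F=\partial_q$ has a flow complete in the $q$-direction, while on the complement of the inner tubes $\overline{U_F}$ inside $\{|\J|\le\delta_0\}$ compactness supplies the required uniform lower bound on both the lifetime and the injectivity radius of the flow map.
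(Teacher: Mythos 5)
Your proposal is correct and follows essentially the same route as the paper: a vector field transverse to $\J^{-1}(\{0\})^{\mathrm{reg}}$ with $d\J(\xi)=1$ that coincides with $\partial/\partial q$ in the bi-spherical normal-form coordinates near each indefinite component, whose flow for a uniformly small time furnishes $\Phi_W$ and $\mathrm{ret}_0$, the uniform $\delta$ coming from compactness of the part of the zero level away from the tubes together with the explicit $q$-translation inside them, and the definite tubes being avoided since $\J^{-1}(\{0\})^{\mathrm{reg}}$ is disjoint from them. The only (cosmetic) difference is that the paper produces this field globally as $\mathrm{grad}\,\J/\|\mathrm{grad}\,\J\|_{g^{TM}}^{2}$ for an $S^1$-invariant metric chosen to agree with $g^{T\Sigma_F}$ in the tubes, so that it is automatically $\partial/\partial q$ there, whereas you patch the pushed-forward $\partial/\partial q$ with a generic normalized gradient by an invariant partition of unity.
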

\begin{proof}
Let $g^{TM}$ be an $S^1$-invariant Riemannian metric on $M$ satisfying
$\Phi_F^*(g^{TM}|_{\widetilde{U}_F})=g^{T\Sigma_F}$ in the local normal form coordinates of Proposition \ref{prop:localnormform} for all $F\in\F$. Write
$\mathrm{grad}\,\J\in\Cinft(M,TM)$ for the associated
Riemannian gradient of $\J:M\to\R$,
and let $\xi\in\Cinft(M\backslash M^{S^1},TM)$ be the vector field
\begin{equation}\label{xi}
\xi:=\frac{\mathrm{grad}\,\J}{\|\mathrm{grad}\,\J\|_{g^{TM}}^2},
\end{equation}
where we use that the Hamiltonian $S^1$-action is locally free on $M\backslash M^{S^1}$,
so that $\mathrm{grad}\,\J$ is nowhere vanishing on $M\backslash M^{S^1}$. The vector field
\eqref{xi} is transverse to $\J^{-1}(\{0\})^{\mathrm{reg}}$,
satisfies $d\J\imult \xi=1$ over $M\backslash M^{S^1}$, and in view of 
\eqref{eq:rscoord}  is mapped to the vector field $\frac{\partial}{\partial q}$ 
by the diffeomorphism of Proposition \ref{prop:localnormform} over $\widetilde U_F\subset M$  
for each $F\in\F^0_{\mathrm{indef}}$.  From this explicit description near each $F\in\F^0_{\mathrm{indef}}$ and the fact that the set 
$K:=\J^{-1}(\{0\})^{\mathrm{reg}}\cap(M\setminus\bigcup_{F\in \F^0_{\mathrm{indef}}} U_F)$ is compact, 
it follows that there is a $\delta>0$ such that the flow $\Phi_t^{\xi}$ of $\xi$ is defined on  $\J^{-1}(\{0\})^{\mathrm{reg}}$ for all $t\in (-\delta,\delta)$. Consequently,  the smooth map
\begin{equation}\label{Phixi}
\begin{split}
\Phi^{\xi}:\J^{-1}(\{0\})^{\mathrm{reg}}\times(-\delta,\delta)&
\longrightarrow M\\
(p,t)&\longmapsto\Phi_t^{\xi}(p)\,,
\end{split}
\end{equation}
is a diffeomorphism onto its image satisfying $\J(\Phi^{\xi}(p,t))=t$ for all $p\in \J^{-1}(\{0\})^{\mathrm{reg}}$ and $t\in(-\delta,\delta)$. 
Setting $W:=\mathrm{im}(\Phi^{\xi})\subset M$, we  define the diffeomorphism \eqref{PhiW} to be the inverse of
\eqref{Phixi}. Recalling the definition of $\mathrm{ret}_{Z_F}$ in \eqref{eq:retZF}, all claimed properties except the last one are satisfied by construction. Finally, note that $K$ is disjoint from $\bigcup_{F\in \F^0_{\mathrm{def}}} \widetilde U_F$ simply  because $\J^{-1}(\{0\})^{\mathrm{reg}}$ is disjoint from $\widetilde U_F$ when $F$ is definite. So it suffices to take $\delta$ small enough to get that $W\cap \widetilde{U}_F=\emptyset$ for all $F\in \F^0_\mathrm{def}$. 
\end{proof}

The following simple consequence of the equivariant Stokes' Lemma \ref{eqStokes}
and the equivariant homotopy Lemma \ref{eqHom} will allow us to greatly
simplify the computations of the next section.

\begin{lem}\label{assumption} The Witten integral of Definition \ref{def:mainint}
only depends
on the equivariant cohomology class of $\varrho\in\Omega_{S^1}^\ast(M)$ inside
$H^\omega_{S^1}(M)$. Moreover, each equivariant cohomology class in
$H^\omega_{S^1}(M)$ has a representative $\varrho\in \Omega_{S^1}^\ast(M)$ satisfying
\begin{equation}\label{eq:12.5.2021}
\Phi_{F}^\ast(\varrho|_{U_F})
= (\nu_{\Sigma_F}^\ast\mathrm{inc}_F^\ast \varrho)|_{V_F}\quad\text{for all}\;F\in
\F^0
\end{equation}
and 
\begin{equation}\label{retractform}
\varrho|_W=\ret_{0}^*\inc_0^*\varrho,
\end{equation}
 where $\ret_{0}:W\to \J^{-1}(\{0\})^{\mathrm{reg}}$ is as in Lemma \ref{lem:globalretraction} with a suitable $\delta>0$ and  $\textup{inc}_0:\J^{-1}(\{0\})^{\mathrm{reg}}\to M$ denotes the inclusion map.
\end{lem}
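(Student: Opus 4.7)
The plan is to handle the two assertions separately. For cohomological invariance, I first observe that $\bar\omega(X):=\omega+2\pi i\,\J(X)$ is equivariantly closed: using the moment map identity $d\J(X)=-\widetilde X\imult\omega$ of \eqref{eq:sign24}, a direct computation gives $d_\g\bar\omega(X)=0$. Since $\bar\omega$ has only even-degree components, the Leibniz rule yields $d_\g(e^{m\bar\omega})=0$, so for any $\eta\in\Omega^\ast_{S^1}(M)$ one has $e^{m\bar\omega}\wedge d_\g\eta=d_\g(e^{m\bar\omega}\wedge\eta)$. Rewriting the Witten integral as $\langle\W_m(\varrho),\phi\rangle=\int_\R\bigl[\int_M e^{m\bar\omega(X)}\varrho(X)\bigr]\phi(x)\,dx$ and applying the equivariant Stokes lemma (Lemma \ref{eqStokes} with $U=M$, $\partial\bar U=\emptyset$) to the inner integral, the contribution of any $\varrho=d_\g\eta$ vanishes for every $x\in\R$, proving that $\W_m$ factors through $H^\omega_{S^1}(M)$.

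For the existence of a representative satisfying \eqref{eq:12.5.2021} and \eqref{retractform}, I proceed in two steps. \emph{Step A:} For each $F\in\F^0$, apply Lemma \ref{eqHom} with $N=F$ and $\Phi_N=\widetilde\Phi_F:\widetilde V_F\to\widetilde U_F$ to produce $\tilde\beta_F\in\Omega^\ast_{S^1}(\widetilde V_F)$ with $\inc_F^\ast\tilde\beta_F=0$ and $\widetilde\Phi_F^\ast\varrho|_{\widetilde V_F}=\nu_{\Sigma_F}^\ast\inc_F^\ast\varrho|_{\widetilde V_F}+d_\g\tilde\beta_F$. Push $\tilde\beta_F$ down to $\widetilde U_F$ via $\widetilde\Phi_F^{-1}$, multiply by a cutoff $\chi_F\in\CT(\widetilde U_F)$ equal to $1$ on a neighborhood of $\overline{U_F}$, extend by zero to $M$, and set
\[
\varrho':=\varrho-\sum_{F\in\F^0}d_\g\bigl(\chi_F\cdot(\widetilde\Phi_F^{-1})^\ast\tilde\beta_F\bigr).
\]
As the $\widetilde U_F$ are pairwise disjoint and $\chi_F\equiv 1$ on a neighborhood of $\overline{U_F}$, the subtracted $d_\g$-term coincides under $\Phi_F^\ast$ with $d_\g\tilde\beta_F$ on $V_F$, cancelling the obstruction, and since $\inc_F^\ast\tilde\beta_F=0$ one also has $\inc_F^\ast\varrho'=\inc_F^\ast\varrho$, so $\varrho'$ satisfies \eqref{eq:12.5.2021}.

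\emph{Step B:} Apply Lemma \ref{eqHom} to $\varrho'$ with $N=\J^{-1}(\{0\})^{\mathrm{reg}}$ using a neighborhood $W_0$ from Lemma \ref{lem:globalretraction} with some $\delta_0>0$ chosen small enough that $W_0\cap\widetilde U_F=\emptyset$ for every $F\in\F^0_\mathrm{def}$, trivialized by $\Phi_{W_0}^{-1}$. This yields $\beta_{W_0}\in\Omega^\ast_{S^1}(W_0)$ with $\inc_0^\ast\beta_{W_0}=0$ and $\varrho'|_{W_0}=\ret_0^\ast\inc_0^\ast\varrho'|_{W_0}+d_\g\beta_{W_0}$. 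The key compatibility: for $F\in\F^0_\mathrm{indef}$, the relation $\Phi_F^{-1}\circ\ret_0=\ret_{Z_F}\circ\Phi_F^{-1}$ of Lemma \ref{lem:globalretraction}, combined with $\nu_{Z_F}\circ\ret_{Z_F}=\nu_{\Sigma_F}|_{\Sigma_{F\bullet\bullet}}$ and the identity \eqref{eq:12.5.2021} already achieved for $\varrho'$, directly forces $\varrho'|_{U_F\cap W_0}=\ret_0^\ast\inc_0^\ast\varrho'|_{U_F\cap W_0}$. Hence, by the refined vanishing clause in Lemma \ref{eqHom}, $\beta_{W_0}$ can be chosen to vanish on $U_F\cap W_0$ for every $F\in\F^0_\mathrm{indef}$.

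Finally, pick $\delta<\delta_0$ yielding $W\subset W_0$, choose $\chi_W\in\CT(W_0)$ equal to $1$ on $\overline W$, and set $\varrho'':=\varrho'-d_\g(\chi_W\beta_{W_0})$, extended by zero to $M$. The vanishing of $\beta_{W_0}$ on $U_F\cap W_0$ for indefinite $F$, together with $\chi_W\equiv 0$ on $\widetilde U_F$ for definite $F$, ensures $\varrho''=\varrho'$ in a neighborhood of each $F\in\F^0$, so condition \eqref{eq:12.5.2021} is preserved. On $W$, where $\chi_W\equiv 1$, one has $\varrho''|_W=\ret_0^\ast\inc_0^\ast\varrho'|_W$; and since $\inc_0^\ast(\chi_W\beta_{W_0})=\chi_W|_{\J^{-1}(\{0\})^{\mathrm{reg}}}\cdot\inc_0^\ast\beta_{W_0}=0$, we deduce $\inc_0^\ast\varrho''=\inc_0^\ast\varrho'$, which yields \eqref{retractform} for $\varrho''$. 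The main technical obstacle, which is precisely resolved by the refined vanishing clause of Lemma \ref{eqHom}, is guaranteeing that the second homotopy correction does not disturb the normal-form normalization near the fixed components achieved in Step A.
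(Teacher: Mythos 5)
Your opening (cohomological invariance via the equivariant Stokes lemma) and your Step A coincide with the paper's argument, and the two key ideas of your Step B — the compatibility $\varrho'|_{U_F\cap W_0}=\ret_0^\ast\inc_0^\ast\varrho'$ forced by \eqref{retVF} together with \eqref{eq:12.5.2021}, and the refined vanishing clause of Lemma \ref{eqHom} giving $\beta_{W_0}\equiv 0$ on $U_F\cap W_0$ — are exactly the ones the paper uses. The gap is in your final globalization. A cutoff $\chi_W\in\CT(W_0)$ with $\chi_W\equiv 1$ on $\overline W$ does not exist as soon as $\F^0_{\mathrm{indef}}\neq\emptyset$: the regular stratum $\J^{-1}(\{0\})^{\mathrm{reg}}$ accumulates at every indefinite component $F$ (take $[\wp,w]\in Z_F$ with $w\to 0$), so the closure $\overline W$ in $M$ contains points of $F\subset M^{S^1}$, which lie outside $W_0\subset M\setminus M^{S^1}$; moreover $\overline W\cap W_0$ is not compact, so no function compactly supported in $W_0$ can be identically $1$ on it. As a consequence, $\chi_W\beta_{W_0}$ ``extended by zero'' is not visibly smooth, and the identity $d_\g(\chi_W\beta_{W_0})=d_\g\beta_{W_0}$ on $W$, which you need for \eqref{retractform}, is not secured by the stated choice.

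The repair is exactly what your vanishing statement makes possible and what the paper does: require the cutoff to equal $1$ only on (a neighborhood of) the set $\overline W\setminus\bigcup_{F\in\F^0}U_F$ — on $W\cap U_F$ no cutoff condition is needed because $\beta_{W_0}\equiv 0$ there, so the retraction identity already holds exactly. But one must then show that $\overline W\setminus\bigcup_{F\in\F^0}U_F$ is a compact subset of $M\setminus M^{S^1}$ contained in the larger neighborhood carrying the homotopy form; the paper arranges this by a two-radius argument (apply Lemma \ref{lem:globalretraction} with $\widehat\delta>\delta$, then shrink $\delta$ keeping $\widehat\delta$ fixed so that $\widehat W\supset\overline W\setminus\bigcup_{F\in\F^0}U_F$). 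In your single-neighborhood version this containment is not automatic from $\delta<\delta_0$ and is currently missing. With that adjustment your argument becomes essentially the paper's proof.
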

\begin{proof} The first claim is standard. Indeed, let $\varrho\in\Omega_{S^1}^\ast(M)$ be equivariantly closed. In view of the fact that $ e^{2\pi im\J(x)} e^{m \omega}\in\Omega_{S^1}^\ast(M)$ is equivariantly closed, Lemma \ref{eqStokes} implies  for any equivariant form
$\gamma \in  \Omega_{S^1}^\ast(M)$ that 
\begin{equation*}\label{Witteninvcoh}
\begin{split}
\<\W_m(\varrho+d_{\g}\gamma),\phi\>&=
\int_{\R} \left[\int_{M} e^{2\pi im x \J} e^{m \omega} \varrho(x)
+\int_{M} e^{2\pi imx\J} e^{m \omega} d_{\g}\gamma(x) \right] 
\phi(x) \d x\\
&=\int_{\R} \left[\int_{M} e^{2\pi imx\J} e^{m \omega} \varrho(x) 
+\int_{M} d_{\g}\big(e^{2\pi im x \J} e^{m \omega} \gamma(x)\big) \right] 
\phi(x) \d x\\
&=\int_{\R} \left[\int_{M} e^{2\pi imx\J} e^{m \omega} \varrho(x) \right] 
\phi(x) \d x=\<\W_m(\varrho),\phi\>,
\end{split}
\end{equation*}
proving the first claim of the lemma.

Let us consider now an arbitrary equivariantly closed
$\widetilde\varrho\in\Omega_{S^1}^\ast(M)$. To prove the remaining assertions
of the lemma,
we need to construct an equivariantly closed $\varrho\in\Omega_{S^1}^\ast(M)$
in the cohomology class $[\widetilde \varrho]\in H^\omega_{S^1}(M)$ satisfying
\eqref{eq:12.5.2021} and \eqref{retractform}.  Using Lemma \ref{eqHom}  with 
$N=F$ and $U_N=\widetilde U_F$  we get for each $F\in \F^0$  
an equivariant form $\gamma_F\in\Omega_{S^1}^\ast(\widetilde V_F)$ such that
\begin{equation}\label{eq:1.1.2023}
\widetilde \Phi_{F}^\ast(\widetilde\varrho|_{\widetilde U_F})=(\nu_{\Sigma_F}^\ast\inc_F^*\widetilde\varrho)|_{\widetilde V_F}+d_{\g}\gamma_F\,.
\end{equation}
Let $\chi_F\in\Cinft(M)$ have compact support in $\widetilde U_F$ and be identically equal to $1$ on $U_F$. Setting
\begin{equation*}
 \gamma:=\sum_{F\in \F^0}\chi_F (\widetilde\Phi_F^{-1})^\ast\gamma_F\in\Omega_{S^1}^\ast(M)\,,   
\end{equation*}
the equivariantly closed form
$\widehat \varrho:=\widetilde\varrho-d_{\g}\gamma\in\Omega_{S^1}^\ast(M)$
satisfies $\widetilde\Phi_{F}^\ast( \widehat\varrho|_{U_F})
= (\nu_{\Sigma_F}^\ast\inc_F^\ast \widetilde \varrho)|_{V_F}$. But
$\nu_{\Sigma_F}\circ \widetilde \Phi_F^{-1}\circ \mathrm{inc}_F=\mathrm{inc}_F$,
so that \eqref{eq:1.1.2023} implies for each $F\in \F^0$
$$
\mathrm{inc}_F^\ast\, d_{\g}\gamma=\mathrm{inc}_F^*\widetilde\varrho-\mathrm{inc}_F^\ast(\widetilde \Phi_F^{-1})^\ast((\nu_{\Sigma_F}^\ast\mathrm{inc}_F^*\widetilde\varrho)|_{\widetilde V_F})=0,
$$
yielding  $\mathrm{inc}_F^*\widehat\varrho=\mathrm{inc}_F^*\widetilde\varrho$ and consequently 
\begin{equation}\label{hatvarrho}
\widetilde\Phi_{F}^\ast(\widehat\varrho|_{U_F})=(\nu_{\Sigma_F}^\ast\inc_F^*\widehat\varrho)|_{V_F}\,. 
\end{equation}

Next, choose $\delta>0$ in Lemma \ref{lem:globalretraction} such that it applies also with a slightly larger $\widehat \delta>\delta$ and gives us two corresponding retractions $\widehat{\ret}_{0}:\widehat W\to \J^{-1}(\{0\})^{\mathrm{reg}}$ and $\ret_{0}:W\to \J^{-1}(\{0\})^{\mathrm{reg}}$ from open subsets $\widehat W,W\subset M$ onto $J^{-1}(\{0\})^{\mathrm{reg}}$ such that $\widehat{\ret}_{0}|_{W}=\ret_{0}$. Let $\overline{W}$ be the closure of $W$ in $M$. Then the compact set $\overline{W}\setminus \bigcup_{F\in \F^0}U_F$ lies in $M\setminus M^{S^1}$ and its intersection with $J^{-1}(\{0\})^{\mathrm{reg}}$ is compact. By making $\delta$, and hence $W$, smaller while keeping $\widehat \delta$ fixed, we can therefore achieve that $\widehat W$ contains $\overline{W}\setminus \bigcup_{F\in \F^0}U_F$.   Using Lemma \ref{eqHom} again, now with
$N=\J^{-1}(\{0\})^{\mathrm{reg}}$ and $U_N=\widehat W$, 
there is a  form $\sigma\in\Omega_{S^1}^\ast(\widehat W)$ such that
\begin{equation}\label{retractformeq1}
\widehat\varrho|_{\widehat W}=\widehat \ret_{0}^*\inc_{0}^*\widehat\varrho+d_\g\sigma\,.
\end{equation}
On the other hand, for any $F\in \F^0_{\mathrm{indef}}$ one has $\nu_{\Sigma_F}\circ\inc_{Z_F}\circ\ret_{Z_F}=\nu_{\Sigma_F|\Sigma_{F\bullet \bullet}}$ so that  from \eqref{hatvarrho} we deduce that $\widetilde\Phi_{F}^\ast(\widehat\varrho|_{\widehat W \cap U_F})
=\ret_{Z_F}^*\inc_{Z_F}^*\widetilde\Phi_{F}^\ast(\widehat\varrho|_{\widehat W \cap U_F})$  for such an $F$. Thanks to Lemma \ref{lem:globalretraction} we therefore already know that
\begin{equation*}
\widehat\varrho|_{\widehat W\cap U_F}=(\widehat \ret_{0}|_{\widehat W\cap U_F})^*\inc_{0}^*(\widehat\varrho|_{\widehat W \cap U_F})\,.
\end{equation*}
Invoking the second part of the equivariant homotopy Lemma \ref{eqHom} and recalling that $\widehat W$ is disjoint from $\widetilde U_F$ for each $F\in \F^0_{\mathrm{def}}$, we can
thus choose $\sigma$ in Equation \eqref{retractformeq1}
such that $\sigma|_{\widehat W\cap U_F}\equiv 0$ for all $F\in \F^0$. Let $\chi\in\CT(\widehat W)$ be identically equal to $1$ on $\overline W\setminus \bigcup_{F\in \F^0}U_F$. Then we can extend $\sigma$ by $0$ by setting  $\widehat\sigma:=\chi \sigma \in\Omega_{S^1}^\ast(M)$, and 
$\varrho:=\widehat \varrho-d_{\g}\widehat\sigma \in\Omega_{S^1}^\ast(M)$ 
satisfies the properties \eqref{eq:12.5.2021} and \eqref{retractform}. 
Since it differs from $\widehat\varrho\in\Omega_{S^1}^\ast(M)$,  and hence from $\widetilde\varrho\in\Omega_{S^1}^\ast(M)$, by an equivariantly exact form, $[\varrho]=[\widetilde\varrho]\in H^\omega_{S^1}(M)$,
which concludes the proof. 
\end{proof}

Conceptually, this lemma implies that when computing the Witten integral we can make the following assumption, which will lead to substantial simplifications later.

\begin{assumption}\label{ass:rho} The equivariantly closed
form $\varrho \in \Omega_{S^1}^\ast(M)$ in the Witten integral \eqref{eq:mainint} satisfies the conditions \eqref{eq:12.5.2021} and \eqref{retractform}. 
\end{assumption}

In order to obtain a meaningful geometric interpretation for
the asymptotic expansion of the Witten integral, we will also make the following
assumption.

\begin{assumption}\label{ass:phi} The test function
$\phi\in \CT(\R)$ in Definition \ref{def:mainint} is identically
equal to $1$ in a neighborhood of $0$.
\end{assumption}

Next, we choose a $\delta>0$ as in Lemma \ref{assumption} and 
a cutoff function $\tau\in \CT(\R)$
with $\textup{supp }\tau\subset(-\delta,\delta)$ such that $\tau\equiv 1$
on $[-\delta/2,\delta/2]$.
Since $e^{m\omega}$ is a polynomial in $m\in\N$,
the non-stationary phase principle implies that as $m\to+\infty$ we have for any equivariantly closed $\varrho$ 
\begin{equation}
\<\W_m(\varrho),\phi\>=\int_{\R} \left[\int_{M} e^{2\pi imx\J} e^{m \omega}
 \varrho(x)~\tau\circ\J \right]  \phi(x) \d x+\O(m^{-\infty})\,,\label{eq:Wittenminfty}
\end{equation}
so that the integral localizes around the zero level set $\J^{-1}(\{0\})\subset M$. 
Now, since $M$ is compact, we can take $\delta$ small enough such that with the coordinates defined by the diffeomorphisms \eqref{eq:PsiSF}
and \eqref{eq:Psiindef}  introduced in Section \ref{sec:applicationtonormal}, we can define for every  $F\in\F^0$ cutoff functions $\chi_F\in\CT(U_F)$ and $\tau_F\in\CT(V_F)$  by the characterizing relations
\begin{align}\label{chiFdef}
 \nonumber (\tau_F\circ\Psi_{F})(p,r)&=\tau(r^2/2), && (p,r)\in S_F\times(0,\infty), &&\text{for } F\text{ definite},\\
(\tau_F\circ\Psi_{F})(p,r,q)&=\tau(r)\tau(q), && (p,r,q)\in S_F\times\R\times(0,\infty),& &\text{for } F\text{ indefinite},\\
\nonumber \chi_F\circ\Phi_F&=\tau_F.
\end{align}
Note that, as $\tau$ equals $1$ near $0$, the functions
$\chi_F$ and $\tau_F$ are well-defined by \eqref{chiFdef} even though the coordinate $r$ (in the definite case) and the coordinates $r,q$ (in the indefinite case) are only available on the subspaces $\Sigma_{F\bullet}\subset \Sigma_F$ and $\Sigma_{F\bullet\bullet}\subset \Sigma_F$, respectively.

We then decompose the integral in \eqref{eq:Wittenminfty} further as
\begin{align*}
\begin{split}
  \<\W_m(\varrho),\phi\>&=
  \underbrace{\int_{\R} \left[\int_{M}
  e^{2\pi imx\J} e^{m \omega}  \varrho(x)~
   \left(\tau\circ\J-\sum_{F\in \F^0}\chi_F\right)
   \right]  \phi(x) \d x}_{=:\<\W^{\text{reg}}_m(\varrho),\phi\>}\\
   &\qquad+\sum_{F\in \F^0}\underbrace{\int_{\R}
  \left[\int_{M} e^{2\pi imx\J} e^{m \omega} \varrho(x)
  ~\chi_F\right] 
  \phi(x) \d x}_{=:\<\W^{F}_m( \varrho),\phi\>}\;+\;\O(m^{-\infty}).
\end{split}
\end{align*}
We then obtain 
\begin{align}
\label{eq:25.5.2019}
\begin{split}
  \<\W_m(\varrho),\phi\>&= \<\W^{\text{reg}}_m(\varrho),\phi\>+\sum_{F\in \F^0}\<\W^{F}_m(\varrho),\phi\>+ \O(m^{-\infty})\,,
\end{split}
\end{align}
where  by pullback along $\Phi_F$ and thanks to Assumption \ref{ass:rho} we have
\bq
\label{WF}
\<\W^{F}_m(\varrho),\phi\>= \int_{\R} \left[\int_{\Sigma_F}
e^{2\pi im\frac{x}{2}Q_F}  e^{m\omega_{\Sigma_F}} \nu_{\Sigma_F}^*\varrho_{F}(x)\,
 \tau_F \right]  \phi(x) \d x.
\eq

\subsection{Definite fixed point set components}\label{sec:defcase}

Let us begin with the simpler definite case, so that $F\in \F^0_\mathrm{def}$. Then either $F\in \F^0_+$ or $F\in \F^0_-$, depending on whether $F$ is positive or negative definite. Recall also that  we have
$\J^{-1}(\{0\})\cap U_F=F$, which is fixed by the action of $S^1$. 
Pulling back the inner integral in \eqref{WF} along the diffeomorphism
$\Psi_{F}$ introduced in \eqref{eq:PsiSF}, taking into account Diagram \eqref{diagram_retract_EFdefinite} and the fact that $\Sigma_{F\bullet}$ has full measure in $\Sigma_F$, and using \eqref{chiFdef} and \eqref{eq:rQ_Fdefinite} we get
\begin{align*}
\<\W^{F}_m(\varrho),\phi\>&= \int_{\R} \left[\int_{S_F\times(0,\infty)}   \tau(r^2/2) e^{\pm 2\pi im\frac{x}{2} r^2 }
e^{m \Psi_{F}^\ast (\omega_{\Sigma_F}|_{\Sigma_{F\bullet}})}   \nu_{S_F\times(0,\infty)}^\ast  \varrho_{F}(x)  \right]  \phi(x) \d x\quad\text{if }F\in\F_\pm. 
\end{align*}
Corollary \ref{cor:PsiSFastdefinite} implies that
\bq\label{defcase1}
\Psi_{F}^\ast (\omega_{\Sigma_F}|_{\Sigma_{F\bullet}}) =  \nu_{S_F\times(0,\infty)}^\ast \omega_{F} \pm r\d r \wedge\mathrm{pr}_{S_F}^\ast \Theta_{S_F}  \pm \frac{r^2}{2}\, \mathrm{pr}_{S_F}^\ast d\Theta_{S_F}\quad\text{if }F\in\F_\pm\,,
\eq
so that expanding the exponential series and taking into account the fact that the first and last terms 
of the right-hand side of \eqref{defcase1} have no $dr$ part as they are pulled
back from $F$ and $S_F$ respectively, we get
\begin{align}
\begin{split}
 \<\W^{F}_m(\varrho)&,\phi\>=
\sum_{k=0}^{\infty}\frac{(\pm m)^{k+1}}{2^k}\int_{\R}\bigg[
\int_{S_F\times(0,\infty)}  \tau(r^2/2) e^{\pm \pi i m x  r^2}
e^{m\nu_{S_F\times(0,\infty)}^\ast\omega_{F}}\\
 &\qquad \qquad \qquad \qquad \qquad \qquad \qquad   \nu_{S_F\times(0,\infty)}^\ast  \varrho_{F}(x)  \frac{d\Theta_{S_F}^k}{k!}\d r\wedge\Theta_{S_F} r^{2k+1} \bigg]  \phi(x)     \d x\\
&=\sum_{k=0}^{\infty}\left(\frac{\pm 1}{2\pi}\right)^{k+1}\int_{\R}
\int_0^\infty e^{\pm i x s } s^k  \tau(\pm s/(2\pi m)) \left[\int_{S_F}  e^{m \nu_{S_F}^\ast\omega_{F}}   \nu_{S_F}^\ast  \varrho_{F}(x) \frac{d\Theta_{S_F}^k}{k!}  \Theta_{S_F}\right]\phi(x) \d s \d x.\label{W0mdefcomput}
\end{split}
\end{align}
Here  we performed the change of variable $s=m \pi r^2$ in the last line. Note also
that the infinite sum in \eqref{W0mdefcomput} has only finitely many non-zero summands because $d\Theta_{S_F}^k=0$ for $k>\ell_F$. 
 
Recall now that the \emph{Heaviside function} $H:\R\to\R$ is defined by $H(s)=1$
for all $s\geq 0$ and $H(s)=0$ otherwise. Using a standard formula for its Fourier
transform as a tempered distribution \cite[Example \ 7.1.17]{hoermanderI}
and the fact that $\tau\in\CT(\R)$ is identically equal to
$1$ on $(-\delta/2,\delta/2)$, one gets for any $k\in\N$ and any Schwartz function
$\psi\in\Scal(\R)$ as $m\to \infty$ the equality
\begin{align}
\nonumber \int_0^{\infty} \int_{\R}  e^{\pm i sx}
  s^k  \tau(\pm s/(2\pi m))\psi(x)\d x\d s
\nonumber &=\int_0^{\infty} s^k\, \hat\psi(\mp s)\,\d s
+\int_0^{\infty} s^k\,(\tau(\pm s/(2\pi m))-1)
\hat\psi(\mp s)\d s\\
\nonumber &=(\mp 1)^{k}\eklm{q^k H(\mp q), \hat \psi}+\O\bigg (\int_{m\pi\delta}^\infty  s^k|\hat\psi(\mp s)| \d s \bigg )\\
\label{pvfla}&=(\pm i)^{k+1}k! \eklm{{\underline{x}^{-(k+1)}_\pm}, \psi}+\O(m^{-\infty} ),
\end{align}
where we used that $\hat\psi$ is rapidly decreasing and introduced the distribution
$\underline{x}^{-k}_\pm$ defined by
\begin{equation}\label{pvdef}
\underline{x}^{-k}_\pm:=\lim_{\epsilon\to 0^+}(x\pm i\epsilon)^{-k}\,, 
\end{equation}
which satisfies the standard relation
$\frac d {dx} \underline{x}^{-k}_\pm=-k \underline{x}^{-k-1}_\pm$ for all $k\in\N$
as distributions. 
More generally, for any absolutely converging
Laurent series $P(x)=\sum_{j=-N}^{+\infty} a_j\,x^j$,
we will write $P(\underline x_\pm)$ for the distribution defined by
\begin{equation}\label{Laurentpvdef}
   \eklm{P(\underline x_\pm),\psi}:=
   \sum_{j=-N}^{-1} a_j\eklm{\underline x^j_\pm,\psi}
   +\eklm{\sum_{j=0}^{\infty} a_jx^j,\psi}\,.
\end{equation}
We thus get from \eqref{W0mdefcomput} that as $m\to\infty$
\begin{equation}\label{W0mdefcomputfinal}
\begin{split}
\<\W^{F}_m(\varrho),\phi\>&=\bigg\<\int_{S_{F}}\,\nu_{S_F}^\ast\varrho_{F}(x) 
\,e^{m\nu_{S_F}^\ast\omega_{F}}
\sum_{k=0}^{\infty}
\frac{(\frac{i}{2\pi}d\Theta_{S_F})^k}{\underline x^{k+1}_\pm}\frac{i}{2\pi}\Theta_{S_F},\phi\bigg\>
+\O(m^{-\infty})\\
&=\bigg\langle\int_{S_{F}}\,e^{m\nu_{S_F}^\ast\omega_{F}}\nu_{S_F}^\ast
\varrho_{F}(x) 
\frac{\frac{i}{2\pi}\Theta_{S_F}}{\underline{x}_\pm-\frac{i}{2\pi}d\Theta_{S_F}},\phi\bigg\rangle+\O(m^{-\infty})\quad\text{if }F\in\F_\pm\,.
\end{split}
\end{equation}

\begin{rem}\label{rem:DH1}
Adapting the proof of Duistermaat and Heckman in
\cite[(2.11)-(2.31)]{duistermaat-heckman83} 
one gets from the first line of \eqref{W0mdefcomputfinal} that
\begin{equation}\label{W0defrmk0}
\<\W^{F}_m(\varrho),\phi\>=\bigg\<\int_F \frac{e^{ m \omega_F}\varrho_{F}(\underline x_\pm)}{e_F(\underline x_\pm)},\phi\bigg\>
+\O(m^{-\infty})\quad\text{if }F\in\F_\pm\,,
\end{equation}
where $e_F(x)\in\Omega^*_T(F)$ is the \emph{equivariant Euler class}
of the normal bundle $\nu_{\Sigma_F}:\Sigma_F\to F$. 
Note that the integrand coincides with the integrand
appearing in Berline-Vergne localization
formula \cite{berline-vergne82}, and one can actually see that
our method does provide an alternative proof of it.
In the application to the computation
of the Riemann-Roch numbers in Section \ref{sec:asyRR}, 
we will not use Formula \eqref{W0defrmk0} but instead provide a direct proof based on \eqref{W0mdefcomputfinal} and the Kirillov formula
of Theorem \ref{Kirfla}.
\end{rem}

\subsection{Indefinite fixed point set components}  \label{sec:indefinitefixedpoints}

Let us now deal with the indefinite case $F\in \F_\mathrm{indef}$, so that $\ell_F^+>0$ and
$\ell_F^->0$.
Using the diffeomorphism 
$\Psi_{F}$ introduced in \eqref{eq:Psiindef} and the coordinates
\eqref{eq:rscoord}, the
integral \eqref{WF} becomes
\begin{align}
\label{eq:13.5.2021}
\begin{split}
\<\W^{F}_m(\varrho),\phi\>&= \int_{\R} \left[\int_{S_F\times(0,\infty)\times \R}   \tau(r)\tau(q) e^{2\pi i mx q}   e^{m \Psi_{F}^\ast (\omega_{\Sigma_F}|_{\Sigma_{F\bullet\bullet}})}  \nu_{S_F\times(0,\infty)\times \R}^\ast  \varrho_{F}(x)  \right]  \phi(x) \d x. 
\end{split}
\end{align}
Now, by Corollary \ref{cor:PamplonaF} we know that
\bq
\Psi_{F}^\ast (\omega_{\Sigma_F}|_{\Sigma_{F\bullet\bullet}})=  \pi_{Z_F}^\ast \mathrm{inc}_{Z_F}^\ast\omega_{\Sigma_F}+ d\big (q\, \mathrm{pr}_{S_F}^\ast  \Theta_{S_F}\big )+ d_\g \bar\beta_F\label{eq:01.07.2022}
\eq
where
\bqn
\bar\beta_F:=( \sqrt{r^4+q^2} -r^2) \mathrm{pr}_{S_F}^\ast \overline \Theta_{S_F} \in\Omega^1(S_F\times(0,\infty)\times \R)_{\textup{bas}}^{S^1}\label{eq:betabar}
\eqn
is a basic form in the sense of \eqref{basdef}, so that in particular  $d \bar\beta_F =d_\g\bar\beta_F$ by Definition \ref{OmG}. 
Since 
\bq
d_\g\bar\beta_F=d \bar\beta_F= ( \sqrt{r^4+q^2} -r^2) \mathrm{pr}_{S_F}^\ast d\overline \Theta_{S_F} + \Big( \frac{2r^3 \d r + q\d q}{\sqrt{r^4+q^2}} -2r\d r\Big)\wedge \mathrm{pr}_{S_F}^\ast \overline \Theta_{S_F}, \label{eq:dbarbeta}
\eq
both $\bar \beta_F$ and  $d_\g\bar\beta_F$ have bounded extensions to the manifold with boundary $S_F \times [0,\infty) \times \R$. On the other hand, the summand $d\big (q\, \mathrm{pr}_{S_F}^\ast  \Theta_{S_F}\big )$ is constant with respect to $r$, and the fact that the summand $\pi_{Z_F}^\ast \mathrm{inc}_{Z_F}^\ast\omega_{\Sigma_F}$ is bounded as $r\to 0^+$ follows from \eqref{eq:09.08.2022} in view of \eqref{sympSigmaF} and \eqref{eq:relomegastdomegavert}. 
 Inserting \eqref{eq:01.07.2022}  into \eqref{eq:13.5.2021} therefore yields 
\bq
\label{I1I2I3I4}
\<\W^{F}_m(\varrho),\phi\>=I_1^F(m) +I_2^F(m),
\eq
where the integrals
\begin{equation*}
\begin{split}
I_1^F(m):=& \int_{\R} \left[\int_{S_F\times(0,\infty)\times \R}   \tau(r)\tau(q) e^{2\pi i  m x q  }   e^{m\pi_{Z_F}^\ast \mathrm{inc}_{Z_F}^\ast\omega_{\Sigma_F} +md (q\, \mathrm{pr}_{S_F}^\ast  \Theta_{S_F}) } \nu_{S_F\times(0,\infty)\times \R}^\ast  \varrho_{F}(x) \right]  \phi(x) \d x,\\
   I_2^F(m):=& \int_{\R} \bigg[\int_{S_F\times(0,\infty)\times \R}   \tau(r)\tau(q) e^{2\pi i  m x q } e^{m\pi_{Z_F}^\ast \mathrm{inc}_{Z_F}^\ast\omega_{\Sigma_F} +md (q\, \mathrm{pr}_{S_F}^\ast  \Theta_{S_F} ) } \nu_{S_F\times(0,\infty)\times \R}^\ast   \varrho_{F}(x)\\
   &\qquad\qquad\qquad\qquad \Big ( e^{m d_\g\bar\beta_F } -1\Big )  \bigg]  \phi(x) \d x 
\end{split}
\end{equation*}
are absolutely convergent.

\subsubsection{The integral $I_1^F(m)$}
\label{sec:I1}
Let us first turn to the integral $I_1^F(m)$.
Here we closely follow the method of \cite[Proof of Theorem\ 3.3]{meinrenken96}. Isolating the $dq$ part in
\eqref{eq:01.07.2022} and expanding the corresponding exponential series we get
\begin{align*}
\nonumber &I_1^F(m)=\sum_{k=0}^{\infty}\frac{m^{k+1}}{k!}\int_{\R}\bigg[\int_{S_F\times (0,\infty)\times\R} {q^k} e^{2\pi i  mx q }    \tau(r)\tau(q)   \nu_{S_F\times(0,\infty)\times \R}^\ast  \varrho_{F}(x)\\
\nonumber &\qquad\qquad\qquad\qquad\qquad\qquad\qquad e^{m \pi_{Z_F}^\ast \mathrm{inc}_{Z_F}^\ast\omega_{\Sigma_F}}\d q\,d\Theta_{S_F }^k \Theta_{S_F } \bigg]  \phi(x) \d x\\
&=\sum_{k=0}^{\infty}\frac{m^{k+1}}{k!}\int_{S_F\times (0,\infty)}
\bigg[  \underbrace{\int_{\R^2} q^k e^{2\pi i  mx q }     \nu_{S_F\times(0,\infty)}^\ast  \varrho_{F}(x)  \tau(q)\phi(x)  \d q\d x}_{=:\tilde I(m)}\bigg]
\tau(r)e^{m \Psi_{F}^\ast \mathrm{inc}_{Z_F}^\ast\omega_{\Sigma_F}}d\Theta_{S_F }^k
\Theta_{S_F}.\label{I1Mcomput0}
\end{align*}
Here we took Diagram \eqref{eq:diagram1} into account. 
Note that, as before, the infinite sums occurring here are actually finite because $d\Theta_{S_F }^k=0$ for $k\in\N$ large enough. Moreover, the integrand of $\tilde I(m)$ depends on the external parameters $(p,r)\in S_F\times (0,\infty)$ only via the point $\nu_{S_F\times(0,\infty)}(p,r)=\nu_{S_F}(p)\in F$, so that
$\tilde I(m)$ is constant with respect to $r\in (0,\infty)$.

Applying the classical stationary phase principle \cite[Lemma 7.7.3]{hoermanderI} to $\tilde I(m)$ while taking into account Assumption \ref{ass:phi}, we get 
 \bqn
\tilde I(m)  =  \frac{i^k}{(2\pi)^k m^{k+1}} \frac{\partial^{k}\nu_{S_F\times(0,\infty)}^\ast  \varrho_{F}}{\partial x^k}(0) + \O(m^{-\infty}), \qquad m\to+\infty,
\eqn
the estimate being uniform on  $S_F\times (0,\infty)$ because $S_F$ is compact and $\tilde I(m)$ is constant with respect to $r\in (0,\infty)$. This yields
\begin{equation}\label{eq:prelimexpindefI}
\begin{split}
I_1^F(m)&=\int_{S_F\times (0,\infty)}   \tau(r)   \,  e^{m \Psi_{F}^\ast \mathrm{inc}_{Z_F}^\ast\omega_{\Sigma_F}}  
\nu_{S_F\times(0,\infty)}^\ast  \varrho_{F}\Big(\frac{i}{2\pi} d \Theta_{S_F }\Big)
\, \Theta_{S_F } + \O(m^{-\infty})\\
&= \int_{Z_F}   \tau(r)   \,  e^{m \mathrm{inc}_{Z_F}^\ast  \omega_{\Sigma_F}  }  \nu_{Z_F}^\ast  \varrho_{F}\Big(\frac{i}{2\pi}d\Theta_{Z_F}\Big)\,\Theta_{Z_F} + \O(m^{-\infty})\\
&=\int_{\J^{-1}(\{0\})^\mathrm{reg}}\chi_F \,e^{m \, \textup{inc}_{\J^{-1}(\{0\})^\mathrm{reg}}^*\omega}
\textup{inc}_{\J^{-1}(\{0\})^\mathrm{reg}}^*\varrho\Big(\frac{i}{2\pi} d \Theta\Big)\,\Theta + \O(m^{-\infty}).
\end{split}
\end{equation}
Here, in the second line, we consider $r$ as a coordinate on $Z_F$ using the diffeomorphism $\Psi_{F}$ from \eqref{eq:PsiZF}. The third line is obtained by pullback along the diffeomorphism $\Phi_F^{-1}$, taking into account Assumption \ref{ass:rho}, \eqref{eq:diagram_withPhiF}, the last line in Equation \eqref{chiFdef}, and the fact that $\chi_F$ is supported in $U_F$.

\subsubsection{The integral $I_2^F(m)$} Let us now turn to the integral
$ I_2^F(m)$. First note from \eqref{eq:01.07.2022} that in the coordinates
\eqref{eq:rscoord} defined by the diffeomorphism \eqref{eq:Psiindef},
the equivariant form
\begin{equation}\label{eqclosed}
e^{2\pi im xq}e^{\pi_{Z_F}^\ast \mathrm{inc}_{Z_F}^\ast\omega_{\Sigma_F}+ d(q\, \mathrm{pr}_{S_F}^\ast  \Theta_{S_F})}=\Psi_F^*(e^{2\pi im\frac{x}{2}Q_F} e^{m\omega_{\Sigma_F}})e^{-d_\g\bar{\beta}_F}
\in\Omega^*_{S^1}(S_F\times(0,\infty)\times \R)
\end{equation}
is equivariantly closed.
Note also that
\bq\label{emdb-1}
e^{md_\g\bar\beta_F  } -1=\sum_{k=1}^{n} \frac 1 { k!} (m  d_\g \bar\beta_F  )^k =  d_\g \Big(\underbrace{m\sum_{k=1}^{n} \frac {m^k} { k!}  \bar\beta_F  (d_\g \bar\beta_F  )^{k-1}}_{=:\bar\beta_{m,F}}\Big)  
\eq
is an equivariantly exact form. By \eqref{eq:dbarbeta} one concludes that
\bq
\label{eq:07.07.2022}
\bar\beta_{m,F}= m\sum_{k=1}^n \frac {m^k}{k!}  (\sqrt{r^4+q^2}-r^2  )^k ( \mathrm{pr}_{S_F}^\ast  d \overline \Theta_{S_F } )^{k-1}  \mathrm{pr}_{S_F}^\ast   \overline \Theta_{S_F }
\eq
has a continuous extension to the manifold with boundary
$S_F\times [0,\infty)\times \R$. Consider further for $\epsilon\geq 0$ the manifold with boundary $S_F \times [\epsilon,\infty)\times \R$, together with  the canonical parametrization of its boundary
given by the orientation preserving map
\begin{equation}\label{bFeps}
\begin{split}
b_{F,\epsilon}:S_F\times \R &\stackrel{\cong}{\longrightarrow} S_F \times \{\epsilon\}\times \R=\partial(S_F \times [\epsilon,\infty)\times \R),\\
(p,q) &\longmapsto (p,\epsilon,q)\,.
\end{split}
\end{equation}
Using Lebesgue's convergence theorem, Lemma \ref{eqStokes},  and the fact that the form \eqref{eqclosed}
is equivariantly closed we  get as $m\to+\infty$
\begin{multline*}
I_2^F(m)= \lim_{\epsilon \to 0} \int_{\R} \bigg[\int_{S_F \times [\epsilon,\infty)\times \R}    \tau(r)\tau(q) e^{2\pi i  m x q } e^{m\pi_{Z_F}^\ast \mathrm{inc}_{Z_F}^\ast\omega_{\Sigma_F} +md (q\, \mathrm{pr}_{S_F}^\ast  \Theta_{S_F} ) } \\  \hspace{3cm} \d_\g \bar\beta_{m,F}\,  \nu_{S_F\times(0,\infty)\times \R}^\ast  \varrho_{F}(x) \bigg]  \phi(x) \d x \\
 =\lim_{\epsilon \to 0} \int_{\R} \bigg[\int_{S_F \times [\epsilon,\infty)\times \R}   d_\g\bigg(\tau(r)
 \tau(q) e^{2\pi i  m x q } e^{m\pi_{Z_F}^\ast \mathrm{inc}_{Z_F}^\ast\omega_{\Sigma_F} +md (q\, \mathrm{pr}_{S_F}^\ast  \Theta_{S_F} ) } \\ \bar\beta_{m,F}\,  \nu_{S_F\times(0,\infty)\times \R}^\ast  \varrho_{F}(x)\bigg) \bigg]  \phi(x) \d x-R_2^F(m)\\
  =\lim_{\epsilon \to 0} \int_{\R} \bigg[\int_{S_F \times \R}  b_{F,\epsilon}^\ast \Big (  \tau(r)\tau(q) e^{2\pi i  m x q } e^{m\pi_{Z_F}^\ast \mathrm{inc}_{Z_F}^\ast\omega_{\Sigma_F} +md (q\, \mathrm{pr}_{S_F}^\ast  \Theta_{S_F} ) }
  \bar\beta_{m,F} \,
  \nu_{S_F\times\{\eps\}\times \R}^\ast  \varrho_{F}(x)\Big )  \bigg]  \phi(x) \d x\\
  -R_2^F(m),
 \end{multline*} where we wrote 
 \begin{multline}\label{R2F1}
R_2^F(m):=\int_\R\int_{S_F \times (0,\infty)\times \R}d(\tau(r)\tau(q)) e^{2\pi i  m x q } e^{m\pi_{Z_F}^\ast \mathrm{inc}_{Z_F}^\ast\omega_{\Sigma_F} +md (q\, \mathrm{pr}_{S_F}^\ast  \Theta_{S_F} ) }\\  \bar\beta_{m,F}\,  \nu_{S_F\times(0,\infty)\times \R}^\ast  \varrho_{F}(x) \phi(x) \d x. 
 \end{multline}
Now, notice that  \eqref{omstdsigmaFdef}, \eqref{eq:31.08.22}, and \eqref{fkdef}  imply that $\omega_{\mathrm{vert}}$ is homogeneous of degree $2$ with respect to scalar multiplication in the fiber, so that  according to the definition \eqref{sympSigmaF} of $\omega_{\Sigma_F}$ one has 
\bq
b_{F,\epsilon}^*\pi_{Z_F}^\ast \mathrm{inc}_{Z_F}^\ast\omega_{\Sigma_F}=\eps^2\mathrm{pr}_{S_F}^\ast \mathrm{inc}_{S_F}^\ast\omega_{\mathrm{vert}}+\mathrm{pr}_{S_F}^\ast \nu_{S_F}^*\,\omega_F\label{eq:3829380690326}.
\eq
Taking into account  \eqref{eq:07.07.2022} and expanding the exponential series we therefore see that with 
\begin{multline*}
\tilde I_2^F(m):=  \int_{\R^2} e^{2\pi i  m x q}\sum_{k=1}^n \frac { |q|^km^{k+1}}{ k!}  \bigg[\int_{S_F}   e^{m \nu_{S_F}^\ast \omega_{F}  +m q d \Theta_{S_F } }    (  d \overline \Theta_{S_F } )^{k-1}\,\nu_{S_F}^\ast\varrho_{F}(x)\,
\Theta_{S_F }\,\overline \Theta_{S_F }  \bigg ] 
\tau(q)\phi(x) \d x \d q
\end{multline*}
we obtain 
 \begin{equation}\label{bigcomputI2}
I_2^F(m) =\tilde I_2^F(m)-R_2^F(m).
\end{equation}
Separating the sum over $k$ into even and odd indices
and using the \emph{sign function} $\mathrm{sgn}:\R\to\R$ defined for all
$q\in\R$ by $\mathrm{sgn}(q):=H(q)-H(-q)$, we rewrite $\tilde I_2^F(m)$ as
\begin{align}
\nonumber \tilde I_2^F(m)&=m \int_{\R^2}
  e^{2\pi i  m x q } \tau(q)\int_{S_F}
  e^{m \nu_{S_F}^\ast  \omega_{F}}\,\nu_{S_F}^\ast \varrho_{F}(x )\, \phi(x)
  \,\Theta_{S_F }\,\overline \Theta_{S_F } \\
\nonumber   &\qquad\qquad e^{m q\, d\Theta_{S_F }}\bigg[
  \mathrm{sgn}(q)\sum_{\substack{k=1\\ k\text{ odd}}}^n\frac { q^{k}m^k}{ k!}
  d \overline \Theta_{S_F }^{k-1}+
  \sum_{\substack{k=1\\ k\text{ even}}}^n \frac {q^km^k }{k!}d \overline \Theta_{S_F }^{k-1}\bigg]
   \,\d q\,{\d x}\\
 \nonumber  &=m \int_{\R^2}  e^{2\pi i  m x q } \tau(q)\int_{S_F}
  e^{m \nu_{S_F}^\ast  \omega_{F}}\,\nu_{S_F}^\ast \varrho_{F}(x )\, \phi(x)
  \,\Theta_{S_F }\,\overline \Theta_{S_F }\\
  &\qquad\bigg[\mathrm{sgn}(q)\sum_{\substack{j,k=0\\ k\text{ odd}}}^n\frac { q^{j+k}m^{j+k}}{j!\,k!}d \Theta_{S_F }^j d \overline \Theta_{S_F }^{k-1}+\frac{e^{mq\,d\Theta_{S_F }^+}+e^{mq\,d\Theta_{S_F }^
  -}-2\,e^{mq\,d\Theta_{S_F }}}{2\,d\overline \Theta_{S_F }}\bigg]\,\d q{\d x}\,,\label{I2epsexp}
\end{align}
where we used the fact that, by the definition  of $\Theta_{S_F }$ and $\overline \Theta_{S_F }$ in \eqref{eq:connS_F}, one has
\begin{equation*}
\begin{split}
e^{m q\, d\Theta_{S_F }}\sum_{\substack{k=1\\ k\text{ even}}}^n \frac {q^km^k }{k!}d \overline \Theta_{S_F }^{k-1}
  &=\frac{e^{m q\,d \Theta_{S_F }}}{2}
  \bigg(\frac{e^{mq\,d \overline \Theta_{S_F }}-1}
  {d \overline \Theta_{S_F }}+\frac{e^{-mq\,d \overline \Theta_{S_F }}-1}
  {d \overline \Theta_{S_F }}\bigg)\\
  &=\frac{e^{mq\,d\Theta_{S_F }^+}+e^{mq\,d\Theta_{S_F }^
  -}-2\,e^{mq\,d\Theta_{S_F }}}{2\,d\overline \Theta_{S_F }}\,.
\end{split}
\end{equation*}
Expanding the exponential series and applying the classical stationary phase principle \cite[Lemma 7.7.3]{hoermanderI}   to the second term in the square bracket  of \eqref{I2epsexp} yields the asymptotic expansion
\begin{equation*}\label{evencomput}
\begin{split}
& m \int_{\R^2} e^{2\pi i  mxq}  \tau(q) \nu_{S_F}^\ast \varrho_{F}(x )\, \phi(x)
  \frac{e^{mq\,d\Theta_{S_F }^+}+e^{mq\,d\Theta_{S_F }^
  -}-2\,e^{mq\,d\Theta_{S_F }}}{2\,d\overline \Theta_{S_F }} \,\d q{\d x}\\
  &=\frac{\nu_{S_F}^\ast\varrho_{F}(\frac{i}{2\pi}d \Theta_{S_F }^+)
    +\nu_{S_F}^\ast\varrho_{F}(\frac{i}{2\pi}d \Theta_{S_F }^-)-2\,\nu_{S_F}^\ast\varrho_{F}(\frac{i}{2\pi}d \Theta_{S_F })}{2\,d\overline \Theta_{S_F}}
    +\O(m^{-\infty}),
\end{split}
\end{equation*}
where the remainder estimate is uniform in the base point in $S_F$ because the latter is compact. 
To deal with  the first term in the square bracket  of \eqref{I2epsexp} we note that  with the substitution $s=2\pi mq$ and  \eqref{pvfla} one computes
\begin{multline*}
\int_{\R^2}  e^{2\pi i  m x q } \tau(q) \,\nu_{S_F}^\ast \varrho_{F}(x )\, \phi(x) \mathrm{sgn}(q) \,  q^{j+k}  \,\d q{\d x}\\ 
=2\Big(\frac{i}{2\pi m}\Big)^{j+k+1}(j+k)! \eklm{{\underline{x}^{-(j+k+1)}},\nu_{S_F}^\ast \varrho_{F}(x )\, \phi(x)}+\O(m^{-\infty} ),
\end{multline*}
where we introduced the distributions
\begin{equation}\label{pvdef2}
\underline{x}^{-k}:=\frac 12 \big (\underline{x}_+^{-k}+ \underline{x}_-^{-k}\big ),\qquad k\in \N_0.
\end{equation}
 We will also use the notation \eqref{Laurentpvdef} with
the distributions \eqref{pvdef2}. Expanding $e^{m \nu_{S_F}^\ast  \omega_{F}}$, separating powers of $m$, and recalling Assumption \ref{ass:phi} we get
from \eqref{I2epsexp} the asymptotic expansion
\begin{equation}
\begin{split}
&\tilde I_2^F(m)= \int_{S_F}   e^{m \nu_{S_F}^\ast  \omega_{F}}
\frac{\nu_{S_F}^\ast\varrho_{F}(\frac{i}{2\pi}d \Theta_{S_F }^+)
    +\nu_{S_F}^\ast\varrho_{F}(\frac{i}{2\pi}d \Theta_{S_F }^-)-2\,\nu_{S_F}^\ast\varrho_{F}(\frac{i}{2\pi}d \Theta_{S_F })}{2\,d\overline \Theta_{S_F}}
\Theta_{S_F }\,\overline \Theta_{S_F } \\
  &+2    \sum_{\substack{j,k=0\\ k\text{ odd}}}^n\frac{(\frac{i}{2\pi})^{j+k+1}(j+k)!}{j!\,k!} 
  \eklm{ \underline x^{-(j+k+1)}, 
  \int_{S_F}   e^{ m \nu_{S_F}^\ast  \omega_{F}}\nu_{S_F}^\ast \varrho_{F}(x)d \Theta_{S_F }^j (d \overline \Theta_{S_F })^{k-1}\,
   \Theta_{S_F }\,\overline \Theta_{S_F }  \phi(x)}\label{eq:2385302689206}
\end{split}
\end{equation}
up to terms of order $\O(m^{-\infty})$ as $m \to \infty$. Let us now simplify \eqref{eq:2385302689206} with the help of the binomial theorem by writing for $x\neq 0$
\begin{align*}
   2\sum_{\substack{j,k=0\\ k\text{ odd}}}^n\frac{(j+k)!}{j!\,k!}
 & \frac{(\frac{i}{2\pi})^{j+k+1}}{x^{j+k+1}} d \Theta_{S_F }^j (d \overline \Theta_{S_F })^{k-1}
   \Theta_{S_F }\,\overline \Theta_{S_F } \\ &=\sum_{j,k=0}^n\frac{(j+k)!}{j!\,k!}
   \frac{d \Theta_{S_F }^j
   (d \overline \Theta_{S_F })^{k}-d \Theta_{S_F }^j
   (-d \overline \Theta_{S_F })^{k}}{(-2\pi ix)^{j+k+1}\,d \overline \Theta_{S_F }}
  \Theta_{S_F }\,\overline \Theta_{S_F } \\
     &=\sum_{r=0}^n\frac{(d \Theta_{S_F }+d \overline \Theta_{S_F })^{r}-
    (d \Theta_{S_F }-d \overline \Theta_{S_F })^{r}}{(-2\pi ix)^{r+1}\,d \overline \Theta_{S_F }}
    \Theta_{S_F }\,\overline \Theta_{S_F } \\
  &=\frac{\Theta_{S_F }\,\overline \Theta_{S_F } }{d \overline \Theta_{S_F }}\bigg(\frac{1}{-2\pi ix-(d \Theta_{S_F }+d \overline \Theta_{S_F })}-\frac{1}{-2\pi ix-(d \Theta_{S_F }-d \overline \Theta_{S_F })}\bigg)\\
   &=\frac{\frac{i}{2\pi}\Theta_{S_F }^+}{x-\frac{i}{2\pi}d\Theta_{S_F }^+}\frac{\frac{i}{2\pi}\Theta_{S_F }^-}{x-\frac{i}{2\pi}d\Theta_{S_F }^-},
\end{align*}
where in the last step we used the definitions  of $\Theta_{S_F }$ and $\overline \Theta_{S_F }$ given in  \eqref{eq:connS_F}. Using the exceptional Kirwan map introduced in \eqref{excKirdef} we finally arrive at
\begin{multline}\label{I2epsfinalpre}
\tilde I_2^F(m)=\int_{S_F}   e^{ m\nu_{S_F}^\ast  \omega_{F}}
\kappa_F(\varrho_{F})+\eklm{ \int_{S_F}e^{ m \nu_{S_F}^\ast \omega_{F}}\nu_{S_F}^\ast\varrho_{F}(\underline x)
\frac{\frac{i}{2\pi}\Theta_{S_F }^+}{\underline x-\frac{i}{2\pi}d\Theta_{S_F }^+}
\frac{\frac{i}{2\pi}\Theta_{S_F }^-}{\underline x-\frac{i}{2\pi}d\Theta_{S_F }^-}
,  \phi}\\
+\O(m^{-\infty}).
\end{multline}
This deals with the first term  of the right-hand side of \eqref{bigcomputI2}.
Using equations \eqref{chiFdef} and \eqref{eqclosed}
the second term $R_2^F(m)$ given in \eqref{R2F1} is seen to be equal to
\begin{equation}\label{R2Fmfinal}
R_2^F(m)=\int_\R\int_{M}(d\chi_F)\,
e^{2\pi i m x\J}e^{m\omega}e^{-d_\g\tilde\beta_F}
\tilde\beta_{m,F}\,\varrho(x)\phi(x)dx\,,
\end{equation}
where the basic forms
$\tilde\beta_F,\,\tilde\beta_{m,F}\in\Omega^*(U_F\backslash F)^{S^1}_{\textup{bas}}$
are characterized
by $\Phi_F^*\tilde\beta_F:={(\Psi_F^{-1})}^*\bar \beta_F$ and
$\Phi_F^*\tilde\beta_{m,F}:={(\Psi_F^{-1})}^*\bar \beta_{m,F}$
with $\Phi_F$ restricted to  $V_F\cap \Sigma_{F\bullet\bullet}$; 
they can be extended smoothly from
$\Phi_F(V_F\cap \Sigma_{F\bullet\bullet})$ to $U_F\backslash F$
thanks to Formulas \eqref{eq:betabar} and \eqref{eq:07.07.2022}. 
Collecting everything  we finally get
\begin{multline}\label{I2epsfinal}
I_2^F(m)=\int_{S_F}   e^{ m\nu_{S_F}^\ast  \omega_{F}}
\kappa_F(\varrho_{F})+\eklm{ \int_{S_F}e^{ m \nu_{S_F}^\ast \omega_{F}}\nu_{S_F}^\ast\varrho_{F}(\underline x)
\frac{\frac{i}{2\pi}\Theta_{S_F }^+}{\underline x-\frac{i}{2\pi}d\Theta_{S_F }^+}
\frac{\frac{i}{2\pi}\Theta_{S_F }^-}{\underline x-\frac{i}{2\pi}d\Theta_{S_F }^-}
,  \phi}\\
\qquad\qquad\qquad-R_2^F(m)+\O(m^{-\infty})\,.
\end{multline}

 \subsection{Full asymptotic expansion of the Witten integral}
 \label{fullWitsec}
 
 Let us now combine the computations of the whole section to
 establish the full asymptotic expansion of the Witten integral \eqref{eq:mainint}.  
 
 \begin{thm}\label{thm:wittenasymp} Let $\phi$ be identically equal
 to $1$ in a neighborhood of $0$. Then,
for any equivariantly closed form $\varrho \in \Omega_{S^1}^\ast(M)$ 
one has the  asymptotic expansion
 \begin{multline}\label{Wfinal}
 \<\W_m(\varrho),\phi\>=\int_{\M_0^{\mathrm{reg}}}e^{m\omega_0}
 \kappa(\varrho)+\sum_{F\in \mathcal{F}^0_\mathrm{def},  F\in \mathcal{F}^0_\pm}\bigg\langle\int_{S_{F}}\,e^{m\nu_{S_F}^\ast\omega_{F}}\nu_{S_F}^\ast
\varrho_{F}(\underline{x}_\pm)
\frac{\frac{i}{2\pi}\Theta_{S_F}}{\underline{x}_\pm-\frac{i}{2\pi}d\Theta_{S_F}},\phi\bigg\rangle\\
  +\sum_{F\in \mathcal{F}^0_\mathrm{indef}} \int_{S_F} e^{ m \nu_{S_F}^\ast \omega_{F}}
\kappa_F(\varrho_{F})
\;+\;\eklm{ \int_{S_F}e^{ m \nu_{S_F}^\ast\omega_{F}}\nu_{S_F}^\ast\varrho_{F}(\underline x)
\frac{\frac{i}{2\pi}\Theta_{S_F }^+}{\underline x-\frac{i}{2\pi}d\Theta_{S_F }^+}
\frac{\frac{i}{2\pi}\Theta_{S_F }^-}{\underline x-\frac{i}{2\pi}d\Theta_{S_F }^-}
,  \phi}\;+\;\O(m^{-\infty})
\end{multline} 
 as $m\to+\infty$. Furthermore, each summand in \eqref{Wfinal} only depends on the
equivariant cohomology class $[\varrho] \in H^\omega_{S^1}(M)$.
 \end{thm}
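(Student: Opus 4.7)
The proof consists in assembling the computations of Sections \ref{sec:defcase} and \ref{sec:indefinitefixedpoints} together with a parallel treatment of the regular piece $\<\W^{\mathrm{reg}}_m(\varrho),\phi\>$. By Lemma \ref{assumption} we may assume that $\varrho$ satisfies Assumption \ref{ass:rho}, so that the decomposition \eqref{eq:25.5.2019} applies. The definite local contributions are precisely \eqref{W0mdefcomputfinal}, which matches the second summand of \eqref{Wfinal}. For each $F\in\F^0_\mathrm{indef}$, substituting \eqref{I1I2I3I4} together with \eqref{eq:prelimexpindefI} and \eqref{I2epsfinal} directly yields the exceptional Kirwan term and the pole term of \eqref{Wfinal}, but leaves two leftover pieces: the $\chi_F$-weighted contribution $\int_{\J^{-1}(\{0\})^\mathrm{reg}}\chi_F\cdot e^{m\,\inc_0^\ast\omega}\,\inc_0^\ast\varrho\bigl(\tfrac{i}{2\pi}d\Xi\bigr)\wedge\Xi$ coming from $I_1^F(m)$, and the remainder $-R_2^F(m)$ coming from $I_2^F(m)$.

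The task is then to process $\<\W^{\mathrm{reg}}_m(\varrho),\phi\>$ and combine it with these leftovers to recover the first summand of \eqref{Wfinal}. Using Assumption \ref{ass:rho}, on $W$ we have $\varrho=\ret_0^\ast\inc_0^\ast\varrho$, so that in the coordinates provided by $\Phi_W:W\xrightarrow{\sim}\J^{-1}(\{0\})^\mathrm{reg}\times(-\delta,\delta)$ from Lemma \ref{lem:globalretraction}, the integrand is a pullback from $\J^{-1}(\{0\})^\mathrm{reg}$ times the oscillatory factor $e^{2\pi imxs}$ in the normal variable $s=\J$. Fubini together with the classical stationary phase lemma in $(x,s)$, using $\phi\equiv 1$ near $0$ by Assumption \ref{ass:phi}, localizes to $\{s=0\}$ and, modulo $\O(m^{-\infty})$, produces the regular Kirwan integrand weighted by $(\tau\circ\J-\sum_F\chi_F)|_{\J^{-1}(\{0\})^\mathrm{reg}}=1-\sum_{F\in\F^0_\mathrm{indef}}\chi_F$; here we use that $U_F\cap W=\emptyset$ for $F\in\F^0_\mathrm{def}$, so the definite cut-offs vanish on $\J^{-1}(\{0\})^\mathrm{reg}$. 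Adding the $\chi_F$-weighted leftovers from the $I_1^F(m)$ restores the constant weight $1$, and Lemma \ref{intXi=int} identifies the result with $\int_{\M_0^\mathrm{reg}}e^{m\omega_0}\kappa(\varrho)$. The main technical obstacle I expect is showing that $\sum_F R_2^F(m)=\O(m^{-\infty})$: by the explicit formula \eqref{eq:07.07.2022}, the $k$-th summand of $\bar\beta_{m,F}$ contains the factor $(\sqrt{r^4+q^2}-r^2)^k$, which vanishes to order $2k$ in $q$ at $q=0$; iterated integration by parts in $x$ against $e^{2\pi imxq}$ trades factors of $q$ for factors of $m^{-1}$, and combined with the degenerate stationary phase in $(x,q)$ at the origin this must be shown to defeat the $m^{k+1}$ prefactor.

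For the last assertion, the total Witten integral is cohomological by Lemma \ref{assumption}. The definite summands only depend on $\inc_F^\ast[\varrho]$ by the Berline-Vergne reformulation of Remark \ref{rem:DH1}. The exceptional Kirwan terms are cohomological because $\kappa_F:H^\omega_{S^1}(M)\to H(\mathfrak{S}_F)$ is well-defined on cohomology, as established after \eqref{excKirdef}. The regular Kirwan term is cohomological by Lemma \ref{Kirwanexact}, applied after using the equivariant homotopy Lemma \ref{eqHom} to represent any exact class as $d_\g\sigma$ with $\inc_F^\ast\sigma\equiv 0$ for every $F\in\F^0_\mathrm{indef}$. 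Cohomological invariance of each individual pole term then follows from a direct Stokes calculation on the compact bundle $S_F$, using that $\inc_F^\ast(d_\g\sigma)=d\inc_F^\ast\sigma$ on $F$ (the fundamental vector field vanishes there) combined with the basicity properties from Section \ref{sec:applicationtonormal} and the explicit differentials $d(\Xi_{S_F}^\pm/(\underline{x}-\tfrac{i}{2\pi}d\Xi_{S_F}^\pm))$.
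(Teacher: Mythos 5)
Your skeleton (reduce to Assumption \ref{ass:rho} via Lemma \ref{assumption}, decompose as in \eqref{eq:25.5.2019}, insert \eqref{W0mdefcomputfinal}, \eqref{eq:prelimexpindefI} and \eqref{I2epsfinal}, then process the regular piece) is the paper's, but the two places you treat differently are exactly where your argument breaks. First, on $W$ the integrand of $\<\W^{\textup{reg}}_m(\varrho),\phi\>$ is \emph{not} a pullback times an oscillatory factor: Assumption \ref{ass:rho} controls $\varrho|_W$ but not $e^{m\omega}|_W$, and $\omega|_W\neq \ret_{0}^*\inc_0^*\omega$. One needs the decomposition $\omega|_W=\ret_{0}^*\inc_0^*\omega+d(q\,\ret_{0}^*\Xi)+d_\g\tilde\beta$ of \eqref{eq:23589}, with $\Xi$ and $\tilde\beta$ chosen compatibly with the local normal forms near each indefinite $F$ (this is what \eqref{retVF} in Lemma \ref{lem:globalretraction} is for): the $d(q\,\ret_{0}^*\Xi)$ part is what generates, after expansion and stationary phase, the substitution $x\mapsto\frac{i}{2\pi}d\Xi$ and the factor $\wedge\,\Xi$ in the Kirwan integrand, while $e^{m\,d_\g\tilde\beta}-1$ must be handled by the equivariant Stokes lemma and produces boundary terms supported where $d\chi_F\neq 0$. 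Second, and decisively, your plan to show $\sum_F R_2^F(m)=\O(m^{-\infty})$ cannot succeed. On the support of $\tau'(r)\tau(q)$ the phase $2\pi xq$ has its stationary point at $(x,q)=(0,0)$, and by \eqref{eq:07.07.2022} the $k$-th summand of $\bar\beta_{m,F}$ behaves like $m^{k+1}q^{2k}(2r^2)^{-k}(1+\O(q^2))$ there; stationary phase then yields contributions of relative order $m^{1-k}$, multiplied by the polynomial factors coming from $e^{m\pi_{Z_F}^\ast \mathrm{inc}_{Z_F}^\ast\omega_{\Sigma_F}}$, so $R_2^F(m)$ is in general polynomially large in $m$, not rapidly decreasing. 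In the paper it is disposed of by an \emph{exact cancellation}: rewritten as \eqref{R2Fmfinal}, it coincides (with opposite sign) with the $d\chi_F$-boundary term \eqref{Wtopasy} produced by the Stokes step in the regular region, and this cancellation is precisely why the compatible choice of $\Xi$ and $\tilde\beta$ near each indefinite $F$ is needed. Your naive "Fubini plus stationary phase in $(x,s)$" treatment of the regular piece never produces these boundary terms, so the leftover $R_2^F(m)$ has nothing to cancel against and your expansion does not close.

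There is also a gap in your argument for the invariance of the first summand. You propose to represent any equivariantly exact perturbation as $d_\g\sigma$ with $\inc_F^*\sigma\equiv 0$ and then quote Lemma \ref{Kirwanexact}; but this normalization is impossible in general: if $\varrho'-\varrho=d_\g\gamma$ with $d(\inc_F^*\gamma)\neq 0$, then any primitive $\sigma$ of the same exact form with $\inc_F^*\sigma\equiv 0$ would force $\inc_F^*(d_\g\sigma)=0$, a contradiction. Lemma \ref{eqHom} only supplies such a primitive for the specific difference produced in the proof of Lemma \ref{assumption}, and that is how the paper uses Lemma \ref{Kirwanexact} (to extend \eqref{Wfinal} from representatives satisfying Assumption \ref{ass:rho} to general $\varrho$). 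The invariance of the regular Kirwan summand itself is then obtained indirectly: the left-hand side and all other summands depend only on $[\varrho]$ (Lemma \ref{assumption} and Lemma \ref{intXi=int} applied to the $S^1$-actions on $S_F^\pm$), so the first summand is class-invariant up to $\O(m^{-\infty})$, and being a polynomial in $m$ the discrepancy vanishes identically. Your Stokes arguments for the remaining summands are essentially the paper's Lemma \ref{intXi=int} and are fine in spirit.
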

 \begin{proof}First, we prove \eqref{Wfinal} under Assumption \ref{ass:rho}. 
Having treated in the previous subsections all terms in \eqref{eq:25.5.2019} except  $\<\W^{\textup{reg}}_m(\varrho),\phi\>$, we are left with studying the latter integral. To this end, we use the $S^1$-equivariant
diffeomorphism $\Phi_W:W\xrightarrow{\sim}\J^{-1}(\{0\})^\mathrm{reg}\times(-\delta,\delta)$
of Lemma \ref{lem:globalretraction}, which provides the coordinate
$q:=\J(p)$ on $W$. This coordinate coincides for all $F\in\F_{\textup{indef}}^0$ with the coordinate
$q$ of \eqref{eq:rscoord} via the diffeomorphism \eqref{eq:Psiindef}
when restricted to $U_F\cap W$. Now, following \cite[Proof of Theorem\ 3.3]{meinrenken96}, and analogously to Equation \eqref{eq:01.07.2022} one has
\begin{equation}
\omega|_W=\mathrm{ret}_{0}^*\mathrm{inc}^*_{0}\omega+
d(q\,\mathrm{ret}_{0}^*\Theta)+d_\g\tilde \beta,\label{eq:23589}
\end{equation}
where $\textup{inc}_0:\J^{-1}(\{0\})^{\mathrm{reg}}\to M$ is the inclusion, $\Theta\in\Omega^1(\J^{-1}(\{0\})^{\mathrm{reg}},\R)$ is any connection for the $S^1$-action on
$\J^{-1}(\{0\})^{\mathrm{reg}}$ in the sense of \eqref{connfla}, and
$\tilde\beta\in\Omega^*(W)^{S^1}_{\textup{bas}}$ is a basic form in the sense of
\eqref{basdef} satisfying $\inc_{0}^*\tilde\beta\equiv 0$. Furthermore, thanks to \eqref{retVF} and comparing with
\eqref{eq:01.07.2022},
we can choose the forms in \eqref{eq:23589} in such a way that for all $F\in\F_{\textup{indef}}^0$ 
we have 
\bqn
\Phi_F^*(\Theta|_{U_F\cap \J^{-1}(\{0\})^{\mathrm{reg} }})=(\Psi_F^{-1})^*\pr_{S_F}^*\Theta_{S_F} \quad\text{and}\quad  \Phi_F^*(\tilde \beta|_{U_F\cap W })=(\Psi_F^{-1})^*\bar\beta_F\,,
 \eqn
 with $\Phi_F$ suitably restricted. In the same way as in  \eqref{emdb-1}, let us write
\begin{equation*}\label{emdb-1bis}
e^{md_\g\tilde \beta  } -1 =  d_\g \tilde \beta_{m} \,,
\end{equation*}
with $\tilde \beta_{m}\in\Omega^*(W)^{S^1}_{\textup{bas}}$ satisfying
$\Phi_F^*(\tilde \beta_m|_{ U_F\cap W})=(\Psi_F^{-1})^*\bar \beta_{m,F}$ for all $F\in\F_{\textup{indef}}^0$. In particular, recalling the definitions
of the forms appearing in  \eqref{R2Fmfinal},
we have that $\tilde \beta=\tilde\beta_F$ and $\tilde \beta_m=\tilde\beta_{m,F}$ on $U_F\cap W$.
Following \cite[Proof of Theorem\ 3.3]{meinrenken96},
which boils down to carrying over the computations from Section
\ref{sec:indefinitefixedpoints}
to the present situation, and using Assumption
\ref{ass:rho}, we then readily obtain
 \begin{align}
\nonumber \<\W^{\textup{reg}}_m(\varrho),\phi\>&=\int_{\J^{-1}(\{0\})^{\mathrm{reg}}}
 \left(1-\sum_{F\in \F^0}\chi_F\right)e^{m \, \textup{inc}_{0}^*\omega}
 \textup{inc}^*_{0}\varrho\Big(\frac{i}{2\pi} d \Theta\Big)\wedge\Theta\\
 \nonumber &\qquad\qquad +\sum_{F\in\F^0}\int_\R\int_{M}(d\chi_F)\, e^{2\pi i m x\J} e^{m\omega}e^{-d_\g\tilde\beta}
\tilde \beta_{m}\varrho(x)\phi(x)dx+\O(m^{-\infty})\\
&=\int_{\J^{-1}(\{0\})^{\mathrm{reg}}}
 \left(1-\sum_{F\in \F^0}\chi_F\right)e^{m \, \textup{inc}_{0}^*\omega}
 \textup{inc}^*_{0}\varrho\Big(\frac{i}{2\pi} d \Theta\Big)\wedge\Theta
 +R_2^F(m)+\O(m^{-\infty})\label{Wtopasy}
 \end{align}
  as  $m\to+\infty$,  with $R_2^F(m)$ as in \eqref{R2Fmfinal}. Note also that the sums on the right-hand side of \eqref{Wtopasy} can
be restricted to $\F_{\textup{indef}}^0$, since for definite $F$
one has $\chi_F=1$ on $\J^{-1}(\{0\})\cap \supp\chi_F=F$.   Therefore, when inserting  \eqref{Wtopasy} into the original expression \eqref{eq:25.5.2019} for $\<\W_m(\varrho),\phi\>$ while taking into account \eqref{I1I2I3I4}, the first term on the right-hand side of \eqref{Wtopasy} combines with the integrals $I_1^F(m)$ computed in \eqref{eq:prelimexpindefI} in such a way that all terms involving the cutoff functions $\chi_F$ disappear. Similarly, the remainder term $R_2^F(m)$ appears in \eqref{Wtopasy} and in \eqref{I2epsfinal} with opposite signs and thus gets cancelled out,  
 which leaves only the term \eqref{I2epsfinalpre}.  Summing up,  we get 
 \begin{align}
 \nonumber \<\W_m(\varrho),\phi\>&=\<\W^{\textup{reg}}_m(\varrho),\phi\>+\sum_{F\in \mathcal{F}^0_\mathrm{def}}
\<\W^{F}_m(\varrho),\phi\>+\sum_{F\in \mathcal{F}^0_\mathrm{indef}} \big ( I_1^{F}(m)+
 I_2^{F}(m)\big ) +\O(m^{-\infty})\\
 &=\int_{\J^{-1}(\{0\})^\mathrm{reg}}e^{m \, \textup{inc}_0^*\omega}
 \textup{inc}_0^*\varrho\Big(\frac{i}{2\pi} d \Theta\Big)\,\wedge\,\Theta
 +\sum_{F\in \mathcal{F}^0_\mathrm{def}}
\<\W^{F}_m(\varrho),\phi\>
 +\sum_{F\in \mathcal{F}^0_\mathrm{indef}}\tilde I_2^{F}(m)+\O(m^{-\infty})\label{Wtopfinal}
 \end{align}
as $m\to+\infty$. If we now insert the expression \eqref{W0mdefcomputfinal} for the integrals $\<\W^{F}_m(\varrho),\phi\>$ with definite $F$ and the expression \eqref{I2epsfinalpre} for the integrals $\tilde I_2^F(m)$
and apply Lemma \ref{intXi=int} to the regular Kirwan map  \eqref{Kirwantilde}, we get the asymptotics \eqref{Wfinal}
for any equivariantly closed $\varrho \in \Omega_{S^1}^\ast(M)$
satisfying Assumption \ref{ass:rho}.

Let us now deal with the case of a general
equivariantly closed
form $\varrho \in \Omega_{S^1}^\ast(M)$ which not necessarily
satisfies Assumption \ref{ass:rho}.
First, note that  thanks to Lemma \ref{assumption}  the left-hand
side of \eqref{Wfinal} only depends on the equivariant cohomology
class of $\varrho$.
On the other hand, by the same lemma
one can write $\varrho=\tilde\varrho+d_\g\beta$ with
$\tilde\varrho \in \Omega_{S^1}^\ast(M)$ satisfying Assumption \ref{ass:rho}, so that we can apply \eqref{Wfinal} to $ \<\W_m(\varrho),\phi\>= \<\W_m(\tilde \varrho),\phi\>$. Since the equivariant homotopy Lemma \ref{eqHom} implies that we can choose $\beta$ such that  $\beta_F:=\inc_F^*\beta\equiv 0$, it follows from Lemma \ref{Kirwanexact}  that the first term on the right
hand side of  \eqref{Wfinal} remains unchanged if we replace $ \varrho$ by $\tilde \varrho$; as $ \varrho_F=\tilde\varrho_F$, all other terms on the right
hand side are also unchanged, and we obtain  \eqref{Wfinal} for $\varrho$.

Finally note that 
Lemma \ref{intXi=int}, 
applied to the $S^1$-actions
on $S_F^{\pm}$ in \eqref{S1SFS1} with connections
$\Theta^{\pm}_{S_F}$, respectively, 
implies that all terms on the right-hand side  of  \eqref{Wfinal}, except maybe the first one, 
depend only on the equivariant cohomology
class of $\varrho$. Since this is also true for the left-hand side
by Lemma \ref{assumption}, this implies that
the first term of the right-hand side only depends on the equivariant cohomology
class of $\varrho$ up to an error of $\O(m^{-\infty})$ as $m\to+\infty$.
But as this term is polynomial in $m\in\N$, this error actually vanishes
identically in $m\in\N$. This concludes the proof of the theorem. 
 \end{proof}

 \begin{rem}\label{rem:DH2}
 Recalling the definition of $S_F$ from \eqref{SFdef}, we see   that 
 $S_F=S_F^+\times_F S_F^-$ as a fiberwise product, where we set
 $S_F^{\pm}:=P_F\times_{K_F}S_\pm^{2l^{\pm}+1}$. On the other hand,
 the $1$-forms $\Theta_{S_F}^\pm$ are connections for the
 $S^1$-actions on $S_F^{\pm}$, respectively. Thus,  adapting the proof of Duistermaat and Heckman in
\cite[(2.11)-(2.31)]{duistermaat-heckman83}, and 
using the multiplicativity of the Euler class
one gets from \eqref{Wtopfinal} that as $m\to+\infty$ one has
\begin{multline}\label{W0defrmk}
\<\W_m(\varrho),\phi\>=\int_{\M_0^{\mathrm{reg}}}e^{m\omega_0}
 \kappa(\varrho)+
\sum_{F\in \mathcal{F}^0_\mathrm{indef}}\int_{S_F}   e^{ m \nu_{S_F}^\ast \omega_{F}}
\kappa_F(\varrho_{F})\\
+\sum_{F\in \mathcal{F}^0_\mathrm{def},  F\in \mathcal{F}^0_\pm} \eklm{ \int_{F}\frac{e^{ m \omega_F}
\varrho_{F}(\underline{x}_\pm)}{e_F(\underline{x}_\pm)}
,  \phi}
+\sum_{F\in \mathcal{F}^0_{\mathrm{indef}}} \eklm{ \int_{F}\frac{e^{ m \omega_F}
\varrho_{F}(\underline x)}{e_F(\underline x)}
,  \phi}+\O(m^{-\infty})\,,
\end{multline}
where $e_F(x)\in\Omega^*_{S^1}(F)$ is the equivariant Euler class
of the normal bundle $\nu_{\Sigma_F}:\Sigma_F\to F$. 
Note again that the integrands in the last terms coincide with the integrand
appearing in Berline-Vergne localization
formula \cite{berline-vergne82}. As already pointed out
in Remark \ref{rem:DH1}, 
we will not use this fact for the computation of
the Riemann-Roch numbers in the next section, but instead provide a direct proof based on \eqref{Wfinal} and the Kirillov formula
of Theorem \ref{Kirfla}.
\end{rem}
 
  \section{Invariant Riemann-Roch formula}
\label{sec:6}

In this section, we will proceed to the proof of our main result Theorem \ref{mainth}. 

\subsection{Asymptotics of Riemann-Roch numbers}\label{sec:asyRR}
In what follows, we will  relate the asymptotics as $m\to+\infty$ of the
$G$-invariant Riemann-Roch numbers of Definition \ref{RRGeqdef} for $G=S^1$
with the asymptotics of the Witten integral \eqref{eq:mainint}.
As  in Theorem \ref{mainth}, we assume  that the action of $S^1$ is free on
$\J^{-1}(\{0\})\setminus M^{S^1}$. Recall the 
identification \eqref{identfla} sending $X\in\g$ to $x\in\R$ and let $V_0$ be a small neighbourhood of $0\in \g$ such that  $\exp:V_0\rightarrow U_e:=\exp (V_0)\subset S^1$ is a diffeomorphism. 
Take $\psi\in\Cinft(S^1,\R)$ to be identically  $1$ around $e\in S^1$ and
compactly supported  in $U_e$.  We then have the following 

\begin{prop}\label{RRasyprop}
 Under the assumptions of Theorem \ref{mainth}  we have 
\bq\label{RRasyfla3}
\text{\emph{RR}}^{S^1}(M,L^m)=\langle\W_{m}(\varrho),\phi\rangle
+\sum_{F\in \F^0}\int_{S^1}(1-\psi(g))\int_{F}\,
\frac{\Td(F)\,e^{m\omega_F}}
{\det_{\Sigma_F}(1-g\exp(R^{\Sigma_F}/2\pi i))}\,dg + \O(m^{-\infty})
\eq
as $m\to+\infty$, where  $\langle\W_{m}(\varrho),\phi\rangle$ is the Witten integral \eqref{eq:mainint} evaluated on $\varrho(x):=\Td_\g(M,X)\in\Omega^*_{S^1}(M)$ and $\phi(x):=\psi(\exp(X))\in \CT(V_0,\R)$. 
\end{prop}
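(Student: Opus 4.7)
The plan is to start from the Haar integral expression $\text{RR}^{S^1}(M,L^m)=\int_{S^1}\chi^{(m)}(g)\,dg$ provided by \eqref{RRintG}, split it via $1=\psi(g)+(1-\psi(g))$, and apply two complementary index theorems to the two pieces.

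For the $\psi$-part, whose integrand is supported in $U_e=\exp(V_0)$ where Kirillov's formula applies, Theorem~\ref{Kirfla} together with \eqref{ChernLm} gives
$$\chi^{(m)}(\exp X)=\int_M \Td_\g(M,X)\,\ch_\g(L^m,X)=\int_M e^{2\pi i mx\J}e^{m\omega}\varrho(x)\,,$$
with $\varrho(x):=\Td_\g(M,X)$. Under the identification \eqref{identfla}, $\exp:V_0\to U_e$ is a diffeomorphism that pulls the normalized Haar measure $dg$ back to $dx$, so setting $\phi(x):=\psi(\exp X)$ this piece becomes exactly the Witten integral $\langle\W_m(\varrho),\phi\rangle$ of \eqref{eq:mainint}. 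For the $(1-\psi)$-part, I would apply the Atiyah-Segal-Singer formula (Theorem~\ref{eqindfla}). For $g$ outside a finite subset of $S^1$ where $M^g\supsetneq M^{S^1}$ one has $M^g=M^{S^1}=\bigsqcup_{F\in\F}F$; combined with the prequantization identity $-R^L/2\pi i=\omega$ from \eqref{preq} and the observation that the Kostant formula \eqref{Kostant} (using $\widetilde X|_F=0$) shows the $S^1$-action on $L|_F$ is via the character $g\mapsto g^{-\J(F)}$, so $\Tr_{L^m}[g^{-1}]|_F=g^{m\J(F)}$, the formula reads
$$\chi^{(m)}(g)=\sum_{F\in\F}\int_F\frac{\Td(F)\,g^{m\J(F)}\,e^{m\omega_F}}{\det_{\Sigma_F}(1-g\exp(R^{\Sigma_F}/2\pi i))}\,.$$
For $F\in\F^0$ the factor $g^{m\J(F)}=1$, and these summands reproduce exactly the second term on the right-hand side of \eqref{RRasyfla3}.

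The hard part is showing that the remaining contributions from $F\in\F\setminus\F^0$ add up to $O(m^{-\infty})$. Each such term carries the rapidly oscillating character $g^{m\J(F)}$ with $\J(F)\in\Z\setminus\{0\}$, multiplied by a rational function of $g$ whose only poles lie on $S^1$ at roots of unity dictated by the weights in $W(F)$. Since $(1-\psi)$ need not vanish at these poles, one cannot directly invoke smooth Fourier coefficient decay. My plan is to extend the integrand meromorphically to a neighborhood of $S^1$ in $\C^*$ and deform the contour $|g|=1$ to $|g|=\rho$, with the sign of $\log\rho$ chosen opposite to $\J(F)$ so that $|g^{m\J(F)}|$ decays exponentially in $m$ on the deformed contour, producing an $O(e^{-cm})$ boundary term. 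The residues picked up when crossing poles on the unit circle must then either cancel in the sum over $F\in\F\setminus\F^0$ (leveraging the smoothness of $\chi^{(m)}$ and whichever regularization is implicitly built into the $\F^0$ integrals appearing in \eqref{RRasyfla3}) or contribute only $O(m^{-\infty})$ on their own. Controlling this residue sum is the main technical obstacle; it is precisely the step where Meinrenken's strategy in \cite{meinrenken96} must be carried out carefully in the present $S^1$-setting.
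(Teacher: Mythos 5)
Your overall route coincides with the paper's: split $\int_{S^1}\chi^{(m)}(g)\,dg$ from \eqref{RRintG} by writing $1=\psi+(1-\psi)$, convert the near-identity piece into the Witten integral via the Kirillov formula (Theorem \ref{Kirfla}) together with \eqref{ChernLm} and the identification of Haar measure with $dx$, and apply the Atiyah--Segal--Singer theorem (Theorem \ref{eqindfla}) with $M^g=M^{S^1}$ for generic $g$ to the remaining piece; your computation of the character $\Tr_{L^m}[g^{-1}]|_F=g^{m\J(F)}$ from the Kostant formula and your identification of the $F\in\F^0$ summands with the second term of \eqref{RRasyfla3} are exactly what the paper does.

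The gap is the step you yourself flag as open: you never establish that the contributions of the components $F$ with $\J(F)\neq 0$ are $\O(m^{-\infty})$; you only sketch a contour deformation and concede that controlling the residues crossed on the unit circle is ``the main technical obstacle''. Since this is the only step that is not a formal rearrangement, the proof is incomplete precisely there. The paper's argument is more elementary: writing $g=e^{2\pi ix}$, each such term carries the oscillatory factor $e^{2\pi im\J(F)x}$ with $\J(F)$ a fixed nonzero constant, and repeated partial integration in $x$ (non-stationary phase) against this factor kills the term to all orders in $m$, as in \eqref{RRasyfla2}; no contour shift and hence no residue bookkeeping is required. Your concern about poles of the fixed-point integrand at roots of unity $g^{k}=1$, $k\in W$, lying in $\supp(1-\psi)$ is a real subtlety, but it concerns how the individual $F$-integrals are to be interpreted (in your formula just as in the paper's; only the full sum over $F\in\F$, which agrees with the smooth character $\chi^{(m)}$ away from finitely many points, is unambiguously integrable), and the decay of the $\J(F)\neq 0$ terms does not hinge on any delicate cancellation of such residues. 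To complete your argument, either perform the integration by parts in $x$ as in the paper, or carry out your contour shift only after grouping the components with a common nonzero value of $\J(F)$ and isolating the pole structure, rather than leaving the residue analysis unresolved.
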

\begin{proof}
Applying Theorems \ref{Kirfla} and \ref{eqindfla} to  \eqref{RRintG} we get with \eqref{ChernLm}
\begin{align}
\begin{split}
 \text{RR}^{S^1}(M,L^m)&=\int_{S^1}\,\chi^{(m)}(g)\,dg=\int_{U_e}\,\chi^{(m)}(g)\psi(g)\,dg+
\int_{S^1}\,\chi^{(m)}(g)(1-\psi(g))\,dg\\
 &=\underbrace{\int_{V_0}\,\int_M\,\Td_\g(M,X)\,\ch_\g(L^m,X)\,\psi(\exp(X))\,dX}_{=\<\W_m(\varrho),\phi\>}\\
 & +\int_{S^1}\,\int_{M^{S^1}}\,
\frac{\Td(M^{S^1})\,\Tr_{L^m}[g^{-1}]\exp(-m R^{L}/2\pi i)}
{\det_{\Sigma_{M^{S^1}}}(1-g\exp(R^{\Sigma_{M^{S^1}}}/2\pi i))}(1-\psi(g))\,dg \label{RRasyfla1}.
\end{split}
\end{align}
To identify the second term of the right-hand side of 
\eqref{RRasyfla1} with the second term of the right-hand side of  \eqref{RRasyfla3}, first note that by the 
Kostant formula \eqref{Kostant} and the  
identification \eqref{identfla} we have on $M^{S^1}$
\begin{equation*}
\Tr_{L^m}[\exp(-X)]=e^{2\pi imx\J}\,.
\end{equation*}
In addition, for each $F\in \F$ recall the isotypic decomposition \eqref{eq:splitting111k} of its normal bundle
$\nu_{\Sigma_F}:\Sigma_F\to F$ with respect to the induced ${S^1}$-action,
and write $R^{\Sigma_F}=\sum_{k\in W} R^{\Sigma_F^{(k)}}$ for the splitting
of the curvature of the connection $\nabla^{\Sigma_F}$
in this decomposition. Recall from Notation \ref{def:F} that $\J:M\to\R$ takes a constant value
on $F$ denoted by $\J(F)\in\R$. 
Then,  partial integration with respect to $x$ yields with \eqref{preq} as $m\to+\infty$ 
\begin{equation}\label{RRasyfla2}
\begin{split}
&\int_{S^1 }\,\int_{M^{S^1}}\,
\frac{\Td(M^{S^1})\,\Tr_{L^m}[g^{-1}]\exp(-m R^{L}/2\pi i)}
{\det_{\Sigma_{M^{S^1}}}(1-g\exp(R^{\Sigma_{M^{S^1}}}/2\pi i))}(1-\psi(g))\,dg\\
&=\sum_{F\in \F}\int_{-1/2}^{1/2} e^{2\pi im \J(F) x}
(1-\phi(x)) \int_{F}\,
\frac{\Td(F)\,e^{m \omega_F}}
{\prod_{k\in W}\det_{\Sigma_F^{(k)}}(1-e^{2\pi i kx}\exp(R^{\Sigma_F^{(k)}}/2\pi i))}\,dx\\
&=\sum_{F\in \F^0}\int_{-1/2}^{1/2}(1-\phi(x))\int_{F}\,
\frac{\Td(F)\,e^{m\omega_F}}
{\prod_{k\in W}\det_{\Sigma_F^{(k)}}(1-e^{2\pi i kx}\exp(R^{\Sigma_F^{(k)}}/2\pi i))}\,dx+\O(m^{-\infty})\\
&=\sum_{F\in \F^0}
\int_{S^1}(1-\psi(g))\int_{F}\,
\frac{\Td(F)\,e^{m\omega_F}}
{\det_{\Sigma_F}(1-g\exp(R^{\Sigma_F}/2\pi i))}\,dg+\O(m^{-\infty})\,.
\end{split}
\end{equation}
Bringing the computations \eqref{RRasyfla1} and \eqref{RRasyfla2} together
gives the result.
\end{proof}

\subsection{Proof of Theorem \ref{mainth}}
We are now ready to  prove Theorem \ref{mainth} combining Proposition \ref{RRasyprop} with
the full asymptotic expansion of the Witten integral of Theorem \ref{thm:wittenasymp}. Let $\varrho$ and $\phi$ be as in the previous proposition, and take $\tilde\psi\in\Cinft({S^1},\R)$ with compact support in the small neighborhood $U_e$ of $e\in {S^1}$, but such that $e\notin\text{supp}\,\tilde\psi$, and write $\tilde \phi(x):=\tilde \psi(\exp(X))\in \CT(V_0,\R)$ 
for the induced function. Since $\psi + \tilde \psi\in \CT(U_e,\R)$ is identically $1$ close to $e$, Formula \eqref{RRasyfla3} also holds with the cut-off function $\psi$ replaced by $\psi+ \tilde \psi$. Therefore, taking the difference of the two resulting formulas, whose left-hand sides do not depend on the cut-off functions, Formula  \eqref{Wfinal} applied to $\phi+\tilde\phi$ and $\tilde\phi$ implies 
\begin{multline}\label{Kirilapplinew}
\sum_{F\in \F^0}\int_{S^1} \tilde\psi(g)\int_{F}\,
\frac{\Td(F)\,e^{m\omega_F}}
{\det_{\Sigma_F}(1-g\exp(R^{\Sigma_F}/2\pi i))}\,dg\\
=\sum_{F\in \mathcal{F}^0_\mathrm{def}}\int_\R \int_{S_{F}}\,e^{m\nu_{S_F}^\ast\omega_{F}}\nu_{S_F}^\ast
\varrho_{F}(x)
\frac{\frac{i}{2\pi}\Theta_{S_F}}{x-\frac{i}{2\pi}d\Theta_{S_F}} \tilde \phi(x) \, d x \\
 +\sum_{F\in \mathcal{F}^0_\mathrm{indef}} \int_\R \int_{S_F}e^{ m \nu_{S_F}^\ast\omega_{F}}\nu_{S_F}^\ast\varrho_{F}(x)
\frac{\frac{i}{2\pi}\Theta_{S_F }^+}{x-\frac{i}{2\pi}d\Theta_{S_F }^+}
\frac{\frac{i}{2\pi}\Theta_{S_F }^-}{x-\frac{i}{2\pi}d\Theta_{S_F }^-}
  \tilde \phi(x)  \, d x +\O(m^{-\infty})
\end{multline}
as $m\to+\infty$, where we took into account that the singular support of the distributions appearing in \eqref{Wfinal}
is given by $\{0\}\subset\R$, so that they turn into regular integrals over $\R$ as $0\notin\supp\tilde\phi$ by assumption.
Now, as  \eqref{Kirilapplinew} holds for all test functions 
$\tilde\psi\in\CT({U_e\setminus\{e\}},\R)$, one deduces from this for $x\neq 0$ the  identity of Laurent polynomials 
\begin{multline*}
\sum_{F\in \F^0}\int_{F}\frac{\Td(F)\,e^{m\omega_F}}
{\prod_{k\in W}\det_{\Sigma_F^{(k)}}(1-e^{2\pi i kx}\exp(R^{\Sigma_F^{(k)}}/2\pi i))}
=\sum_{F\in \mathcal{F}^0_\mathrm{def}} \int_{S_{F}}\,e^{m\nu_{S_F}^\ast\omega_{F}}\nu_{S_F}^\ast
\varrho_{F}(x)
\frac{\frac{i}{2\pi}\Theta_{S_F}}{x-\frac{i}{2\pi}d\Theta_{S_F}}  \\
 +\sum_{F\in \mathcal{F}^0_\mathrm{indef}}  \int_{S_F}e^{ m \nu_{S_F}^\ast\omega_{F}}\nu_{S_F}^\ast\varrho_{F}(x)
\frac{\frac{i}{2\pi}\Theta_{S_F }^+}{x-\frac{i}{2\pi}d\Theta_{S_F }^+}
\frac{\frac{i}{2\pi}\Theta_{S_F }^-}{x-\frac{i}{2\pi}d\Theta_{S_F }^-} ,
\end{multline*}
since both sides are polynomials in $m\in\N$, so that the error
term $\O(m^{-\infty})$ vanishes. Comparing this with the full asymptotic expansion of the Witten
integral  \eqref{Wfinal} and using the notation introduced in \eqref{Laurentpvdef}, this implies
\begin{multline*}
\<\W_m(\varrho),\phi\>=\int_{\M_0^{\mathrm{reg}}}e^{m\omega_0}
 \kappa(\varrho) +\sum_{F\in \mathcal{F}^0_\mathrm{def},  F\in \mathcal{F}^0_\pm}
\bigg\langle\int_{F}\frac{\Td(F)\,e^{m\omega_F}}
{\prod_{k\in W}\det_{\Sigma_F^{(k)}}
(1-e^{2\pi i k\underline x_\pm}\exp(R^{\Sigma_F^{(k)}}/2\pi i))},\phi\bigg\rangle\\
+\sum_{F\in \F^0_{\mathrm{indef}}} \int_{S_F}   e^{ m\nu_{S_F}^\ast \omega_{F}}
\kappa_F(\varrho_{F}) +
\bigg\langle\int_{F}\frac{\Td(F)\,e^{m\omega_F}}
{\prod_{k\in W}\det_{\Sigma_F^{(k)}}
(1-e^{2\pi i k\underline x}\exp(R^{\Sigma_F^{(k)}}/2\pi i))},\phi\bigg\rangle+\O(m^{-\infty})\,.
\end{multline*}
Plugging this into Proposition \ref{RRasyprop} and expressing the distributions
\eqref{pvdef} and \eqref{pvdef2} as distributions on  the Lie group ${S^1}$ we get
\begin{multline}\label{RRS1final}
\text{RR}^{S^1}(M,L^m)=\int_{\M_0^{\mathrm{reg}}}e^{m\omega_0}
 \kappa(\varrho)\\
+\sum_{F\in\F_+^0}\lim_{\epsilon\to 0^+}\int_0^{1}\,\widetilde{\chi}^{(m)}_F(e^{2\pi ix-\epsilon})\,dx+\sum_{F\in\F_-}\lim_{\epsilon\to 0^+}\int_0^{1}\,
\widetilde{\chi}^{(m)}_F(e^{2\pi ix+\epsilon})\,dx\\
+\sum_{F\in\F^0_{\mathrm{indef}}}\int_{S_F}   e^{ m\nu_{S_F}^\ast \omega_{F}}
\kappa_F(\varrho_{F})+\frac{1}{2}  \left(\lim_{\epsilon\to 0^+}\int_0^{1}\,\widetilde{\chi}^{(m)}_F(e^{2\pi ix-\epsilon})\,dx+\lim_{\epsilon\to 0^+}\int_0^{1}\,\widetilde{\chi}^{(m)}(e^{2\pi ix+\epsilon})\,dx\right)+\O(m^{-\infty})\,,
\end{multline}
where $\widetilde{\chi}^{(m)}_F$ is the meromorphic function
defined for any $F\in\F_0$ and $z\in\C$ by
\begin{equation*}\label{xitildedef}
\widetilde{\chi}^{(m)}_F(z):=\int_{F}\,
\frac{\Td(F)\,e^{m\omega}}
{\prod_{k\in W}\det_{\Sigma_F^{(k)}}(1-z^k\exp(R^{\Sigma_F^{(k)}}/2\pi i))}\,.\\
\end{equation*}
Notice that the dependence on the test function $\psi$ has cancelled out in \eqref{RRS1final}. Now, by a result of Meinrenken in
\cite[Theorem 5.1]{meinrenken96},
the left-hand side of \eqref{RRS1final} is an arithmetic poynomial in $m\in\N$,
while the terms of the right-hand side except $\O(m^{-\infty})$
are polynomials in $m\in\N$, 
so that the error term in \eqref{RRS1final} actually vanishes identically in $m\in\N$. 

Note that, as seen for instance from \cite[(2.5)]{DGMW95}, 
the poles of $\widetilde{\chi}^{(m)}_F(z)$ are contained in $\{0,1\}\subset\C$. Therefore, with a change of coordinates  the residue theorem implies  that 
\begin{equation*}
\begin{split}
\lim_{\epsilon\to 0^+}\int_0^{1}\,\widetilde{\chi}^{(m)}_F(e^{2\pi ix-\epsilon})\,dx&=\frac{1}{2\pi i}\lim_{\epsilon\to 0^+}\int_{\{|z|=e^{-\epsilon}\}}
\widetilde{\chi}^{(m)}_F(z)\,\frac{dz}{z} =\underset{z=0}{\mathrm{Res}}\,\frac{\widetilde{\chi}^{(m)}_F(z)}{z}\,.
\end{split}
\end{equation*}
Using a change of variable $z\mapsto z^{-1}$ we get in the same way
\begin{equation}\label{Resinfty}
\lim_{\epsilon\to 0^+}\int_0^{1}\,\widetilde{\chi}^{(m)}_F(e^{2\pi ix+\epsilon})\,dx=\underset{z=0}{\mathrm{Res}}\,\frac{ \widetilde{\chi}^{(m)}_F(z^{-1})}{z}\,.
\end{equation}
Inserting this in \eqref{RRS1final} and taking $m=1$ then finally yields \eqref{mainthfla}. 
 \qed

\begin{rem}\label{rem:DH3}
Note that Formula \eqref{Kirilapplinew} can also be obtained from the formula
for the Witten integral given in Remark \ref{rem:DH2} by using the
explicit Formula \eqref{Toddeq} for the equivariant Todd class
and the explicit formula for the equivariant Euler class following
for instance \cite[\S\,8.1]{berline-getzler-vergne}.
This fact is actually at the basis of the deduction of the Kirillov formula
of Theorem \eqref{Kirfla} from the equivariant index theorem of
Theorem \ref{eqindfla}. In the definite setting of Section \ref{sec:defcase},
our method actually
gives an alternative proof of this fact, using the theory of
distributions instead of the standard methods of equivariant cohomology.
\end{rem}

\subsection{Examples}
\label{sec:ex}
Let us close by illustrating Theorem \ref{mainth} through a family of examples. Fix an $m$-tuple $(k_1,\cdots,  k_m)\in\Z^m$ with $m\in\N$. We
consider the product $M=\Pi_{j=1}^m S^2$ of $m$
$2$-spheres $S^2$ endowed with the symplectic form $\omega$  whose pullback to each $S^2$-factor is the standard volume form of volume $1$. We regard $M$ as  equipped with the diagonal $S^1$-action such that
$\phi\in\R/\Z\simeq S^1$ acts on the $j$-th sphere by  a rotation of angle
$2\pi k_j\phi$ around the $z$-axis of $S^2\subset\R^3$. This action is
Hamiltonian for the moment map
\begin{equation}
\J:=\sum_{j=1}^m\pi_j^*\J_{k_j}-\sum_{j=1}^m k_j\,,\label{eq:Jprodspheres}
\end{equation}
where for any $k\in\Z$, the map $\J_k:S^2\to\R$ denotes the moment map
associated with the $S^1$-action of weight $k$, which is explicitly given by $\J_k(x,y,z)=kz$ in the Euclidean coordinates $x,y,z$ of $\R^3\supset S^2$. The connected components of the fixed point set $M^{S^1}$ are all isolated
points, given by products of north and south poles, and the constant term subtracted in \eqref{eq:Jprodspheres} ensures that at least the product of all north poles (where $z=1$) is contained in $\J^{-1}(\{0\})$, with weights counted with multiplicity
given by $(k_1,\ldots, k_m)\in\Z^m$.
This shows in particular that any given
set of weights $W\subset\Z$ with any multiplicities can occur.
Theorem \ref{mainth} then
provides an explictly computable formula for the $S^1$-invariant
Riemann-Roch numbers $RR(M,L)^{S^1}$, and under the combinatorial condition \eqref{weightcondext} the regular term can be expressed in terms of characteristic classes involving only the topology of the resolution $\widetilde\M_0$.

In the particular case of $M=S^2\times S^2$ with weights $k_1=-1$ and $k_2=1$,
the partial resolution $\widetilde\M_0$ is diffeomorphic to $S^2$,
and the quotient map $\widetilde Z\to\widetilde \M_0$
is a trivial $S^1$-principal bundle. One then checks
that we recover the usual Riemann-Roch
formula for the sphere $S^2$ as the right-hand side of \eqref{mainthfla},
and that the second and third terms of the right-hand side of Formula
\eqref{mainthfla} vanish, so that Formula \eqref{mainthfla} reads
$RR(S^2\times S^2,L^m\boxtimes L^m)^{S^1}=RR(S^2,L^m)$ for all $m\in\N$
and $L$ the prequantizing line bundle of $(S^2,\omega)$,
as one can easily compute explicitly.


\providecommand{\bysame}{\leavevmode\hbox to3em{\hrulefill}\thinspace}
\providecommand{\MR}{\relax\ifhmode\unskip\space\fi MR }
\providecommand{\MRhref}[2]{%
  \href{http://www.ams.org/mathscinet-getitem?mr=#1}{#2}
}
\providecommand{\href}[2]{#2}

\end{document}